\documentclass[11pt, twoside]{article}
\usepackage{latexsym}
\usepackage{amsmath}
\usepackage{amssymb}
\usepackage[all]{xy}
\usepackage{amsfonts}
\usepackage{verbatim}
\usepackage{amsthm}
\usepackage{indentfirst}
\usepackage{mathrsfs}\usepackage{array}
\usepackage{epsfig}
\usepackage{xy}
\usepackage{tensor}\usepackage{tocbibind}
\usepackage{array}
\usepackage{stmaryrd}
\usepackage{graphicx,color}
\usepackage{xcolor}
\usepackage[colorlinks=true,linkcolor=blue,citecolor=blue]{hyperref}
\usepackage{tikz}
\usetikzlibrary{arrows,calc}
\usepackage{etex}
\usepackage{mathdots}
\usepackage{float}
\usepackage{graphics}
\usepackage{pdflscape}
\usepackage{extarrows}
\usepackage{anysize,hyperref}
\input xypic
\xyoption{all}
\usepackage{bm}
\usepackage[perpage,symbol]{footmisc}
\usepackage{setspace}
\SelectTips{cm}{10}
\newcommand{\twoheaddownarrow}{\mathrel{\rotatebox[origin=c]{-90}{$\twoheadrightarrow$}}}

\def\dr{\ar@{->}[r]}

\begin{document}
\baselineskip=15pt
\title{\Large{\bf Heaps of modules: homological aspects
\footnotetext{$^\ast$Corresponding author.  ~Yongduo Wang is supported by the National Natural Science Foundation of China (Grant No. 12661005). Jian He is supported by the National Natural Science Foundation of China (Grant Nos. 12501048, 12661005) and the Hongliu Outstanding Young Talents Funding of Lanzhou University of Technology. ~Dejun Wu is supported by the National Natural Science Foundation of China (Grant Nos. 12261056, 12661005).} }}
\medskip
\author{Yongduo Wang,~Chenyu Wang,~Jian He$^\ast$ and Dejun Wu}
\date{}
\maketitle
\def\blue{\color{blue}}
\def\red{\color{red}}

\newtheorem{theorem}{Theorem}[section]
\newtheorem{lemma}[theorem]{Lemma}
\newtheorem{corollary}[theorem]{Corollary}
\newtheorem{proposition}[theorem]{Proposition}
\newtheorem{conjecture}{Conjecture}
\theoremstyle{definition}
\newtheorem{definition}[theorem]{Definition}
\newtheorem{question}[theorem]{Question}
\newtheorem{notation}[theorem]{Notation}
\newtheorem{remark}[theorem]{Remark}
\newtheorem{remark*}[]{Remark}
\newtheorem{example}[theorem]{Example}
\newtheorem{example*}[]{Example}

\newtheorem{construction}[theorem]{Construction}
\newtheorem{construction*}[]{Construction}

\newtheorem{assumption}[theorem]{Assumption}
\newtheorem{assumption*}[]{Assumption}

\baselineskip=17pt
\parindent=0.5cm

\begin{abstract}
\noindent \\[0.3cm]
Homological aspects of the theory of  heaps of modules are studied in this paper.
The definitions of projective objects and Gorenstein projective objects in the category of heaps of $T$-modules are posed, where $T$ is a truss. It is shown that a heap of $T$-modules $P$ is projective if and only if $\mathcal{G}_{e_p}(P)$ is a projective $R(T)$-module for all $e_p\in P$ and a heap of $T$-modules $M$ is BP Gorenstein projective if and only if $\mathcal{G}_{e_m}(M)$ is a Gorenstein projective $R(T)$-module for all $e_m\in M$. Moreover, we give a functorial description of the BP Gorenstein projective dimension. Finally, it is also proven that a unital truss $T$ is a PA Gorenstein truss if and only if $R(T)$ is an Iwanaga-Gorenstein ring.

\noindent \textbf{Keywords:} truss; projective objects; Gorenstein projective objects; heap of modules\\[0.1cm]
\textbf{2020 Mathematics Subject Classification:} 18G50; 18G20; 18G05; 18E10
\medskip
\end{abstract}
{\footnotesize\tableofcontents}
\medskip

\pagestyle{myheadings}
\markboth{\rightline {\scriptsize }}
         {\leftline{\scriptsize  }}

 \medskip

\pagestyle{myheadings}
\markboth{\rightline {\scriptsize Y. D. Wang, C. Y. Wang, J. He and D. J. Wu\hspace{2mm}}}
         {\leftline{\scriptsize   Heaps of modules: homological aspects}}

\section{Introduction}
The notion of heaps was introduced by H. Pr$\ddot{u}$fer \cite{hp}, R. Baer \cite{tb4} and A. K. Su$\check{s}$kevi$\check{c}$ \cite{aks} in the 1920s. A heap is a set $H$ together with a ternary operation
$[---]: H\times H\times H\rightarrow H$ which is associative and satisfies the Mal'cev identities, that is,
\begin{equation*}
[[a,b,c],d,e] = [a,b,[c,d,e]] \qquad \text{and} \qquad [a,b,b] = a = [b,b,a]
\end{equation*}
for all $a,b,c,d,e\in H$. It exhibits that there is a deep connection between groups and heaps.

In 2019, trusses were introduced by T. Brzezi$\acute{n}$ski in \cite{tb} as structures describing two different distributive laws: the well-known ring distributivity and the one coming from the recently introduced braces, which are gaining popularity due to their roles in the study of the set-theoretic solutions of the Yang-Baxter equation. The brace distributive law appeared earlier in the context of quasi-rings of radical rings (see \cite{ch}). It turns out that rings and braces can be described elegantly by switching the group structure to a heap structure. This leads to the definition of a truss, which is a set $T$ with a ternary operation $[-, -, -]$ and a binary multiplication $\cdot$ satisfying some conditions, the crucial one being the generalisation of ring and brace distributivity: $a\cdot [b, c, d] = [a\cdot b, a\cdot c, a\cdot d]$ and $[b, c, d]\cdot a = [b\cdot a, c\cdot a, d\cdot a]$, for all $a, b, c, d\in T$. Due to this, we can jointly approach brace and ring theory.

A truss can be understood as a ring in which the abelian group of addition has no specified neutral element. Also, every truss $T$ is a congruence class of a ring $R(T)$, the universal extension of $T$ into a ring (see \cite{tb1}). Trusses, even though close to rings, differ significantly as the category of trusses has no zero object. It is well known that, for a ring $R$, we must study modules over it, so it is natural to ask: what is the theory of modules over trusses? The notion of modules over trusses was posed and basic properties of it were given by T. Brzezi$\acute{n}$ski (see \cite{tb}). In recent years, modules over trusses were studied by S. Breaz, T. Brzezi$\acute{n}$ski, B. Rybolowicz, P. Saracco and other relevant researchers from different aspects (see \cite{stbp,mb,rtb,tb2,tbp,tb5,tb6}).

In order to study the affine version of the module $M$ over a truss $T$, the definition of heap of $T$-modules was posed by S. Breaz, T. Brzezi$\acute{n}$ski, B. Rybolowicz and P. Saracco in \cite{stbp,mb}. A heap of $T$-modules is an abelian heap $M$ together with a ternary operation $\rhd:T \times M \times M \rightarrow M$ such that, for each $m\in M$, the map $(t,n)\mapsto t\rhd_{m}n$ defines a left $T$-module structure and the base change property holds. The category $T\textbf{-HMod}$ of heaps of $T$-modules consists of such objects with heap homomorphisms that preserve $\rhd$. It was shown that heaps of modules over a truss
offer a promising algebraic setting for a frame-independent approach to mathematical notions that come
from various domains. For example, the heaps of $T$-modules approach to affine spaces allows one for a smooth formulation of Lie brackets on affine spaces or Lie affgebras (see
\cite{tb2,ggu}). In addition, particular solutions of the set theoretic Yang-Baxter equation coming from spindles and quandles can be parameterized by using heaps of modules. So the research of heap of $T$-modules is very meaningful.

It is well known that projective objects and Gorenstein projective objects play important roles in category theory and homological algebra. Here we will study projective objects and Gorenstein projective objects in the category of heaps of $T$-modules. The paper is arranged as follows. In Sectin 2, we provide some definitions and basic properties of heaps, trusses and modules over trusses that will be used in this work. In Section 3, the definition of projective objects in the category $T\textbf{-HMod}$ is introduced. And we prove the following results. It is worth nothing that these proofs differ drastically from those in classical module theory.

\begin{theorem}\label{main1} { (see Theorems \ref{th1} and \ref{th2} for details)  Let $P$ be a heap of T-modules. Then the following statements are equivalent.
}
\begin{itemize}
\item[ (1)] $P$ is projective;
\item[(2)] $\mathcal{G}_{e_p}(P)$ is a projective $R(T)$-module, for all $e_p\in P $;
\item[(2)] $P$ is a coproduct summand of a free heap of $T$-modules.
\end{itemize}
\end{theorem}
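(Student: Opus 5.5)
The plan is to route everything through the functor $\mathcal{G}_{e}$, which for a chosen base point $e\in M$ turns a heap of $T$-modules $M$ into the $R(T)$-module $\mathcal{G}_{e}(M)$. I will use the equivalence, which I take from the earlier sections, between pointed heaps of $T$-modules and (suitable) $R(T)$-modules. Under this equivalence, homomorphisms of heaps of $T$-modules $f\colon M\to N$ correspond bijectively to pairs consisting of an element $f(e)\in N$ and an $R(T)$-linear map $\mathcal{G}_e(M)\to\mathcal{G}_{f(e)}(N)$.

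The key observations are three.
\begin{enumerate}
\item[(a)] Surjections of heaps of $T$-modules are exactly the maps that are surjective on underlying sets. These are precisely the maps inducing surjective $R(T)$-linear maps after fixing compatible base points.
\item[(b)] For any $e_p,e_p'\in P$, translation gives an isomorphism $\mathcal{G}_{e_p}(P)\cong\mathcal{G}_{e_p'}(P)$. So "for all $e_p$" and "for some $e_p$" coincide.
\item[(c)] The free heap of $T$-modules on a set $X$, pointed at a generator $x_0$, is sent by $\mathcal{G}_{x_0}$ to a free $R(T)$-module on $X\setminus\{x_0\}$, up to the shape described earlier.
\end{enumerate}

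For (1)$\Rightarrow$(2), fix $e_p$. Given an $R(T)$-epimorphism $g\colon A\to B$ and a map $h\colon\mathcal{G}_{e_p}(P)\to B$, view $A$ and $B$ as heaps of $T$-modules via their module structures (the retract/affine construction). Then $h$ gives a heap map $P\to B$ sending $e_p\mapsto 0$. By projectivity this lifts to $\tilde h\colon P\to A$.

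A lift need not send $e_p$ to $0$. However, $\tilde h(e_p)\in g^{-1}(0)$, so I can correct it by composing with the translation $a\mapsto[a,\tilde h(e_p),0]$, which is a heap-of-modules automorphism of $A$ commuting with $g$ over $0$. The corrected map is pointed, and applying $\mathcal{G}$ yields an $R(T)$-linear lift. I expect this step to be the main obstacle. One must check that the translation respects $\rhd$; this needs the base change property, and possibly an absorber-type condition on how $T$ acts. If translation fails to be $\rhd$-compatible, I would instead work in the slice category of pointed objects, where the equivalence with $R(T)$-modules is exact.

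For (2)$\Rightarrow$(3), choose an $R(T)$-free module $F$ with a split epimorphism $F\to\mathcal{G}_{e_p}(P)$. Realize $F$ as $\mathcal{G}_{x_0}$ of a free heap of $T$-modules using (c). Transport the splitting back through the equivalence to get heap maps $P\to \mathcal{F}\to P$ composing to the identity. It then remains to identify a split retract with a coproduct summand in $T\textbf{-HMod}$. For this I will use the earlier description of coproducts of heaps of $T$-modules; under $\mathcal{G}$ at a common base point, these correspond to direct sums of modules. The complement is then the heap associated to the kernel module.

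For (3)$\Rightarrow$(1), free heaps of $T$-modules are projective by their universal property together with (a): choose preimages of the generators. A coproduct summand of a projective is projective by the universal property of coproducts, extending a map from $P$ by an arbitrary map (say a constant one) on the complement.
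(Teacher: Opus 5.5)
Your (1)$\Rightarrow$(2) is the paper's argument: apply $\mathcal{H}$, lift, then correct the lift by the translation $a\mapsto[a,\tilde h(e_p),0_A]$. The step you flag as the main obstacle is routine. Translations are automorphisms of any heap of $T$-modules by the base change property; in $\mathcal{H}(A)$ one checks directly that $t\rhd_{x-a_0}(y-a_0)=ty-tx+x-a_0=(t\rhd_x y)-a_0$, so no absorber condition is needed. Your (3)$\Rightarrow$(1) is correct and more direct than the paper's. You show free heaps are projective by lifting generators, and a coproduct summand of a projective is projective by extending with a constant map on the complement. The paper instead shows that $\mathcal{G}_{e_p}(P)$ is a summand of a free module and invokes Theorem \ref{th1}.

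The gap is in (2)$\Rightarrow$(3). You claim that, under $\mathcal{G}$ at a common base point, coproducts in $T\textbf{-HMod}$ become direct sums. You then take the complement of $P$ in $\mathcal{F}$ to be $\mathcal{H}(K)$, where $\mathcal{G}_{s(e_p)}(\mathcal{F})\cong\mathcal{G}_{e_p}(P)\oplus K$. This misreads Construction \ref{xu}. For non-empty $P,P'$ one has $P\sqcup P'\cong\mathcal{H}(\mathcal{G}_{e_p}(P)\oplus\mathcal{G}_{e'}(P')\oplus R(T)_u)$, with an extra free summand generated by the symbol $b_i$. The coprojections have disjoint images, so there is no common base point; for instance, the coproduct of two singletons is $\mathcal{H}(R(T)_u)$, not a point. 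Consequently $P\sqcup\mathcal{H}(K)\cong\mathcal{H}(\mathcal{G}(\mathcal{F})\oplus R(T)_u)$, which is in general not isomorphic to $\mathcal{F}$.

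In fact, a retract of a free heap need not be a coproduct summand of that free heap at all. For the one-element truss, $R(T)\cong\mathbb{Z}$ and $R(T)_u\cong\mathbb{Z}\times\mathbb{Z}$. Then $\mathcal{H}(\mathbb{Z}\times 0)$ is a retract of the free heap $\mathcal{H}(\mathbb{Z}\times\mathbb{Z})$. It is not a coproduct summand of it: multiplying a putative decomposition by the idempotent $(1,0)$ and counting ranks gives a contradiction.

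The repair is to add one free generator. If $\mathcal{G}_{e_p}(P)\oplus K\cong F$ with $F$ free, then $\mathcal{H}(F\oplus R(T)_u)$ is a free heap, and Construction \ref{xu} gives $\mathcal{H}(F\oplus R(T)_u)\cong P\sqcup\mathcal{H}(K)$. The empty heap, which is trivially a coproduct summand, is handled separately. This extra copy of $R(T)_u$ is what the paper tracks by working in the slice category of Corollary \ref{zzz}. There $P\mapsto(\mathcal{G}_{e_p}(P)\oplus R(T)_u,\pi'_P)$, coproducts genuinely become direct sums, and the surplus summands $R(T)_u^{I\setminus i_0}$ are absorbed into the complement.
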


\begin{theorem}\label{main1} { (see Proposition \ref{th3} for details)
Let $P$ be a heap of $T$-modules. The following conditions are equivalent.}
\begin{enumerate}
    \item[(1)] $P$ is projective;
    \item[(2)] The functor $\operatorname{Hom}_{TH}(P,-): T\textbf{-HMod} \rightarrow  \textbf{Ah}$ preserves epimorphisms;
    \item[(3)] Every short exact sequence $K \stackrel{\tau}{\hookrightarrow} N \stackrel{\pi}{\twoheadrightarrow} P$ splits.
\end{enumerate}
\end{theorem}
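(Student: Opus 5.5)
The plan is to argue (1)$\Rightarrow$(2)$\Rightarrow$(1) and (1)$\Rightarrow$(3)$\Rightarrow$(1). We use that in $T\textbf{-HMod}$ epimorphisms are exactly the surjective morphisms. We also use that $\operatorname{Hom}_{TH}(P,N)$ is an abelian heap under the pointwise operation $[f,g,h](p)=[f(p),g(p),h(p)]_N$; this is a morphism of heaps of $T$-modules precisely because the structure is preserved by the base change property. Epimorphisms in $\textbf{Ah}$ are likewise the surjections.

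For (1)$\Leftrightarrow$(2), let $\pi\colon N\to M$ be an epimorphism. Then $\operatorname{Hom}_{TH}(P,\pi)$, given by $f\mapsto \pi\circ f$, is a heap morphism, and it is surjective exactly when every $g\colon P\to M$ lifts through $\pi$. This is precisely the definition of projectivity. If the paper's definition of projective is phrased via lifting along epimorphisms, this equivalence is immediate. The one point to check is that $\operatorname{Hom}_{TH}(P,-)$ really lands in $\textbf{Ah}$, which is the pointwise verification above.

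For (1)$\Rightarrow$(3), apply projectivity to $\pi$ and $\mathrm{id}_P$ to get $s\colon P\to N$ with $\pi s=\mathrm{id}_P$. To see that the sequence splits in the sense of the paper, I would fix $e_p\in P$ and $e_n=s(e_p)$. Then $\mathcal{G}_{e_n}$ turns $\tau,\pi$ into an exact sequence of $R(T)$-modules $0\to\mathcal{G}(K)\to\mathcal{G}_{e_n}(N)\to\mathcal{G}_{e_p}(P)\to 0$, where $K$ is viewed as the kernel heap, i.e.\ the fibre of $\pi$ over $e_p$. In this sequence $s$ becomes an $R(T)$-linear section. This gives $N\cong K\boxplus P$, or whatever form the paper's definition of splitting takes, presumably via the coproduct description in Theorem \ref{th2}.

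For (3)$\Rightarrow$(1), take $P$ and a free heap of $T$-modules $F$ on the underlying set of $P$, with the canonical epimorphism $\pi\colon F\to P$. Let $K$ be its kernel (a fibre). Then $K\hookrightarrow F\twoheadrightarrow P$ is short exact, so by (3) it splits. This makes $P$ a retract, i.e.\ a coproduct summand, of a free object, so $P$ is projective by Theorem \ref{th2}. Alternatively, via $\mathcal{G}_{e_p}$, the module $\mathcal{G}_{e_p}(P)$ is a summand of a free $R(T)$-module, and Theorem \ref{th1} applies.

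The main obstacle I expect is making ``short exact sequence'' and ``splits'' precise in this category. There is no zero object, so the kernel depends on a chosen base point, and the coproduct in $T\textbf{-HMod}$ is not the cartesian product. I would handle this by systematically passing through $\mathcal{G}_{e}$ at compatible base points ($e_n=s(e_p)$, with $\tau(K)=\pi^{-1}(e_p)$). I would then use that $\mathcal{G}_{e}$ is an equivalence onto pointed objects, which reduces the bookkeeping to ordinary split exact sequences of $R(T)$-modules.
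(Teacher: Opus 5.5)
Your (1)$\Leftrightarrow$(2), and your lifting of $\mathrm{Id}_P$ along $\pi$ in (1)$\Rightarrow$(3), essentially match the paper's (i)$\Rightarrow$(ii)$\Rightarrow$(iii). The paper defines ``splits'' simply as the existence of a section $s$ with $\pi\circ s=\mathrm{Id}_P$, so you can stop once $s$ is found. Drop the speculative decomposition of $N$ as a coproduct of $K$ and $P$, because it is false in general. By Construction \ref{xu}, $\mathcal{G}(K\sqcup P)\cong\mathcal{G}(K)\oplus\mathcal{G}(P)\oplus R(T)_u$, whereas the section only yields $\mathcal{G}_{e_n}(N)\cong\mathcal{G}(K)\oplus\mathcal{G}_{e_p}(P)$.

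For (3)$\Rightarrow$(1) you take a genuinely different route. The paper works with an arbitrary epimorphism $g\colon M\to N$ and $\phi\colon P\to N$:
\begin{itemize}
\item it forms the pullback $Q=\{(m,p)\mid g(m)=\phi(p)\}$ and notes that $\pi_P\colon Q\to P$ is surjective;
\item it splits $\pi_P^{-1}(e)\hookrightarrow Q\twoheadrightarrow P$ using (iii) and takes $\psi=\pi_M\circ s$.
\end{itemize}
This argument is purely categorical and uses neither free objects nor the $\mathcal{G}/\mathcal{H}$ correspondence.

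You instead split only the free cover $F\twoheadrightarrow P$ and then appeal to the structure theorems. Of your two endings, keep the one through $\mathcal{G}_{e_p}$. With base points $e_p$ and $e_n=s(e_p)$, the induced $R(T)$-linear maps are $\pi$ and $s$ themselves. Hence $\mathcal{G}_{e_p}(P)$ is a direct summand of the free $R(T)$-module $\mathcal{G}_{e_n}(F)$, and Theorem \ref{th1} applies.

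The phrase ``a retract, i.e.\ a coproduct summand'' is not automatic in $T\textbf{-HMod}$. A decomposition $N\cong P\sqcup K$ with $K\neq\varnothing$ forces an extra summand $R(T)_u$ in $\mathcal{G}(N)$, and a retraction does not provide this. For example, a one-point heap is a retract of every non-empty heap, but it is not a coproduct summand of any finite heap with more than one point. Invoking Theorem \ref{th2} would therefore need a further argument.

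In short, your route reduces the problem to module theory but depends on Theorem \ref{th1}, while the paper's argument is elementary and self-contained. Your route also needs $P=\varnothing$ treated separately. The empty heap is trivially projective, but the paper's $F_{TH}(\varnothing)$ is a singleton with no map to $\varnothing$.
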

The Gorenstein homological dimensions in the category of modules over a non-trivial associative ring are fully developed in \cite{rb}. However, $T\textbf{-HMod}$ is not even an abelian category, and the concept of homology does not exist in it. Thus the methods developed in the foregoing-mentioned paper can no longer be applied directly. To circumvent this obstacle here, we introduce some concepts, such as BasePoint-Compatible sequences, TH-Complex etc. in Section 4. Then, by means of the functors $\mathcal{G}$ and $\mathcal{H}$, we relate these concepts to the corresponding concepts in the category of modules over a non-trivial associative ring. The definition of Gorenstein projective objects in the Category $T\textbf{-HMod}$ is posed. It is shown that a heap of $T$-modules $M$ is BP Gorenstein projective if and only if $\mathcal{G}_{e_m}(M)$ is a Gorenstein projective $R(T)$-module for all $e_m\in M$. And we also prove the following result.
\begin{theorem}{(see Theorem \ref{th4} for details)
Let $M$ be a heap of $T$-modules with finite BP Gorenstein projective dimension and $n$ an integer. Then the following conditions are equivalent.}
\begin{enumerate}
    \item[(i)] $BPGpd_{TH}(M)\leq n $;
    \item[(ii)] ${\rm Ext}_{TH}^{i}(M, L) = e_i$ for all $i > n$, and for all heaps of $T$-modules $L$ with finite $BPpd_{TH}(L)$;
    \item[(iii)] ${\rm Ext}_{TH}^{i}(M, Q) = e_i$ for all $i > n$, and for all projective heaps of $T$-modules $Q$;
    \item[(iv)] For every BasePoint-Compatible sequence of heaps of $T$-modules $K_{n} \hookrightarrow G_{n-1} \to \cdots \to G_{0} \twoheadrightarrow M$, where $G_{0}, \dots, G_{n-1}$ are BPGorenstein projective, $K_{n}$ is BP Gorenstein projective.
\end{enumerate}
\end{theorem}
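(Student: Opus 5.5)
The plan is to transport the statement along the functors $\mathcal{G}_{e}$ and $\mathcal{H}$ to the classical result for Gorenstein projective dimension over the ring $R(T)$ (Holm's theorem, \cite{rb}), and then read the four conditions back in $T\textbf{-HMod}$. So the first task is to build a dictionary, for a chosen base point $e_m\in M$, between the heap-side notions and their module-side counterparts. The facts I would use, each stated earlier in the paper or easily derived from its constructions in Section 4, are these:
\begin{itemize}
\item[(a)] a sequence of heaps of $T$-modules is BasePoint-Compatible and exact if and only if its image under $\mathcal{G}$, with compatible base points, is an exact sequence of $R(T)$-modules;
\item[(b)] $P$ is projective if and only if $\mathcal{G}_{e_p}(P)$ is projective (Theorem \ref{th1}), and $G$ is BP Gorenstein projective if and only if $\mathcal{G}_{e}(G)$ is Gorenstein projective;
\item[(c)] $\mathrm{Ext}^i_{TH}(M,L)$ is, as an abelian heap, the heap underlying $\mathrm{Ext}^i_{R(T)}(\mathcal{G}_{e_m}(M),\mathcal{G}_{e_l}(L))$, so that ``$=e_i$'' means that this Ext group vanishes.
\end{itemize}
From (a) and (b) I would deduce that $BPGpd_{TH}(M)=\mathrm{Gpd}_{R(T)}\mathcal{G}_{e_m}(M)$ and $BPpd_{TH}(L)=\mathrm{pd}_{R(T)}\mathcal{G}_{e_l}(L)$. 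The point is that BP Gorenstein projective resolutions of $M$ correspond to Gorenstein projective resolutions of $\mathcal{G}_{e_m}(M)$, with $\mathcal{H}$ providing the inverse direction via $\mathcal{G}\mathcal{H}\cong \mathrm{id}$.

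With this dictionary, the four conditions translate as follows.
\begin{itemize}
\item[(i)] becomes $\mathrm{Gpd}\,\mathcal{G}_{e_m}(M)\le n$, where this module has finite Gorenstein projective dimension.
\item[(ii)] becomes the vanishing of $\mathrm{Ext}^i_{R(T)}(\mathcal{G}_{e_m}(M),X)$ for $i>n$ and all $X$ of the form $\mathcal{G}_{e_l}(L)$ with $\mathrm{pd}\,X<\infty$. Since every $R(T)$-module $X$ satisfies $X\cong\mathcal{G}(\mathcal{H}(X))$, this is the same as vanishing for all $X$ of finite projective dimension.
\item[(iii)] becomes the same vanishing for all projective $X$, using $\mathcal{H}$ and (b) in the same way.
\item[(iv)] is handled by taking a BasePoint-Compatible sequence as in (iv) and applying $\mathcal{G}$. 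This gives an exact sequence $0\to \mathcal{G}(K_n)\to \mathcal{G}(G_{n-1})\to\cdots\to\mathcal{G}(G_0)\to\mathcal{G}(M)\to 0$ with Gorenstein projective middle terms. Conversely, every such module sequence arises this way by applying $\mathcal{H}$. Hence (iv) becomes the module statement that every $n$-th Gorenstein syzygy is Gorenstein projective.
\end{itemize}
Holm's Theorem 2.20 in \cite{rb} asserts precisely that these four module conditions are equivalent when $\mathrm{Gpd}<\infty$, so the equivalences follow.

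The main obstacle is base-point independence. The hypotheses in (i)--(iv) refer to all base points, or to sequences whose compatibility pins down particular base points, while Holm's theorem applies to one module at a time. I would first show that the modules $\mathcal{G}_{e}(M)$ and $\mathcal{G}_{e'}(M)$ are isomorphic, via the translation $x\mapsto[x,e,e']$, which I expect is recorded in Section 2 or 4. This makes every dimension and Ext computation independent of the choice of base point. The second delicate point is (c), namely that $\mathrm{Ext}_{TH}$ as defined via TH-complexes really computes the module Ext. I would argue this by applying $\mathcal{G}$ to a BP projective resolution of $M$, which yields a projective resolution of $\mathcal{G}_{e_m}(M)$ by (a) and (b). I would then identify $\mathrm{Hom}_{TH}(P,L)$, pointed at the constant map to $e_l$, with $\mathrm{Hom}_{R(T)}(\mathcal{G}(P),\mathcal{G}(L))$, so that the heap-cohomology at the base point coincides with the group cohomology. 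If the paper defines Ext differently, this identification is the step I would check most carefully.
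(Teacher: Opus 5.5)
Your route is correct but organised differently from the paper's. The paper does not invoke Holm's Theorem~2.20 as a whole. It re-runs Holm's cycle of implications inside $T\textbf{-HMod}$, using heap-level lemmas that were each obtained earlier by transporting one of Holm's auxiliary results:
\begin{itemize}
\item (i)$\Rightarrow$(ii) is dimension shifting (Lemma~\ref{yy}) plus the Ext-vanishing of BP Gorenstein projectives against heaps of finite $BPpd_{TH}$ (Lemma~\ref{kk}).
\item (iii)$\Rightarrow$(iv) uses Lemma~\ref{yy} to place $K_n$ in ${}^{\bot}\mathcal{P}(TH)$ and Proposition~\ref{ii} to get $BPGpd_{TH}(K_n)<\infty$. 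It then takes a finite BP Gorenstein projective resolution of $K_n$ and applies Proposition~\ref{ddf}(ii) repeatedly.
\item The other two implications are declared obvious.
\end{itemize}
You transport once, at the level of the final statement. So you need only the dictionary (Proposition~\ref{zs}, Theorems~\ref{th1}, \ref{nm}, \ref{vv}, \ref{pp}) and none of Lemmas~\ref{yy}, \ref{kk} or Propositions~\ref{ii}, \ref{ddf}. Since those lemmas are themselves proved by transport, both proofs rest on the same foundations. Yours is shorter and sidesteps the syzygy bookkeeping of the paper's (iii)$\Rightarrow$(iv). It also makes explicit two points the paper passes over. First, the quantifiers over $L$ and $Q$ match Holm's because $\mathcal{G}\mathcal{H}(X)\cong X$ and $\mathcal{H}$ carries finite projective resolutions to BasePoint-Compatible ones. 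Second, the base point on $M$ forced by a BasePoint-Compatible sequence in (iv) is harmless, thanks to the translation isomorphisms $\mathcal{G}_{e}(M)\cong\mathcal{G}_{e'}(M)$. The paper's route, in exchange, yields heap-internal versions of the standard tools that can be reused elsewhere.

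One step of your sketch of (c) is wrong as stated, though it does not damage the plan. A morphism $P\to L$ in $T\textbf{-HMod}$ is an $R(T)$-linear map $\mathcal{G}_{e_p}(P)\to\mathcal{G}_{e_l}(L)$ followed by a translation of $L$. Hence $\operatorname{Hom}_{TH}(P,L)\cong\operatorname{Hom}_{R(T)}(\mathcal{G}_{e_p}(P),\mathcal{G}_{e_l}(L))\times L$ (the proposition just before Theorem~\ref{nm}), not $\operatorname{Hom}_{R(T)}(\mathcal{G}_{e_p}(P),\mathcal{G}_{e_l}(L))$. Pointed at the constant map, precomposition with a differential sends $(\alpha,q)$ to $(\alpha\circ\mathcal{G}(d),q)$. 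So the constant part is never killed, and $\operatorname{Hom}_{TH}(P^\bullet,L)$ is not a TH-complex unless $L$ is a singleton; its ``cohomology'' is not the module Ext. This is exactly why the paper defines $\operatorname{Ext}^n_{TH}$ through $\operatorname{Hom}^\bullet_{TH}(P,L)=\operatorname{Hom}_{TH}(P,L)/\operatorname{Tr}(L)\cong\mathcal{H}(\operatorname{Hom}_{R(T)}(\mathcal{G}_{e_p}(P),\mathcal{G}_{e_l}(L)))$ (Construction~\ref{oo}). With $\operatorname{Hom}^\bullet_{TH}$ in place of $\operatorname{Hom}_{TH}$ your identification goes through. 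In any case (c) is precisely Theorem~\ref{pp}, which you can cite directly.
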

In Section 5, we introduce the definition of (PA) Gorenstein trusses. It is proven that a unital truss $T$ is a PA Gorenstein truss if and only if $R(T)$ is an Iwanaga-Gorenstein ring; see Theorem \ref{rt}. In particular, we finally provide an equivalent characterization of Iwanaga-Gorenstein unital rings; see Corollary \ref{sj}.

These works are of great significance for further research on
the (Gorenstein) homological theory of $T\textbf{-HMod}$.

\section{Preliminaries}
In this section, we provide some definitions and basic properties of heaps, trusses and modules over trusses that will be used in this work. We omit some details here, but the reader can find them in \cite{mb,tb,rtb,tb2, tbp}.
\subsection{Heaps}
A heap is a set $H$ together with a ternary operation
$[-,-,-]: H\times H\times H\longrightarrow H$ which is associative and satisfies the Mal'cev identities, that is,
\begin{equation*}
[[a,b,c],d,e] = [a,b,[c,d,e]] \qquad \text{and} \qquad [a,b,b] = a = [b,b,a]
\end{equation*}
for all $a,b,c,d,e\in H$.
A heap $H$ is said to be abelian if for all $a,b,c\in H$, $[a,b,c] = [c,b,a]$.

A heap morphism from $(H,[-,-,-])$ to $(H^\prime,[-,-,-])$ is a function $f:H\longrightarrow H^\prime$ respecting the ternary operations, i.e., such that for all $x$, $y$, $z\in H$, $f([x,y,z])=[f(x),f(y),f(z)]$.
The category of heaps is denoted by \textbf{Heap} and the category of abelian heaps is denoted by \textbf{Ah}. A singleton set $\{\ast\}$  with the unique heap operation $[\ast,\ast,\ast]=\ast$, it is the terminal object in the category of heaps. The empty set is the initial object. There is no zero object in the category of heaps.

\begin{lemma}\label{le.se}{\rm \cite[Lemma 2.3]{rtb}}
Let $(H,[-,-,-])$ be a heap.
\begin{enumerate}
\item[{\rm (1)}] If $e$, $x$, $y\in H$ are such that $[x,y,e]=e$ or $[e,x,y]=e$, then $x=y$.
\item[{\rm (2)}] For all $v$, $w$, $x$, $y$, $z\in H$,
$$[v,w,[x,y,z]]=[v,[y,x,w],z].$$
\item[{\rm (3)}] For all $x$, $y$, $z\in H$,
$$[x,y,[y,x,z]]=[[z,x,y],y,x]=[x,[y,z,x],y]=z.$$
In particular, in the expression $[x,y,z]=w$, any three elements determine the fourth one.
\item[{\rm (4)}] If $H$ is abelian, then, for all $x_i$, $y_i$, $z_i\in H$, $i=1,2,3$,
$$[[x_1,x_2,x_3],[y_1,y_2,y_3],[z_1,z_2,z_3]]=[[x_1,y_1,z_1],[x_2,y_2,z_2],[x_3,y_3,z_3]].$$
\end{enumerate}
\end{lemma}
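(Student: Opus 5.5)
The plan is to use the standard correspondence between heaps and groups. Fix any $e\in H$ and define $a+b:=[a,e,b]$. Associativity of the heap operation together with the Mal'cev identities makes $(H,+)$ a group with neutral element $e$ and inverse $-a=[e,a,e]$. One then checks $[a,b,c]=a-b+c$. This follows from $[a,b,c]=[a,b,[e,e,c]]=[[a,b,e],e,c]$ and from $[a,b,e]=a-b$, where the latter is verified via $[[a,b,e],e,b]=[a,b,[e,e,b]]=[a,b,b]=a$. If $H$ is abelian, then $(H,+)$ is abelian, since $a+b=[a,e,b]=[b,e,a]=b+a$. Once this dictionary is in place, each item of the lemma becomes a group identity. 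The only real work is this first step: deriving the group axioms and the formula $[a,b,c]=a-b+c$ purely from the two heap axioms. The associativity of $+$, for instance, comes from $[[a,e,b],e,c]=[a,e,[b,e,c]]$.

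For (1), the relation $[x,y,e]=e$ reads $x-y+e=e$, so $x=y$; the other case is symmetric. One can also argue directly: $x=[x,y,y]=[x,y,[e,e,y]]=[[x,y,e],e,y]=[e,e,y]=y$. For (2), both sides equal $v-w+x-y+z$, because $[y,x,w]=y-x+w$ and hence $v-(y-x+w)+z=v-w+x-y+z$. Note that (2) holds even in the nonabelian case, since no reordering of terms is needed. For (3), one expands each expression. The first is $x-y+(y-x+z)=z$. The second is $(z-x+y)-y+x=z$. The third is $x-(y-z+x)+y=x-x+z-y+y=z$. The final claim, that any three elements determine the fourth, follows because $w=[x,y,z]$ can be solved as $x=[w,z,y]$, $y=[z,w,x]$ and $z=[y,x,w]$, and these solutions are exactly instances of the identities in (3).

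For (4), commutativity makes both sides equal to $\sum x_i$-type alternating sums. Explicitly, each side is $(x_1-x_2+x_3)-(y_1-y_2+y_3)+(z_1-z_2+z_3)$ rearranged, and both equal $x_1-y_1+z_1-x_2+y_2-z_2+x_3-y_3+z_3$. Here commutativity is essential.

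The main obstacle is just the careful verification, in the first step, that the retract group is well defined using only the heap axioms. Everything afterward is mechanical.
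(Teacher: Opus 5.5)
The paper gives no proof of this lemma; it is quoted from \cite[Lemma 2.3]{rtb}, so there is no internal argument to compare against. Your proof is correct. The retract $a+b=[a,e,b]$ is a group with inverse $[e,a,e]$; for example, $[a,e,[e,a,e]]=[[a,e,e],a,e]=[a,a,e]=e$. Your identity $[a,b,e]+b=a$ yields $[a,b,c]=a-b+c$, and the computations for (1)--(4) all check out. This includes $-(y-x+w)=-w+x-y$ in (2), which indeed needs no commutativity. Fixing $e$ is harmless, because each item already supplies elements of $H$.

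A genuinely different route stays inside the heap and chains associativity with the Mal'cev identities. For instance, (2) follows from $[[v,w,x],y,[y,x,w]]=[v,w,[x,y,[y,x,w]]]=[v,w,[[x,y,y],x,w]]=[v,w,w]=v$, because then $[v,w,[x,y,z]]=[[v,w,x],y,[[y,x,w],[y,x,w],z]]=[[[v,w,x],y,[y,x,w]],[y,x,w],z]=[v,[y,x,w],z]$. That approach is base-point free, but (4) becomes tedious bookkeeping with (2), associativity and commutativity.

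Your approach puts all the real work into the dictionary $[a,b,c]=a-b+c$. After that, each item, especially (4), is a one-line group identity, and it is clear that commutativity enters only in (4). It is also the same retraction $\mathcal{G}(M;e)$ versus $\mathcal{H}$ correspondence on which the rest of the paper is built.
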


A subset $S$ of a heap $H$ that is closed under the heap operation is called a sub-heap of $H$. Every non-empty sub-heap $S$ of an abelian heap $H$ defines a congruence relation $\sim_S$ on $H$:
\begin{equation*}
a\sim_S b \quad \iff \quad \exists~s\in S,\ [a,b,s]\in S \quad \iff \quad \forall~s\in S,\ [a,b,s]\in S.
\end{equation*}
The equivalence classes of $\sim_S$ form an abelian heap with operation induced from that in $H$. Namely,
$
[\bar a, \bar b, \bar c] = \overline{[a,b,c]}
$, where $\bar x$ denotes the class of $x$ in $H/\sim_S$ for all $x\in H$. This is known as the {\em quotient heap} and it is denoted by $H/S$.
For any $s\in S$, the class of $s$ is equal to $S$.
\subsection{Trusses}

\begin{definition}{\rm \cite[Definition 3.1]{rtb}}
A truss is an algebraic system consisting of a set $T$, a ternary operation $[-,-,-]$ making $T$ into an Abelian heap, and an associative binary operation $\cdot$ which distributes over $[-,-,-]$, that is, for all $w$, $x$, $y$, $z\in T$,
\begin{equation*}
w[x,y,z]=[wx,wy,wz], \quad  [x,y,z]w=[xw,yw,zw].
\end{equation*}
A truss is said to be commutative (abelian) if the binary operation $\cdot$ is commutative.
\end{definition}
A heap homomorphism between two trusses is a truss homomorphism if it respects multiplications. The category of trusses and their morphisms is denoted by ${\rm \textbf{Trs}}$.

Let $T$ be a truss. A left $T$-module is an abelian heap $M$ together with an associative left action $\lambda_M: T\times M \longrightarrow M$ of $T$ on $M$ that distributes over the heap operation. The action is denoted on elements by $t\cdot m = \lambda_M(t,m)$, with $t\in T$ and $m\in M$. Explicitly, the axioms of an action state that, for all $t,t',t''\in T$ and $m,m',m''\in M$,
\begin{subequations}
\begin{equation*}
t\cdot(t'\cdot m) = (tt')\cdot m,
\end{equation*}
\begin{equation*}
 [t,t',t'']\cdot m = [t\cdot m,t'\cdot m,t''\cdot m] ,
\end{equation*}
\begin{equation*}
 t\cdot [m,m',m''] = [t\cdot m,t\cdot m',t\cdot m''].
\end{equation*}
\end{subequations}

\begin{definition}{\rm \cite[Definition 2.7]{mb}}
Let $T$ be a truss. A {\it pointed module over $T$}, or {\it pointed $T$-module}, is an abelian group $G$ together with an action \mbox{$\cdot\colon T\times G\to G$} of the multiplicative semigroup of $T$ on $G$ such that for all $t,t',t''\in T$ and $g,h\in G$,
\begin{equation}\label{eq:TGrp}
[t,t',t'']\cdot g = t\cdot g - t'\cdot g + t''\cdot g \qquad  \text{and} \qquad t\cdot (g+h) = t\cdot g+t\cdot h.
\end{equation}
If $T$ is unital and $1_T \cdot g = g$ for all $g \in G$, then $G$ is called a {\em unital} pointed $T$-module.

A morphism of pointed $T$-modules is by definition a group homomorphism $f \colon  G \to G'$ such that
$f(t \cdot g) = t \cdot f(g)$
for all $g \in G$ and $t \in T$. For the sake of brevity, we may often call them \emph{$T$-linear group homomorphisms}.
All pointed $T$-modules together with $T$-linear group homomorphisms form the category $T\textbf{-Mod}_{\bullet}$.
\end{definition}

\subsection{The category $T\textbf{-HMod}$ of heaps of $T$-modules}
\begin{definition}{\rm \cite[Definition 2.5]{mb}}
Let $T$ be a truss. A \emph{heap of $T$-modules} $(M,[-,-,-],\triangleright)$ is an abelian heap $(M,[-,-,-])$ together with an operation
\[\triangleright : T \times M \times M \to M, \qquad (t,m,n) \mapsto t \triangleright_m n.\]
such that
\begin{enumerate}
\item[(HM1)] for all $m \in M$, the operation
\[\triangleright_m : T \times M \to M, \qquad (t,n) \mapsto t \triangleright_m n,\]
makes $M$ a left $T$-module,
\item[(HM2)] the operation $\triangleright$ satisfies the \emph{base change property}
\[t \triangleright_m n = [ t \triangleright_e n, t \triangleright_e m, m],\]
for all $ m,n,e \in M$, $t \in T$.
\end{enumerate}
 For the sake of brevity, we will often denote a heap of $T$-modules simply by $(M,\triangleright)$, leaving the heap structure $[-,-,-]$ understood. Such a heap of $T$-modules $M$ is called a left heap of $T$-modules, denoted by $_{T} M$. Similarly, one may define a right heap of $T$-modules.

A \emph{morphism of heaps of modules} is a morphism $f : M \to N$ of heaps such that
$$ f(t \triangleright_m n) = t \triangleright_{f(m)} f(n). $$
for all $m,n \in M$, $t \in T$. The category of heaps of $T$-modules and their morphisms will be denoted by $T\textbf{-HMod}$.
Suppose that $T$ is unital with unit $1_T$. A heap of $T$-modules $M$ is said to be \emph{isotropic} if $1_T \triangleright_m n = n$ for all $m,n \in M$. The full subcategory of isotropic heaps of $T$-modules is denoted by $T\textbf{-HMod}_{\textbf{is}}$.
\end{definition}

\begin{remark}{\rm \cite[Remark 2.16]{mb}}
Let $T$ be a truss and $G$ a pointed $T$-module. If we consider the map
\[\triangleright : T \times G \times G \longrightarrow G, \qquad (t,x,y) \longmapsto t \triangleright_x y = t \cdot y - t \cdot x + x,\]
then $\mathcal{H}(G) := (\mathrm{H}(G),\triangleright)$ is a non-empty heap of $T$-modules. Furthermore, the assignment $\mathcal{H} : (G,\cdot) \longmapsto (\mathrm{H}(G),\triangleright)$ from the category of pointed $T$-modules to the category of non-empty heaps of $T$-modules is functorial.
In the opposite direction, let $(M,\triangleright)$ be a non-empty heap of $T$-modules. Then for any chosen $e \in M$, we can consider
\[\triangleright_e : T \times M \to M\]
and the pair $(\mathrm{G}(M;e),\triangleright_e)$ turns out to be a pointed $T$-module, which we denote by $\mathcal{G}((M,\triangleright);e)$. The assignment $\mathcal{G} : (M,\triangleright) \longmapsto (\mathrm{G}(M;e),\triangleright_e)$ from the category of non-empty heaps of $T$-modules to the one of pointed $T$-modules is itself functorial, if for every morphism of heaps of $T$-modules $\varphi : (M,\triangleright) \to (N,\triangleright)$ and for every chosen $e \in M$ and $f \in N$, we consider
\[\mathcal{G}(\varphi) := \tau_{f}^{\varphi(e)} \circ \varphi : (\mathrm{G}(M;e),\triangleright_e) \longrightarrow (\mathrm{G}(N;f),\triangleright_f).\]
Moreover, one may observe that $\mathcal{H}\mathcal{G}((M,\triangleright);e) = (M,\triangleright)$ for every non-empty heap of $T$-modules $(M,\triangleright)$ and for all $e \in M$, and $\mathcal{G}(\mathcal{H}(G,\cdot);e) = (\mathrm{G}(\mathrm{H}(G);e),\triangleright_e) \cong (G,\cdot)$ via the translation automorphism $\tau_e^{0_G}$, for every pointed $T$-module $(G,\cdot)$.
\end{remark}

\begin{theorem}{\rm \cite[Theorem 3.1]{mb}}\label{xx}\rm
~The category $T\textbf{-Mod}_{\bullet}$ of pointed $T$-modules is isomorphic to the usual, ring-theoretic, category $\mathrm{R}(T)\textbf{-Mod}$ of $\mathrm{R}(T)$-modules. In particular, it is an abelian category. Furthermore, if $T$ is unital, then the foregoing isomorphism restricts to the categories of unital pointed $T$-modules and unital $\mathrm{R}(T)$-modules as well.
\end{theorem}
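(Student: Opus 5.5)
The plan is to construct the ring $R(T)$ explicitly and then set up mutually inverse functors between $T\textbf{-Mod}_{\bullet}$ and $R(T)\textbf{-Mod}$ that are the identity on underlying abelian groups and on morphisms; this gives an isomorphism of categories, not merely an equivalence. I take $R(T)$ to be the universal ring of \cite{tb1}. Concretely, form the free abelian group $\mathbb{Z}T$ on the set $T$, and extend the multiplication of $T$ bilinearly to make it a (possibly non-unital) ring. Then quotient by the ideal $I$ generated by the elements $[t,t',t'']-t+t'-t''$, where the first term is read as a basis element. These relations are stable under left and right multiplication by basis elements, by the two distributive laws of the truss, so the span of these generators is already a two-sided ideal. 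Set $R(T)=\mathbb{Z}T/I$. If $T$ is unital, the class of $1_T$ is a unit of $R(T)$.

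First, let $(G,\cdot)$ be a pointed $T$-module. Extend the action $\mathbb{Z}$-linearly to an action of $\mathbb{Z}T$ on $G$, which is well defined because $G$ is an abelian group and each $t\cdot-$ is additive. Associativity of this action holds on basis elements because $T$ acts as a semigroup, and it then extends by linearity. By the first identity in \eqref{eq:TGrp}, $I$ acts as zero, so the action descends to $R(T)$. Conversely, given an $R(T)$-module, restrict the action along the composite $T\to\mathbb{Z}T\to R(T)$. The relations defining $I$ give exactly $[t,t',t'']\cdot g=t\cdot g-t'\cdot g+t''\cdot g$, and additivity in $g$ is automatic. These two constructions are inverse to each other, since an $R(T)$-action is determined by its values on the images of $T$, which generate $R(T)$ additively.

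On morphisms, a group homomorphism $f$ is $T$-linear if and only if it is $\mathbb{Z}T$-linear, if and only if it is $R(T)$-linear, again because the images of $T$ generate $R(T)$ additively. Both functors are therefore the identity on hom-sets, so they are mutually inverse isomorphisms of categories. Abelianness of $T\textbf{-Mod}_{\bullet}$ then transfers from $R(T)\textbf{-Mod}$. For the unital statement, $1_T\cdot g=g$ for all $g$ holds exactly when the class of $1_T$ in $R(T)$ acts as the identity, so the isomorphism restricts to the unital subcategories.

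The main obstacle is to show that the ideal $I$ is compatible with the truss structure and that $R(T)$ has the expected universal property, so that the construction agrees with the $R(T)$ of \cite{tb1}. Everything else is formal, since both categories consist of abelian groups with extra structure that is determined by the action of the generating set $T$.
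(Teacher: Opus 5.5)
The paper does not prove this statement; it quotes it verbatim from \cite[Theorem 3.1]{mb}, so there is no internal argument to compare yours against. Your proof is correct and self-contained. The additive span of the elements $[t,t',t'']-t+t'-t''$ is already a two-sided ideal of $\mathbb{Z}T$, by the truss distributive laws. The first identity in \eqref{eq:TGrp} says precisely that this span annihilates $G$. Since the classes $\bar t$ generate $R(T)$ additively, $T$-linearity and $R(T)$-linearity of a group homomorphism coincide. Both functors are the identity on underlying groups and maps, so you get an isomorphism of categories, not merely an equivalence. You also read $R(T)\textbf{-Mod}$ correctly as not-necessarily-unital modules, even when $T$ (and hence $R(T)$) is unital; this is what makes the separate unital restriction meaningful. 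That category is abelian because it is the category of unital modules over the Dorroh extension $R(T)_u$.

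The step you single out as the main obstacle is routine. For a ring $R$ with associated truss $\mathcal{T}(R)$, a truss morphism $T\to\mathcal{T}(R)$ is the same thing as a ring map $\mathbb{Z}T\to R$ vanishing on your generators. Hence $\mathbb{Z}T/I$ has the universal property that determines $R(T)$ up to unique isomorphism. In these terms, your argument is just the observation that a pointed $T$-module structure on $G$ is a truss morphism $T\to\mathcal{T}(\operatorname{End}_{\mathbb{Z}}(G))$.

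If you want to match the explicit model the paper uses later (Lemma \ref{fg}, unital case), consider the map $\mathcal{G}_{1_T}(T)\oplus\mathbb{Z}\to\mathbb{Z}T/I$, $(t,m)\mapsto\bar t+(m-1)\overline{1_T}$. It is additive. It is bijective, because both groups, with $t\mapsto(t,1)$ and $t\mapsto\bar t$ respectively, are the universal abelian group of the heap $T$. It is multiplicative, since the image of $(t,m)(s,n)=(ts+(n-1)t+(m-1)s,mn)$ and the product $(\bar t+(m-1)\overline{1_T})(\bar s+(n-1)\overline{1_T})$ both equal $\overline{ts}+(n-1)\bar t+(m-1)\bar s+(m-1)(n-1)\overline{1_T}$.
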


\begin{theorem}{\rm \cite[Theorem 3.8]{mb}}\rm
~The category of heaps of $T$-modules is isomorphic to the category of affine $\mathrm{R}(T)$-modules.
\end{theorem}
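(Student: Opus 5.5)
The statement is that $T\textbf{-HMod}$ is isomorphic to the category of affine $R(T)$-modules. I would build explicit mutually inverse functors and reduce most verifications to Theorem \ref{xx}, i.e.\ to the isomorphism $T\textbf{-Mod}_{\bullet}\cong R(T)\textbf{-Mod}$.

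\textbf{Convention.} An affine $R(T)$-module is an abelian heap $M$ together with, for each base point $e\in M$, an $R(T)$-module structure on the retract group $\mathrm{G}(M;e)$. These structures must be compatible under the translations $\tau_f^{e}$, and the action must factor through a base-point-independent affine action. Equivalently, an affine module over $R$ is a heap $M$ with an operation $R\times M\times M\to M$, $(r,m,n)\mapsto r\triangleright_m n$, such that each $\triangleright_m$ is an $R$-module structure on $\mathrm{G}(M;m)$ and the base change law $r\triangleright_m n=[r\triangleright_e n, r\triangleright_e m, m]$ holds. Morphisms are heap maps $f$ with $f(r\triangleright_m n)=r\triangleright_{f(m)}f(n)$.

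\textbf{Step 1 (object level).} Let $(M,\triangleright)$ be a heap of $T$-modules. For each $e\in M$, Remark 2.16 makes $\mathcal{G}(M;e)$ a pointed $T$-module. Theorem \ref{xx} then extends it uniquely to an $R(T)$-module structure on $\mathrm{G}(M;e)$, with $T\subseteq R(T)$ acting as before and the formal differences/sums generating $R(T)$ acting by the corresponding group combinations. I then check that the extended actions still satisfy the base change property for all $r\in R(T)$.

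The obstacle here is that base change is only given for $t\in T$. I would handle it as follows.
\begin{itemize}
\item Fix $m,e$. The map $\tau_m^{e}$ is a group isomorphism $\mathrm{G}(M;e)\to\mathrm{G}(M;m)$.
\item Base change for $t\in T$ says exactly that $\tau_m^e\circ(t\triangleright_e -)$ and $(t\triangleright_m -)\circ\tau_m^e$ differ by a term depending on $t$ only through $t\triangleright_e m$.
\item The affine expression $r\mapsto [r\triangleright_e n, r\triangleright_e m, m]$ is additive in $r$ in the group $\mathrm{G}(M;m)$, and so is $r\mapsto r\triangleright_m n$.
\item Both agree on the generators $T$ of $R(T)$ as an abelian group, hence they agree on all of $R(T)$.
\end{itemize}
This gives an affine $R(T)$-module. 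Restricting the action to $T$ returns $\triangleright$, so the construction is injective on objects.

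\textbf{Step 2 (inverse direction).} Given an affine $R(T)$-module, restrict the ternary action to $T\subseteq R(T)$. Axioms (HM1) and (HM2) follow immediately: (HM1) is the restriction of Theorem \ref{xx} to each base point, and (HM2) is base change restricted to $T$. Uniqueness of the extension in Theorem \ref{xx} shows the two constructions are mutually inverse on objects.

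\textbf{Step 3 (morphisms).} Both categories have heap maps as morphisms, so the functors act as the identity on underlying maps. I need that a heap map preserving $\triangleright$ for $t\in T$ also preserves it for all $r\in R(T)$. Fix $m$. The map $f$ is a group homomorphism $\mathrm{G}(M;m)\to\mathrm{G}(N;f(m))$ that is $T$-linear, hence $R(T)$-linear by Theorem \ref{xx}. The converse is trivial.

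\textbf{Empty heap.} The empty heap corresponds to itself and needs separate mention.

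The additivity check in Step 1 is the main technical point; everything else is bookkeeping around Theorem \ref{xx}.
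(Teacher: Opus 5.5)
The paper does not prove this statement itself. It is quoted from \cite[Theorem 3.8]{mb} and used only as a black box, so there is no internal argument to compare yours with.

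Your proof is correct. You transport each pointed $T$-module $\mathcal{G}(M;e)$ to an $R(T)$-module via Theorem \ref{xx} and check that base change survives the extension. You then restrict back along $T\subseteq R(T)$ and note that a $T$-linear group map between retracts is automatically $R(T)$-linear, because $T$ generates $R(T)$ additively. The same fact gives uniqueness of the extension, which makes the two constructions mutually inverse.

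The key check is sound. In $\mathrm{G}(M;m)$ one has $[r\triangleright_e n, r\triangleright_e m, m]=[\,r\triangleright_e n-_e r\triangleright_e m,\ e,\ m\,]$. This is the image of $r\triangleright_e(n-_e m)$ under the group isomorphism $x\mapsto[x,e,m]$. So both sides of base change are additive in $r$, and they coincide on $T$ by (HM2).

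Two small remarks.

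First, your preliminary bullet undersells (HM2). Expand $t\triangleright_m[n,e,m]$ using (HM1) and (HM2), together with $t\triangleright_m m=m$ and $t\triangleright_e e=e$, both consequences of (HM2). The result is exactly $[t\triangleright_e n,e,m]$. So the translation $\mathrm{G}(M;e)\to\mathrm{G}(M;m)$ is $T$-linear on the nose, hence $R(T)$-linear by Theorem \ref{xx}. Substituting $[n,m,e]$ for $n$ and using $r\triangleright_e e=e$ then gives base change for every $r\in R(T)$ in one line. This is an equivalent shortcut to your additivity argument.

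Second, the paper never defines affine $R(T)$-modules, so the statement is only as precise as the convention you fix. Yours is the right one. It is essential that the retract $R(T)$-modules are not required to be unital, since heaps of $T$-modules are not assumed isotropic.
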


\begin{construction}{\rm \cite[Construction 4.10]{mb}}\label{xu}
The coproduct of an empty family is the initial object of the category of heaps of $T$-modules (the empty heap). Since the empty heap of $T$-modules is the initial object in our category, it is enough to study the coproduct of a family of non-empty members.
Let $(G_{i})_{i\in I}$ be a family of non-empty heaps of $T$-modules. In order to construct a coproduct for it, we proceed as follows:
\begin{enumerate}
    \item[(a)] for every $i\in I$, we fix a pointed $T$-module structure $(G_{i},+)$ associated to $G_{i}$;
    \item[(b)] we fix an index $i_{0}\in I$ and a family of symbols $(b_{i})_{i\in I\setminus\{i_{0}\}}$;
    \item[(c)] let $C$ be the heap associated with the pointed $T$-module $(\oplus_{i\in I}G_{i})\oplus F$, where $F$ is the free pointed $T$-module which has as a basis the family $(b_{i})_{i\in I\setminus\{i_{0}\}}$;
    \item[(d)] if $u_{i}:G_{i}\to\oplus_{i\in I}G_{i}$ are the canonical morphisms, we define $v_{i_{0}}=u_{i_{0}}$, and $v_{i}=u_{i}+b_{i}$ for all $i\in I\setminus\{i_{0}\}$.
\end{enumerate}
Then the pair $(C,(v_{i})_{i\in I})$ represents a coproduct of the family $(G_{i})_{i\in I}$ in $T$\textbf{-HMod}.
\end{construction}

\begin{corollary}{\rm \cite[Corollary 4.15]{mb}}\label{zzz}\rm
~Let $T$ be a truss, not necessarily unital. Then the category of heaps of $T$-modules is equivalent to the full subcategory $T \textbf{-Mod}_{\bullet}\twoheaddownarrow \mathrm{R}(T)_{u}$ of the slice category of pointed $T$-modules over $\mathrm{R}(T)_{u}$.
\end{corollary}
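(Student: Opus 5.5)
The plan is to build an explicit "homogenization" functor and check that it is an equivalence onto the stated slice subcategory. I take $\mathrm{R}(T)_u$ to be $\mathbb{Z}$ with the trivial action $t\cdot k=k$. This choice is my guess, not something fixed by the text above: it satisfies (\ref{eq:TGrp}), since $k-k+k=k$, and it should be the pointed module corresponding under Theorem~\ref{xx} to $\mathbb{Z}$ viewed as an $\mathrm{R}(T)$-module through the canonical augmentation. The construction has the same flavour as Construction~\ref{xu}, which adjoins free generators. Everything is done inside $T\textbf{-Mod}_{\bullet}$, and Theorem~\ref{xx} translates to $\mathrm{R}(T)$-modules where convenient.

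\emph{Step 1: homogenizing a heap of modules.} For a non-empty $(M,\triangleright)$, fix $e\in M$ and set $\widehat M_e=\mathrm{G}(M;e)\oplus\mathbb{Z}$ with action $t\cdot(x,k)=(t\triangleright_e x,\,k)$. This is a pointed $T$-module because $\triangleright_e$ is a module structure on the group $\mathrm{G}(M;e)$, in which $e$ is neutral. The projection $p_M:\widehat M_e\to\mathbb{Z}$ is a $T$-linear epimorphism. Under $m\mapsto (m,1)$, the fibre $p_M^{-1}(1)$ recovers $M$:
\begin{itemize}
\item the heap operation $u-v+w$ on the fibre restricts to $[-,-,-]$;
\item the expression $t\cdot(n,1)-t\cdot(m,1)+(m,1)=(t\triangleright_e n-t\triangleright_e m+m,\,1)$ equals $(t\triangleright_m n,1)$ by the base change property (HM2).
\end{itemize}
For $e'\in M$, the map $(x,k)\mapsto(x-ke',k)$, with $-ke'$ computed in $\mathrm{G}(M;e)$ and then $x$ read in $\mathrm{G}(M;e')$, should give an isomorphism $\widehat M_e\cong\widehat M_{e'}$ over $\mathbb{Z}$. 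This is a routine translation check using (HM2) again, and it shows that the object over $\mathbb{Z}$ does not depend on $e$ up to canonical isomorphism. More intrinsically, $\widehat M$ is the free abelian group on $M$ modulo $[a,b,c]=a-b+c$, with augmentation $m\mapsto 1$.

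\emph{Step 2: functoriality and the inverse.} A morphism $\varphi:M\to N$ of heaps of $T$-modules is an affine map. It therefore extends uniquely to a group homomorphism $\widehat\varphi:\widehat M\to\widehat N$ over $\mathbb{Z}$, sending $(m,1)\mapsto(\varphi(m),1)$. On $\mathrm{G}(M;e)$, $\widehat\varphi$ is $\mathcal{G}(\varphi)$, the translate of $\varphi$ from the Remark above, and $\mathcal G(\varphi)$ is $T$-linear. Hence $\widehat\varphi$ is $T$-linear, and $\Phi:M\mapsto(p_M:\widehat M\twoheadrightarrow\mathbb{Z})$ is a functor.

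Conversely, for a $T$-linear epimorphism $q:G\twoheadrightarrow\mathbb{Z}$, let $\Psi(q)=q^{-1}(1)$ with $[a,b,c]=a-b+c$ and $t\triangleright_m n=tn-tm+m$. Since $q(tn)=q(n)=1$, this operation stays in the fibre. Axiom (HM1) can be verified directly, and (HM2) is a one-line computation. A morphism over $\mathbb{Z}$ restricts to a map of fibres, so $\Psi$ is a functor. The empty heap is handled separately, and this is the one place where I am unsure of the intended convention. I would send it to $0\to\mathbb{Z}$, since $0$ has empty fibre over $1$, and check that the full subcategory in the statement is meant to contain this object alongside the epimorphisms.

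\emph{Step 3: the equivalence.} Step 1 gives $\Psi\Phi\cong\mathrm{id}$. For $\Phi\Psi\cong\mathrm{id}$, pick $g\in q^{-1}(1)$. Then $G=\ker q\oplus\mathbb{Z}g$ as groups, and $\ker q=\mathrm{G}(q^{-1}(1);g)$ via $x\mapsto x+g$. The action matches because $t(x+g)-tg+g=tx+g$. Naturality in both directions is formal once Step 2 is in place.

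I expect the main obstacle to be making Step 1 genuinely choice-free. The $T$-action on $\widehat M$ must not depend on the base point $e$; otherwise $\Phi$ is not well defined on morphisms, and full faithfulness can fail. I would first try to prove this via the intrinsic presentation (free abelian group modulo affine relations), defining $t\cdot m$ on generators and checking compatibility with $[a,b,c]=a-b+c$. The coordinate description is then used only for computations.
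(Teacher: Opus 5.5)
\textbf{There is a genuine gap: the base object is wrong.}

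$\mathrm{R}(T)_u$ is not $\mathbb{Z}$ with the trivial action. It is the Dorroh extension $\mathrm{R}(T)\oplus\mathbb{Z}$, as in the definition of $F_{TH}(X)$. As a pointed $T$-module it is the free one on $b=(0,1)$, with $t\cdot b=(t,0)$. This is the module Construction~\ref{xu} adjoins, and the proof of Theorem~\ref{th2} quotes the functor as $P\mapsto(\mathcal{G}_\star(P\sqcup\ast),\pi_P)\cong(\mathcal{G}_{e_p}(P)\oplus\mathrm{R}(T)_u,\pi'_P)$.

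With the trivial action, $q(tg)=t\cdot q(g)=1$, so $T$ maps $q^{-1}(1)$ into itself. Every object of your subcategory therefore carries an honest $T$-action on its fibre, which a heap of $T$-modules does not have. This breaks each step:
\begin{itemize}
\item \emph{Step 2.} Write $\delta$ for $\varphi(e)$ viewed in $\mathrm{G}(N;f)$. Then $\widehat\varphi(x,k)=(\mathcal{G}(\varphi)(x)+k\delta,k)$, and $T$-linearity at $(e,1)$ forces $t\triangleright_f\varphi(e)=\varphi(e)$ for all $t$. Linearity of $\mathcal{G}(\varphi)$ only covers the summand $\mathrm{G}(M;e)$. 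For $T=T(R)$, $M=N=\mathcal{H}(R)$, $e=f=0$ and the morphism $\varphi(x)=x+a$, the condition reads $ta=a$ for all $t\in R$. This is false for $a\neq 0$.
\item \emph{Step 1.} Taking $\varphi=\mathrm{id}_M$ and $f=e'$, the identification $\widehat M_e\cong\widehat M_{e'}$ that is the identity on fibres is $T$-linear only if $t\triangleright_{e'}e=e$ for all $t$.
\item \emph{Step 3.} $G=\ker q\oplus\mathbb{Z}g$ holds only as groups, since $\mathbb{Z}g$ is a submodule only if $tg=g$.
\end{itemize}

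So the obstacle you flag is fatal, not technical. Your subcategory is equivalent to non-empty $T$-modules in the sense of Section~2.2 (plus $\varnothing$), and that category is not equivalent to $T\textbf{-HMod}$ in general. Take the truss $(\mathbb{Z},[u,v,w]=u-v+w,+)$ and the object $\mathbb{Z}^2\to\mathbb{Z}$, $(x,k)\mapsto k$, with $t\cdot(x,k)=(x+kt,k)$. It is not initial, yet it receives no morphism from the terminal object $\mathrm{id}_{\mathbb{Z}}$. By contrast, every non-empty heap of $T$-modules receives one from the singleton, because $t\triangleright_m m=m$.

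\textbf{The repair: adjoin a free generator instead of a copy of $\mathbb{Z}$.} The rest of your outline then works.
\begin{itemize}
\item Set $\widehat M_e=\mathrm{G}(M;e)\oplus\mathrm{R}(T)_u$ with $t\cdot(x,r)=(t\triangleright_e x,tr)$, and identify $M$ with the fibre over $b$ via $m\mapsto(m,b)$. Your (HM2) computation goes through unchanged, since $tb-tb+b=b$.
\item Define $\widehat\varphi(x,r)=(\mathcal{G}(\varphi)(x)+r\cdot\delta,\,r)$. Here $r\mapsto r\cdot\delta$ is the $T$-linear map out of the free module $\mathrm{R}(T)_u$ sending $b$ to $\delta$. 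So $\widehat\varphi$ is $T$-linear and restricts to $\varphi$ on fibres.
\item Because $tb=(t,0)$ leaves the fibre, no action on the fibre is recorded. The resulting object is $\mathcal{G}_\star(M\sqcup\ast)\to\mathcal{G}_\star(\ast\sqcup\ast)=\mathrm{R}(T)_u$.
\item The choice-free description you wanted is the pointed $T$-module generated by symbols $\langle m\rangle$ subject to $\langle[x,y,z]\rangle=\langle x\rangle-\langle y\rangle+\langle z\rangle$ and $\langle t\triangleright_m n\rangle=t\langle n\rangle-t\langle m\rangle+\langle m\rangle$.
\item In Step~3, replace $\mathbb{Z}g$ by $\mathrm{R}(T)_u g$, the image of the section $r\mapsto r\cdot g$. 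This section exists for every surjection $\pi$ because $\mathrm{R}(T)_u$ is free on $b$.
\end{itemize}
The essential image is then the surjections onto $\mathrm{R}(T)_u$ (all split), together with $0\to\mathrm{R}(T)_u$ for the empty heap.
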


\begin{definition}{\rm \cite[Definition 5.1]{mb}}
Let
\begin{equation}
M \xrightarrow{f} N\xrightarrow{g} P
\end{equation}
be a sequence of heaps of $T$-modules and morphisms of heaps of $T$-modules. We may suppose, without loss of generality, that $P \neq \varnothing$.
We say that the sequence is \textit{exact in $N$ at $e$} if there exists $e\in g(N)$ such that $f(M)=g^{-1}(e)$.
\end{definition}

\section{Projective objects in the category $T\textbf{-HMod}$}

The definition of projective objects in the category $T\textbf{-HMod}$ is introduced. The  equivalent characterizations of projective objects in the category $T\textbf{-HMod}$ will be systematically studied throughout this section.

\subsection{Projective heaps of $T$-modules}

\begin{definition}
Let $X$ be a set. The free heap of $T$-modules $F_{TH}(X)$ is generated by $X$:
$$F_{TH}(X):=\mathcal{H}(\bigoplus_{x\in X}R(T)_u),$$
where $R(T)_u$ is the Dorroh extension of the ring $R(T)$.
\end{definition}

\begin{definition}\label{proj}
A heap of $T$-modules $P$ is said to be projective if for any epimorphism $f:M\rightarrow N$ in $T\textbf{-HMod}$ and any morphism $g:P\rightarrow N$, there exists a morphism $h:P\rightarrow M$ such that $f\circ h=g$. Diagrammatically,
$$ \xymatrix{ & P \ar@{-->}[dl]_{\exists h} \ar[d]^{g} & \\
M \ar[r]_{f} & N } $$
The class of projective heaps of $T$-modules is denoted by $\mathcal{P}(TH)$.
\end{definition}

\begin{remark}
Theorem \ref{xx} implies that the $T$-module action on a pointed $T$-module induces an $R(T)$-module action, thereby endowing it with the structure of an $R(T)$-module, conversely holds for an $R(T)$-module. Which means a pointed $T$-module can be naturally regarded as an $R(T)$-module, and the converse holds.
\end{remark}
\subsection{Equivalent characterizations of projective heaps of $T$-modules}
\begin{theorem}\label{th1}\rm
A heap of $T$-modules $P$ is projective if and only if $\mathcal{G}_{e_p}(P)$ (fix an element $e_p\in P$, denote by $\mathcal{G}_{e_p}(P)$ the pointed $T$-module obtained from $P$ by retracting at $e_p$) is a projective $R(T)$-module, for all $e_p\in P \Leftrightarrow \exists~ e_p\in P$.
\end{theorem}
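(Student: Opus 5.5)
The plan is to reduce both directions to ordinary lifting problems for $R(T)$-modules, moving between the two settings with the functors $\mathcal{G}$ and $\mathcal{H}$ of the Remark preceding Theorem~\ref{xx}, and to identify pointed $T$-modules with $R(T)$-modules via Theorem~\ref{xx}.

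\emph{Preliminary steps.}
\begin{enumerate}
\item[(a)] The empty heap is initial, so it is trivially projective and the condition on $\mathcal{G}_{e_p}(P)$ is vacuous for it. From now on I assume $P\neq\varnothing$.
\item[(b)] The ``for all $e_p$'' and ``there exists $e_p$'' versions are equivalent. For $e,e'\in P$, the translation $\tau_{e'}^{e}\colon x\mapsto [x,e,e']$ is an isomorphism $\mathcal{G}_e(P)\cong\mathcal{G}_{e'}(P)$ of pointed $T$-modules; this is $\mathcal{G}(\mathrm{id}_P)$. Projectivity is invariant under isomorphism.
\item[(c)] A heap map $\varphi\colon M\to N$ with $\varphi(e)=f$ is a morphism in $T\textbf{-HMod}$ if and only if it is $\triangleright_e$-to-$\triangleright_f$ linear. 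One direction is immediate. For the other, use (HM2): $t\triangleright_q x=[t\triangleright_e x,\,t\triangleright_e q,\,q]$, and $\varphi$ preserves each of the three terms and the bracket.
\item[(d)] Epimorphisms in $T\textbf{-HMod}$ are exactly the surjective morphisms. Surjective maps are clearly epi. For the converse I would use that $T\textbf{-HMod}$ is isomorphic to affine $R(T)$-modules. Alternatively, given an epi $f\colon M\to N$, compare the quotient $N\to N/f(M)$ with the constant map to the class $\overline{f(M)}$. These agree after composing with $f$, so $N/f(M)$ is a singleton, i.e. $f(M)$ is all of $N$. (The case $M=\varnothing$ forces $N=\varnothing$.) This is the step I expect to require the most care, since the quotient construction must be checked to be a heap of $T$-modules.
\end{enumerate}

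\emph{($\Leftarrow$)} Let $f\colon M\to N$ be an epimorphism, hence surjective, and $g\colon P\to N$ a morphism.
\begin{enumerate}
\item[(i)] Fix $e_p\in P$, put $n=g(e_p)$, and choose $m\in M$ with $f(m)=n$.
\item[(ii)] By (c), $f\colon\mathcal{G}_m(M)\to\mathcal{G}_n(N)$ and $g\colon\mathcal{G}_{e_p}(P)\to\mathcal{G}_n(N)$ are $R(T)$-linear, and the former is surjective.
\item[(iii)] Projectivity of $\mathcal{G}_{e_p}(P)$ gives an $R(T)$-linear $h\colon\mathcal{G}_{e_p}(P)\to\mathcal{G}_m(M)$ with $fh=g$. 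Since $h(e_p)=m$, step (c) makes $h$ a morphism of heaps of $T$-modules.
\end{enumerate}

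\emph{($\Rightarrow$)} Let $\pi\colon A\to B$ be a surjection of $R(T)$-modules and $g\colon\mathcal{G}_{e_p}(P)\to B$ a linear map.
\begin{enumerate}
\item[(i)] Apply $\mathcal{H}$. Then $\mathcal{H}(\pi)$ is surjective, hence an epimorphism in $T\textbf{-HMod}$. Also $g\colon P=\mathcal{H}\mathcal{G}_{e_p}(P)\to\mathcal{H}(B)$ is a morphism by (c).
\item[(ii)] Projectivity of $P$ yields a morphism $h\colon P\to\mathcal{H}(A)$ with $\pi h=g$.
\item[(iii)] Define $h'(x)=h(x)-h(e_p)$. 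Since $\pi(h(e_p))=g(e_p)=0$, we have $h(e_p)\in\ker\pi$ and therefore $\pi h'=g$.
\item[(iv)] $h'$ is a morphism (a translate of a morphism) sending $e_p$ to $0$. By (c) it is $R(T)$-linear from $\mathcal{G}_{e_p}(P)$ to $A$, since $\triangleright_0$ on $\mathcal{H}(A)$ recovers the original action.
\end{enumerate}
Hence $\mathcal{G}_{e_p}(P)$ is projective.
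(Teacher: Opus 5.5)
Your proposal is correct and follows the paper's route. For necessity you apply $\mathcal{H}$, lift using projectivity of $P$, and translate the lift by $h(e_p)$ so that it preserves base points; this is exactly the paper's $\psi=\tau_{a_0}^{0_A}\circ\eta$. For sufficiency the paper says only ``direct by applying $\mathcal{H}$''. Your argument (epimorphisms in $T\textbf{-HMod}$ are surjective, then retract at $e_p$, at $g(e_p)$ and at a preimage $m$, and use your criterion (c)) correctly supplies the details that line leaves implicit, as does your explicit treatment of base-point independence and of $P=\varnothing$.
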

\begin{proof}
Necessity: Assume that $P$ is a projective heap of $T$-modules and fix an element $e_p\in P$, $\mathcal{G}_{e_p}(P)$ can be naturally regarded as an $R(T)$-module. Now let $\pi: A \to B$ be an epimorphism in $T\textbf{-Mod}_{\bullet}$ and $\varphi :\mathcal{G}_{e_p}(P) \to B$ a morphism in $T\textbf{-Mod}_{\bullet}$.
Now we only to construct a morphism $\psi :\mathcal{G}_{e_p}(P) \to A$ in $T\textbf{-Mod}_{\bullet}$ such that $\pi \circ \psi = \varphi$.

Consider the functor $\mathcal{H}$ described in \cite[Remark 2.16]{mb}. Since $\pi$ is an epimorphism in $T\textbf{-Mod}_{\bullet}$, the underlying set map of $\pi$ is surjective, consequently $\mathcal{H}(\pi) : \mathcal{H}(A) \to \mathcal{H}(B)$ is an epimorphism in $T\textbf{-HMod}$.
Moreover, by \cite[Remark 2.16]{mb}, there exists a natural isomorphism $\mathcal{H}(\mathcal{G}_{e_p}(P)) \cong P$. Thus, we obtain a morphism $\mathcal{H}(\varphi) : P \to \mathcal{H}(B)$ in $T\textbf{-HMod}$.

As $P$ is projective in $T\textbf{-HMod}$, there exists a morphism $\eta : P \to \mathcal{H}(A)$ such that the following diagram commutes in $T\textbf{-HMod}$.\\
$$ \xymatrix{ & P \ar[dl]_{\eta} \ar[d]^{\mathcal{H}(\varphi)} & \\
  \mathcal{H}(A) \ar[r]_{\mathcal{H}(\pi)} & \mathcal{H}(B) }$$ \\

Let $a_0 = \eta(e_p) \in \mathcal{H}(A)$ and $0_A$ the zero element of the pointed $T$-module $A$. Define the translation automorphism $\tau_{a_0}^{0_A} : \mathcal{H}(A) \to \mathcal{H}(A)$ by $\tau_{a_0}^{0_A}(x) = [x, a_0, 0_A] = x - a_0 + 0_A,$ which is an isomorphism of heaps of $T$-modules.

Now construct $\psi = \mathcal{G}(\eta) = \tau_{a_0}^{0_A} \circ \eta$. Since $\psi(t\rhd m) = \tau_{a_0}^{0_A} \circ \eta(t\rhd_{e_p} m) = t\rhd_{\tau_{a_0}^{0_A} \circ \eta(e_p)} \tau_{a_0}^{0_A} \circ \eta(m) = t\rhd_{0_A} \psi(m) = t\rhd \psi(m) $, for all $t\in T$, $m\in M$, $\psi$ is a homomorphism in $T\textbf{-Mod}_{\bullet}$. Next we will show that $\pi \circ \psi = \varphi $. As
$$ \mathcal{H}(\pi) \circ \mathcal{H}(\psi) (p) = \mathcal{H}(\pi) \circ \tau_{a_0}^{0_A} \circ \eta(p) = \tau_{\mathcal{H}(\pi)(a_0)}^{0_B} \circ \mathcal{H}(\pi) \circ \eta(p) = \tau_{\mathcal{H}(\pi)(a_0)}^{0_B} \circ \mathcal{H}(\varphi)(p),$$
for all $p \in \mathcal{G}_{e_p}(P)$ and $\mathcal{H}(\pi)(a_0) = \mathcal{H}(\pi) \circ \eta(e_p) = \mathcal{H}(\varphi)(e_p) = \varphi(e_p) = 0_B$ (which means $\tau_{\mathcal{H}(\pi)(a_0)}^{0_B} = \tau_{0_B}^{0_B} = Id_{\mathcal{H}(B)}$), $\mathcal{H}(\pi) \circ \mathcal{H}(\psi) = \mathcal{H}(\varphi)$, i.e. $\pi \circ \psi = \varphi$. And hence by \cite[Theorem 3.1]{mb}, $\mathcal{G}_{e_p}(P)$ is a projective $R(T)$-module.

Sufficiency: It's direct by applying $\mathcal{H}$.
\end{proof}

\begin{example}
\begin{enumerate}
    \item[(i)] Let $T$ be $T(\mathbb{Z}[\sqrt{-5}])$ and $R(T) \cong G(T,0) \times \mathbb{Z}$, where $0$ is the zero element of the ring $\mathbb{Z}[\sqrt{-5}]$ (in fact, if we choose $e\neq 0$, we have $G(T,e) \times \mathbb{Z} \cong G(T,0) \times \mathbb{Z}$), then $(2,1+\sqrt{-5}) \times \mathbb{Z}$ is a projective $R(T)$-module, and so a projective heap of $T$-modules by Theorem 3.4;
    \item[(ii)] Let $k$ be a field, $T = T(M_n(k))$ and $R(T) \cong G(T,0) \times \mathbb{Z}$, where $0$ is the zero element of the ring $M_n(k)$ (in fact, if we choose $e\neq 0$, we have $G(T,e) \times \mathbb{Z} \cong G(T,0) \times \mathbb{Z}$), then $k^n \times \mathbb{Z}$ is a projective $R(T)$-module, and so a projective heap of $T$-modules by Theorem 3.4.
\end{enumerate}

\end{example}
The following lemma implies that an $R(T)_u$-module can be naturally regarded as an $R(T)$-module and a free $R(T)_u$-module is a free $R(T)$-module. The converse holds.
\begin{lemma}
The category of unital modules over $R(T)_u$ is isomorphic to the category of $R(T)$-modules. A set is a free $R(T)_u$-module if and only if it's a free $R(T)$-module.
\end{lemma}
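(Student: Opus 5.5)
The plan is to exploit the standard isomorphism between unital modules over a Dorroh extension and arbitrary (possibly non-unital, but here the paper's $R(T)$-modules) modules over the underlying ring. Recall that $R(T)_u = R(T)\times\mathbb{Z}$ as abelian groups, with multiplication $(r,n)(s,m) = (rs + ns + mr, nm)$ and unit $(0,1)$. Given a unital $R(T)_u$-module $M$, I would restrict scalars along the ring map (non-unital embedding) $R(T)\to R(T)_u$, $r\mapsto (r,0)$, to obtain an $R(T)$-module. Conversely, given an $R(T)$-module $N$, I would define $(r,n)\cdot x = rx + nx$, where $nx$ is the $\mathbb{Z}$-multiple in the underlying abelian group. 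A direct check shows that this is an associative action which distributes over addition, and that $(0,1)$ acts as the identity.

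The key steps, in order, are as follows.
\begin{enumerate}
\item Verify that both assignments are functorial. An additive map is $R(T)$-linear if and only if it is $R(T)_u$-linear, because $\mathbb{Z}$-linearity is automatic for group homomorphisms.
\item Verify that the two constructions are mutually inverse on objects. Starting from a unital $R(T)_u$-module, the reconstructed action agrees with the original one by unitality: $(r,n)x = (r,0)x + n\,(0,1)x$. Starting from an $R(T)$-module, restriction visibly returns the original action.
\end{enumerate}
These two steps yield an isomorphism (not merely an equivalence) of categories which is the identity on underlying abelian groups and on morphisms.

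For the freeness statement, I would read ``free'' as the property of being free on a given basis $X$, that is, satisfying the universal property relative to the forgetful functor to \textbf{Set}. I expect this to be the main obstacle. In the category of all $R(T)$-modules, where no unitality is imposed, the free object on $X$ is $\bigoplus_{x\in X}(R(T)\times\mathbb{Z})$, namely $R(T)\oplus\mathbb{Z}$ on each generator, and not $\bigoplus_X R(T)$. This matches exactly $\bigoplus_X R(T)_u$ as a unital $R(T)_u$-module.

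So I would argue directly via universal properties. The isomorphism of categories commutes with the forgetful functors to \textbf{Set}, since underlying sets and maps are unchanged. An isomorphism of categories preserves left adjoints of compatible forgetful functors, so $F$ is free on $X$ as a unital $R(T)_u$-module if and only if it is free on $X$ as an $R(T)$-module. Concretely, a set map $X\to N$ extends uniquely to an $R(T)_u$-linear map out of $\bigoplus_X R(T)_u$, and the same map is the unique $R(T)$-linear extension.

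If the paper instead intends $R(T)$-modules to be unital (when $T$ is unital), I would add a remark. In that case one composes with the splitting $R(T)_u\cong R(T)\times\mathbb{Z}$ as rings, and freeness must then be interpreted relative to that category. The universal-property argument above is what I would commit to.
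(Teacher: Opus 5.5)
Your proposal is correct and follows the paper's proof essentially verbatim. You use the same functors, namely $(r,n)\ast x = rx + nx$ and restriction along $r\mapsto (r,0)$, check that they are mutually inverse and the identity on underlying groups and morphisms, and deduce the freeness claim from compatibility with the forgetful functors to \textbf{Set}. Your reading of $R(T)$-modules as not necessarily unital is the one the paper uses, and it is the one under which the first claim is actually true, so the closing hedge about the unital case is not needed.
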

\begin{proof}
{It follows from \cite[Proposition 2.11]{mb} that we have an isomorphism of unital rings $R(T)_u \cong R(T_u)$. By \cite[Theorem 3.1]{mb}, there exists an isomorphism of categories between the category of pointed $T_u$-modules $T_u\textbf{-Mod}_{\bullet}$ and the category of $R(T_u)$-modules $R(T_u)\textbf{-Mod}$. Combining these we obtain a category isomorphism:
$$T_u\textbf{-Mod}_{\bullet} \cong R(T_u)\textbf{-Mod} \cong R(T)_u\textbf{-Mod}.$$

Now construct an isomorphism of categories between $R(T)\textbf{-Mod}$ and $R(T)_u\textbf{-Mod}$.
Define a functor $F: R(T)\textbf{-Mod} \rightarrow R(T)_u\textbf{-Mod}$ as follows. For an $R(T)$-module $(M, \cdot)$, we endow the same abelian group $M$ with an $R(T)_u$-module structure by
$$ (r, n) \ast m := r \cdot m + n \cdot m, \forall (r, n) \in R(T)_u = R(T) \oplus \mathbb{Z}, m \in M, $$
where $n \cdot m$ denotes the $n$-fold sum of $m$. This is well-defined. It suffices to show that $((r, n)(s, m)) \ast x = (r, n) \ast ((s, m) \ast x) $. Note that for $(r, n), (s, m) \in R(T)_u$ and $x \in M$,
\begin{align*}
((r, n)(s, m)) \ast x &= (rs + ns + mr, nm) \ast x \\
&= (rs + ns + mr) \cdot x + (nm) \cdot x \\
&= r \cdot (s \cdot x) + n \cdot (s \cdot x) + m \cdot (r \cdot x) + (nm) \cdot x
\end{align*}
and
\begin{align*}
(r, n) \ast ((s, m) \ast x) &= (r, n) \ast (s \cdot x + m \cdot x) \\
&= r \cdot (s \cdot x) + r \cdot (m \cdot x) + n \cdot (s \cdot x) + n \cdot (m \cdot x) \\
&= r \cdot (s \cdot x) + m \cdot (r \cdot x) + n \cdot (s \cdot x) + (nm) \cdot x,
\end{align*}
which are equal because $r \cdot (m \cdot x) = m \cdot (r \cdot x)$. The unit $(0,1)$ acts as the identity: $(0,1) \ast x = 0 \cdot x + 1 \cdot x = x$.

For a morphism $f: M \to N$ of $R(T)$-modules, set $F(f) = f$. Since $f$ is $\mathbb{Z}$-linear and commutes with the $R(T)$-action, it also commutes with the $\ast$-action, and hence $F(f)$ is a morphism of $R(T)_u$-modules.

Conversely, consider a ring homomorphism $j: R(T) \rightarrow R(T)_u$ with $r\mapsto (r,0)$. For any $R(T)_u$-module $M$, we define an $R(T)$-module structure on the same abelian group:
$$ r \cdot m := j(r) \cdot_{R(T)_u} m, \forall r \in R(T), m \in M,$$
where $\cdot_{R(T)_u}$ denotes the original $R(T)_u$-action. Now we define a functor $G: R(T)_u\textbf{-Mod} \rightarrow R(T)\textbf{-Mod}$, for an $R(T)_u$-module $(N, \ast)$, $G(N)$ is $N$ with the $R(T)$-action $r \cdot n = (r, 0) \ast n$.

Now we claim that $F$ and $G$ are inverse to each other. For any $R(T)$-module $M$, $G(F(M))$ is $M$ with action $r \cdot' m = (r,0) \ast m = r \cdot m + 0 \cdot m = r \cdot m$, so $G \circ F = \mathrm{Id}_{R(T)\textbf{-Mod}}$. For an $R(T)_u$-module $N$, $F(G(N))$ is $N$ with action $(r, n) \ast' n' = r \cdot n' + n \cdot n'$, where $r \cdot n' = (r,0) \ast n'$. Since $(0,1)$ acts as the identity, we have $(r, n) \ast n' = (r,0) \ast n' + (0, n) \ast n' = r \cdot n' + n \cdot ((0,1) \ast n') = r \cdot n' + n \cdot n'$. Hence $\ast' = \ast$, so $F \circ G = \mathrm{Id}_{R(T_u)\textbf{-Mod}}$.

Therefore, $F$ and $G$ are isomorphic, and the category of unital modules over $R(T)_u$ is isomorphic to the category of $R(T)$-modules.
From the proof above, the isomorphism between $R(T)_u\textbf{-Mod}$ and $R(T)\textbf{-Mod}$ is compatible with the forgetful functors to $\textbf{Set}$, which means a free $R(T)_u$-module is a free $R(T)$-module, and the converse holds.}
\end{proof}
\begin{theorem}\label{th2}
A heap of $T$-modules is projective if and only if it is a coproduct summand of a free heap of $T$-modules.
\end{theorem}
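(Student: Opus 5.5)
We reduce everything to ordinary module theory over $R(T)$ using Theorem \ref{th1} and the functors $\mathcal{G}$, $\mathcal{H}$. First we note what a free heap of $T$-modules looks like after retraction. $F_{TH}(X)=\mathcal{H}(\bigoplus_{x\in X}R(T)_u)$, and $\mathcal{G}\mathcal{H}(G;e)\cong G$ via a translation. So for any base point $e$, $\mathcal{G}_e(F_{TH}(X))\cong\bigoplus_{x\in X}R(T)_u$. By the preceding lemma, this is a free $R(T)$-module, hence projective. By Theorem \ref{th1}, free heaps of $T$-modules are therefore projective.

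\emph{Sufficiency.} Suppose $P$ is a coproduct summand of a free heap $F$. We read this as meaning that there exist a heap of $T$-modules $Q$ and an isomorphism $F\cong P\sqcup Q$ in $T\textbf{-HMod}$, where $\sqcup$ is the coproduct of Construction \ref{xu}. Let $\iota_P:P\to F$ be the coproduct injection. Define $\rho:F\to P$ by the universal property, using $\mathrm{id}_P$ on $P$ and the constant morphism $Q\to P$ at a fixed point $e\in P$. Constant maps are morphisms of heaps of $T$-modules, since $t\rhd_e e=e$. Then $\rho\iota_P=\mathrm{id}_P$, so $P$ is a retract of $F$. If $P=\varnothing$, it is initial and hence trivially projective, so assume $P\neq\varnothing$. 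Given an epimorphism $f:M\to N$ and a morphism $g:P\to N$, lift $g\rho$ through $f$ by projectivity of $F$ to some $h$. Then $h\iota_P$ lifts $g$. Alternatively one can argue via $\mathcal{G}$: by Construction \ref{xu}, $\mathcal{G}_e(P\sqcup Q)\cong\mathcal{G}(P)\oplus\mathcal{G}(Q)\oplus F'$, so $\mathcal{G}_e(P)$ is a direct summand of a free $R(T)$-module. Theorem \ref{th1} then applies.

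\emph{Necessity.} Let $P\neq\varnothing$ be projective and fix $e_p\in P$. By Theorem \ref{th1}, $G:=\mathcal{G}_{e_p}(P)$ is a projective $R(T)$-module. By the lemma, it is also a projective unital $R(T)_u$-module. Hence there is an $R(T)_u$-module $K$ with $G\oplus K\cong\bigoplus_{x\in X}R(T)_u$. We want to realise this module decomposition as a heap coproduct $P\sqcup Q\cong F_{TH}(X)$ for a suitable $Q$. Construction \ref{xu} with $I=\{1,2\}$ and $i_0=1$ gives $\mathcal{G}(P\sqcup Q)=G\oplus\mathcal{G}(Q)\oplus R(T)_u b_2$. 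So we need a pointed module $L$ with $G\oplus L\oplus R(T)_u\cong$ free. An Eilenberg swindle settles this: take $L=K\oplus\bigoplus_{\mathbb{N}}R(T)_u$. Then $G\oplus L\oplus R(T)_u\cong(\bigoplus_X R(T)_u)\oplus\bigoplus_{\mathbb{N}}R(T)_u$, which is free. Setting $Q=\mathcal{H}(L)$, we obtain $P\sqcup Q\cong\mathcal{H}(\text{free})=F_{TH}(X')$, using $\mathcal{H}\mathcal{G}(P)=P$. The empty heap is the coproduct of the empty family, and we count it as a summand of $F_{TH}(\varnothing)$ or treat it separately.

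The main obstacle is the necessity direction. The extra free summand $F$ in Construction \ref{xu} prevents a naive direct-sum argument, and the swindle above is needed to absorb it. One must also check carefully that $\mathcal{H}$ of a direct sum decomposition containing this extra generator really is the heap coproduct. This amounts to verifying that the maps $v_i$ of Construction \ref{xu} correspond under $\mathcal{H}$ to the decomposition we choose.
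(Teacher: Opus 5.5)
Your proof is correct, and it takes a more direct route than the paper.

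\textbf{The paper's route.} The paper works through the equivalence $\Phi'$ of Corollary~\ref{zzz} between $T\textbf{-HMod}$ and the slice category over $R(T)_u$. Under it, $P\mapsto(\mathcal{G}_{e_p}(P)\oplus R(T)_u,\pi'_P)$, and coproducts become direct sums. The paper then rewrites $\bigoplus_i(F_{RT,i}\oplus R(T)_u)$ as $\Phi'(P)\oplus(Q\oplus R(T)_u^{I\setminus i_0},\pi_Q)$, pairing one copy of $R(T)_u$ with $\mathcal{G}_{e_p}(P)$. It pulls this back along the quasi-inverse $\Psi'$. Sufficiency is read off from the same decomposition together with Theorem~\ref{th1}.

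\textbf{Your route.} You never leave $T\textbf{-HMod}$. For necessity you use only Construction~\ref{xu} and $\mathcal{H}\mathcal{G}(P)=P$, absorbing the extra generator directly. For sufficiency you use a purely categorical retract argument. It works because constant maps are morphisms, which holds since (HM2) with $n=m$ gives $t\triangleright_m m=m$.

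\textbf{Comparison.} Your route avoids two points the paper leaves unverified: that the complementary object $(Q\oplus R(T)_u^{I\setminus i_0},\pi_Q)$ lies in the essential image of $\Phi'$, and that the module decomposition is compatible with the structure maps to $R(T)_u$. Your sufficiency direction also needs nothing beyond the projectivity of free heaps. The slice-category route buys a conceptual uniformity instead. The asymmetric coproduct of Construction~\ref{xu}, with its extra free generators, becomes an ordinary direct sum there, so the bookkeeping of the copies of $R(T)_u$ is built into the formalism.

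\textbf{Two simplifications.} First, the Eilenberg swindle is unnecessary: $L=K$ already works, since $G\oplus K\oplus R(T)_u\cong\bigoplus_{X\sqcup\{\ast\}}R(T)_u$ is free. Second, your closing worry about matching the injections $v_i$ is moot. Being a coproduct summand only requires an isomorphism of objects $P\sqcup Q\cong F_{TH}(X')$. Construction~\ref{xu} identifies the coproduct object as $\mathcal{H}(\mathcal{G}(P)\oplus\mathcal{G}(Q)\oplus R(T)_u)$ for any choice of base points, and $\mathcal{H}$ preserves isomorphisms.
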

\begin{proof}
{Necessity: Assume that $P$ is a projective heap of $T$-modules. By Corollary \ref{zzz}, we obtain an equivalent functor $\Phi : T\textbf{-HMod} \rightarrow T \textbf{-Mod}_{\bullet}\twoheaddownarrow \mathrm{R}(T)_{u}$. From \cite[Theorem 4.14]{mb}, $\Phi(P) = (\mathcal{G}_{\star}(P \sqcup \ast),\pi_P) \cong (\mathcal{G}_{e_p}(P) \oplus \mathrm{R}(T)_{u},\pi'_P)$, where $\pi_P:\mathcal{G}_{\star}(P \sqcup \ast)\rightarrow R(T)_u$, $\pi'_P:\mathcal{G}_{e_p}(P) \oplus \mathrm{R}(T)_{u}\rightarrow R(T)_u$, $\star$ and $\ast $ are singleton sets. Define an equivalent functor $\Phi' : T\textbf{-HMod} \rightarrow T \textbf{-Mod}_{\bullet}\twoheaddownarrow \mathrm{R}(T)_{u}$ with $\Phi'(P) = (\mathcal{G}_{e_p}(P) \oplus \mathrm{R}(T)_{u},\pi'_P)$. Denote its quasi-inverse functor by $\Psi'$. Since $R(T)\textbf{-Mod}$ is an abelian category, the projective module $\mathcal{G}_{e_p}(P)$ must be a direct summand of a series of free $R(T)$-module $\{F_{RT,i}\}_{i \in I}$, which means that there exists an $R(T)$-module $Q$ such that $ \mathcal{G}_{e_p}(P) \oplus Q \cong \oplus_{i\in I}F_{RT,i}$. For each free $R(T)$-module $F_{RT,i}$, there exists a corresponding free heap of $T$-modules $F_i = \mathcal{H}(F_{RT,i}) \cong \Psi'(F_{RT,i} \oplus R(T)_u, \pi_{i})$, where $\pi_{i}: F_{RT,i} \oplus R(T)_u\rightarrow R(T)_u$. Since left adjoint functors preserve colimits, equivalent functors $\Phi'$ and $\Psi'$ preserve coproduct,
$$\Phi'(\bigsqcup_{i \in I}F_i) = \bigoplus_{i \in I}(F_{RT,i} \oplus R(T)_u, \pi_{i})
 \cong (\bigoplus_{i \in I}(F_{RT,i} \oplus R(T)_u), \sum_{i \in I}\pi_{i}),$$
where
\begin{align*}
\bigoplus_{i \in I}(F_{RT,i} \oplus R(T)_u) &\cong (\mathcal{G}_{e_p}(P) \oplus Q) \oplus (\bigoplus_{i \in I}R(T)_u)\\
 &= (\mathcal{G}_{e_p}(P) \oplus R(T)_u) \oplus (Q \oplus R(T)_u^{I/i_0}).
\end{align*}
So
\begin{align*}\Phi'(\bigsqcup_{i \in I}F_i) &\cong (\mathcal{G}_{e_p}(P) \oplus R(T)_u, \pi_{\mathcal{G}_{e_p}(P)}) \oplus (Q \oplus R(T)_u^{I/i_0}, \pi_Q)\\
 &= \Phi'(P) \oplus (Q \oplus R(T)_u^{I/i_0}, \pi_Q),\end{align*}
where $\pi_{\mathcal{G}_{e_p}(P)}: \mathcal{G}_{e_p}(P) \oplus R(T)_u\rightarrow R(T)_u$, $\pi_Q: Q \oplus R(T)_u^{I/i_0}\rightarrow R(T)_u$. This implies that $\Phi'(P)$ is a direct summand of $\Phi'(\bigsqcup_{i \in I}F_i)$, i.e. $P \cong \Psi'(\Phi'(P))$ is a coproduct summand of $ \Psi'(\Phi'(\bigsqcup_{i \in I}F_i)) \cong \bigsqcup_{i \in I}F_i$.

Sufficiency: Assume that a heap of $T$-module $P$ is a coproduct summand of a series of free heaps of $T$-modules $\{F_i\}_{i \in I}$, i.e. there exists $P'$ such that $\bigsqcup_{i \in I}F_i \cong P \sqcup P'$, then we have
$$\Phi'(\bigsqcup_{i \in I}F_i) \cong \Phi'(P) \oplus \Phi'(P').$$
For each free heap of $T$-modules $F_i$, $\Psi'(F_i) = (\mathcal{G}_{e_i}(F_i) \oplus R(T)_u,\pi_i) = (F_{RT,i} \oplus R(T)_u,\pi_i)$, where $F_{RT,i}$ is a free $R(T)$-module by \cite[Proposition 3.4]{mb}, so
$$\Psi'(\bigsqcup_{i \in I}F_i) = \bigoplus_{i \in I}(F_{RT,i} \oplus R(T)_u,\pi_i) \cong (\bigoplus_{i \in I}(F_{RT,i} \oplus R(T)_u),\sum_{i \in I}\pi_i).$$
Since $\Phi'(\bigsqcup_{i \in I}F_i) \cong \Phi'(P) \oplus \Phi'(P') \cong (\bigoplus_{i \in I}(F_{RT,i} \oplus R(T)_u),\sum_{i \in I}\pi_i)$, $\mathcal{G}_{e_p}(P)$ is a direct summand of a series of free $R(T)$-modules ($R(T)_u$ is a free $R(T)$-module by Proposition 3.6), and hence $\mathcal{G}_{e_p}(P)$ is a projective $R(T)$-module. So $P$ is a projective heap of $T$-modules by Theorem \ref{th1}.
}
\end{proof}

\begin{proposition}\label{th3}\rm
Let $P$ be a heap of $T$-modules. The following conditions are equivalent.
\begin{enumerate}
    \item[\rm(i)] $P$ is projective;
    \item[\rm(ii)] The functor $\operatorname{Hom}_{TH}(P,-): T\textbf{-HMod} \rightarrow \textbf{Ah}$ preserves epimorphisms;
    \item[\rm(iii)] Every short exact sequence $K \stackrel{\tau}{\hookrightarrow} N \stackrel{\pi}{\twoheadrightarrow} P$ splits, i.e. there exists a section morphism $s: P\rightarrow N$ of $\pi$.
\end{enumerate}
\end{proposition}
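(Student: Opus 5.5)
We prove (i)$\Rightarrow$(ii)$\Rightarrow$(i), and (i)$\Rightarrow$(iii)$\Rightarrow$(i). The first point to settle is what an epimorphism in $T\textbf{-HMod}$ and in $\textbf{Ah}$ is: it is a surjective morphism. For $T\textbf{-HMod}$ this follows from the isomorphism with affine $R(T)$-modules, or by noting that for a non-empty codomain $N$, the cokernel-type quotient $N/f(M)$ detects non-surjectivity. For $\textbf{Ah}$, $\operatorname{Hom}_{TH}(P,N)$ is an abelian heap under the pointwise operation $[g_1,g_2,g_3](p)=[g_1(p),g_2(p),g_3(p)]$. This is a morphism of heaps of $T$-modules by Lemma \ref{le.se}(4) together with the base change property (HM2). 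With this understood, (ii) says exactly that for every surjection $f:M\to N$ the map $f_*:\operatorname{Hom}_{TH}(P,M)\to\operatorname{Hom}_{TH}(P,N)$, $h\mapsto f\circ h$, is surjective. Surjectivity of $f_*$ is precisely Definition \ref{proj}, so (i)$\Leftrightarrow$(ii) is immediate once the epimorphism identification is justified. The degenerate case $P=\varnothing$ is trivial throughout, so we assume $P\neq\varnothing$.

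For (i)$\Rightarrow$(iii), we take a short exact sequence $K\hookrightarrow N\stackrel{\pi}{\twoheadrightarrow} P$. By definition $\pi$ is surjective, hence an epimorphism. Applying projectivity of $P$ to $\pi$ and to $g=\mathrm{id}_P$ yields $s:P\to N$ with $\pi s=\mathrm{id}_P$, which is the required section.

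For (iii)$\Rightarrow$(i), we present $P$ as a quotient of a free object and use Theorem \ref{th2}. Fix $e_p\in P$ and choose a surjection of $R(T)$-modules $\oplus_{x\in X}R(T)\to\mathcal{G}_{e_p}(P)$. Applying $\mathcal{H}$ and the identification $\mathcal{H}\mathcal{G}_{e_p}(P)=P$ gives a surjective morphism $\pi:N\to P$ in $T\textbf{-HMod}$, where $N$ is a free heap of $T$-modules. One could instead use the counit $F_{TH}(P)\to P$ of the free functor; this is preferable if the free objects in the paper are the $F_{TH}(X)$. Put $K=\pi^{-1}(e_p)$. This is a sub-heap of $T$-modules, since $e_p$ is fixed by $\rhd_{e_p}$ and $\pi$ preserves $\rhd$. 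Hence $K\hookrightarrow N\twoheadrightarrow P$ is exact at $e_p$, i.e.\ short exact. By (iii) there is a section $s$ with $\pi s=\mathrm{id}_P$.

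The remaining task, and the main obstacle, is to turn a split epimorphism onto $P$ from a projective object into projectivity of $P$. Here we do not need a coproduct decomposition: retracts of projectives are projective in any category. Given a surjection $f:M\to L$ and $g:P\to L$, apply projectivity of $N$ to $g\pi:N\to L$. This gives $h':N\to M$ with $fh'=g\pi$, and then $h=h's$ satisfies $fh=g\pi s=g$. So we only need $N$ to be projective. This holds by Theorem \ref{th1}, since $\mathcal{G}(N)$ is a free, hence projective, $R(T)$-module; alternatively it holds by Theorem \ref{th2}. The delicate point is checking that the chosen $N$ really has $\mathcal{G}$ free over $R(T)$ for the basepoint used. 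This we handle via the preceding lemma identifying free $R(T)_u$- and $R(T)$-modules, together with the independence of the basepoint in Theorem \ref{th1}.
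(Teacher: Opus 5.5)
Your argument is correct. For (i)$\Leftrightarrow$(ii) and (i)$\Rightarrow$(iii) it is essentially the paper's; the paper runs (i)$\Rightarrow$(ii)$\Rightarrow$(iii), applying (ii) to $\pi$ and $\mathrm{Id}_P$. Your explicit identification of epimorphisms with surjective morphisms is a welcome addition, since the paper uses it silently.

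The genuine difference is in (iii)$\Rightarrow$(i). The paper stays inside $T\textbf{-HMod}$. Given a surjection $g\colon M\to N$ and $\phi\colon P\to N$, it forms the pullback $Q=\{(m,p)\in M\times P\mid g(m)=\phi(p)\}$ and observes that $\pi_P\colon Q\to P$ is surjective. It then applies (iii) to $\pi_P^{-1}(e)\hookrightarrow Q\twoheadrightarrow P$ and takes $\pi_M\circ s$ as the lift. You instead apply (iii) once, to a presentation $N\twoheadrightarrow P$ by a free heap, and conclude because retracts of projectives are projective. The pullback argument is self-contained and needs neither a supply of projective objects nor Theorem \ref{th1}. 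Your route uses (iii) for a single sequence, so it gives the slightly sharper statement that $P$ is projective as soon as one free presentation splits. The price is that it leans on the dictionary with $R(T)$-modules.

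On that dictionary, one correction: taking $N=F_{TH}(X)=\mathcal{H}(\bigoplus_{x\in X}R(T)_u)$ is not merely preferable but necessary. The category $R(T)\textbf{-Mod}\cong T\textbf{-Mod}_{\bullet}$ contains non-unital modules. An example is a nonzero abelian group with zero $T$-action, which gives the heap of modules with $t\rhd_m n=m$. For such a $\mathcal{G}_{e_p}(P)$, any map $\bigoplus R(T)\to\mathcal{G}_{e_p}(P)$ has image in $R(T)\,\mathcal{G}_{e_p}(P)=0$. Moreover, when $R(T)$ is not unital, $\bigoplus R(T)$ need not even be projective. Instead, use $\bigoplus_{q\in P}R(T)_u\to\mathcal{G}_{e_p}(P)$ with $b_q\mapsto q$. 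This is a surjection whose source is free in the sense of the lemma preceding Theorem \ref{th2}, and no counit or adjunction is needed.
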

\begin{proof}
{$\rm (i)\Rightarrow (ii)$: Assume that $P$ is a projective heap of $T$-modules and let $g: M \rightarrow N$ be an epimorphism. Take any $\phi \in \operatorname{Hom}_{TH}(P,N)$, since $P$ is projective, there exists $\psi \in \operatorname{Hom}_{TH}(P,M)$ such that $g \circ \psi = \phi$, i.e. $\operatorname{Hom}_{TH}(P,g)(\psi)=\phi$, which implies $\operatorname{Hom}_{TH}(P,g)$ is surjective and the functor $\operatorname{Hom}_{TH}(P,-): T-HMod \rightarrow Ah$ preserves epimorphisms.

$\rm(ii)\Rightarrow (iii)$: Consider a short exact sequence $K \stackrel{\tau}{\hookrightarrow} N \stackrel{\pi}{\twoheadrightarrow} P$. Since $\operatorname{Hom}_{TH}(P,-): T-HMod \rightarrow Ah$ preserves epimorphisms, $\operatorname{Hom}_{TH}(P,\pi): \operatorname{Hom}_{TH}(P,N) \rightarrow \operatorname{Hom}_{TH}(P,P)$ is epic, and hence there exists $s \in \operatorname{Hom}_{TH}(P.N)$ such that $\pi \circ s = {\rm Id}_P$.

$\rm(iii)\Rightarrow (i)$: Let $g: M \to N$ be an epimorphism and $\phi :P \to N$ arbitary morphism. Construct the pullback $Q = \{(m,p) \in M \times P~|~g(m) = \phi(p)\}$.\\
$$ \xymatrix{ Q \ar[d]_{\pi_M} \ar[r]^{\pi_P} & P \ar[d]_{\phi} \\
  M \ar[r]_{g} & N }$$ \\
Since $g$ is an epimorphism, there exists $m\in M$ such that $g(m) = \phi(p)$, $(m,p) \in Q$ for all $p \in P$, i.e. $\pi_p$ is epic.

Consider $e \in P$, let $K = \pi_P^{-1}(e)$. Then we have a short exact sequence
$$ K \stackrel{\tau}{\hookrightarrow} Q \stackrel{\pi_P}{\twoheadrightarrow} P. $$
By $\rm(iii)$, there exists $s: P\rightarrow Q$ such that $\pi_P \circ s = Id_p$. Define $\psi = \pi_M \circ s: P \rightarrow M$, then
$$ g \circ \psi = g \circ \pi_M \circ s = \phi \circ \pi_P \circ s = \phi \circ Id_P = \phi.$$
Therefore, $P$ is projective.
}
\end{proof}

\begin{proposition}
The heaps of $T$-modules sequence $\bigsqcup_{i=1}^{n_1}M_i\hookrightarrow \bigsqcup_{i=1}^{n_2}M'_i \twoheadrightarrow \bigsqcup_{i=1}^{n_3}M''_i$ is exact if the $R(T)$-modules sequence $\bigoplus_{i=1}^{n_1}\mathcal{G}_{e_i}(M_i)\hookrightarrow \bigoplus_{i=1}^{n_2}\mathcal{G}_{e'_i}(M'_i) \twoheadrightarrow \bigoplus_{i=1}^{n_3}\mathcal{G}_{e''_i}(M''_i)$ is exact and $n_1 + n_3 = n_2+1$, where $e_i \in M_i, e'_i \in M'_i, e''_i \in M''_i$, $n_1, n_2, n_3\in N$.
\end{proposition}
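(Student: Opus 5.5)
The plan is to move everything to pointed modules. Under $\mathcal{G}$, a finite coproduct of heaps becomes a direct sum with extra free summands, by Construction \ref{xu}. After that, exactness in $T\textbf{-HMod}$ (in the sense of \cite[Definition 5.1]{mb}) can be read off from exactness of $R(T)$-modules.

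\textbf{Step 1: compute $\mathcal{G}$ of the coproducts.} By Construction \ref{xu}, with base point $v_{i_0}(e_{i_0})$,
\[
\mathcal{G}\Bigl(\bigsqcup_{i=1}^{n}M_i\Bigr)\cong\Bigl(\bigoplus_{i=1}^{n}\mathcal{G}_{e_i}(M_i)\Bigr)\oplus F_{n-1},
\]
where $F_{n-1}$ is the free pointed $T$-module on the $n-1$ symbols $b_i$. Equivalently, this is a free $R(T)$-module of rank $n-1$, using the lemma identifying $R(T)_u$-modules with $R(T)$-modules. Call the three coproducts $C_1,C_2,C_3$. Their associated $R(T)$-modules are
\[
G_1\oplus F_{n_1-1},\qquad G_2\oplus F_{n_2-1},\qquad G_3\oplus F_{n_3-1},
\]
where $G_1\hookrightarrow G_2\twoheadrightarrow G_3$ is the given exact sequence of direct sums.

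\textbf{Step 2: use the rank count.} The hypothesis $n_1+n_3=n_2+1$ gives
\[
(n_1-1)+(n_3-1)=n_2-1 .
\]
So the free parts fit into a split exact sequence
\[
F_{n_1-1}\hookrightarrow F_{n_2-1}\twoheadrightarrow F_{n_3-1},
\]
sending basis symbols injectively and then projecting onto the complementary symbols. Taking the direct sum of this with the given sequence yields an exact sequence of $R(T)$-modules
\[
G_1\oplus F_{n_1-1}\hookrightarrow G_2\oplus F_{n_2-1}\twoheadrightarrow G_3\oplus F_{n_3-1}.
\]

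\textbf{Step 3: transport back with $\mathcal{H}$.} Apply $\mathcal{H}$ and the identification $\mathcal{H}\mathcal{G}(M;e)=M$ from \cite[Remark 2.16]{mb}. This gives morphisms $C_1\to C_2\to C_3$ in $T\textbf{-HMod}$. The first is injective and the second surjective, because $\mathcal{H}$ does not change underlying set maps. The image of the first is the zero fibre of the second, i.e.\ the preimage of the base point $e=0$. That is exactly exactness at $e$ in the sense of \cite[Definition 5.1]{mb}, with $e$ lying in the image of $C_2$.

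\textbf{Main obstacle.} The statement does not name the maps, so I will interpret it as: there exist heap morphisms between the coproducts forming an exact sequence. The construction above produces them. The delicate point is checking that the maps built from the $G_i$ and the free parts are compatible with the coproduct injections $v_i=u_i+b_i$. Since we only need some morphisms of heaps of $T$-modules, I will define them directly on the associated pointed modules, which avoids this issue.

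If instead the heap maps are given in advance and $\mathcal{G}$ of them is assumed exact on the $G$-parts, I would first translate base points so that the maps become pointed. I would then use the rank condition to show that the free symbols $b_i$ are matched injectively and then surjectively. This matching is the step I expect to be hardest to make precise.
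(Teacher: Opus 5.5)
Your proposal is correct and follows the paper's route. The paper's proof simply identifies $\bigsqcup_{i=1}^{n}M_i\cong\mathcal{H}\bigl((\bigoplus_{i=1}^{n}\mathcal{G}_{e_i}(M_i))\oplus R(T)_u^{\,n-1}\bigr)$ via Construction \ref{xu} and calls the rest ``direct''; your rank count $(n_1-1)+(n_3-1)=n_2-1$, the split exact sequence of free summands, and the transport back along $\mathcal{H}$ supply exactly the omitted details (and, like the paper, you implicitly use the reading that suitable heap morphisms exist, together with $n_1,n_3\geq 1$).
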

\begin{proof}
From Construction \ref{xu}, $\bigsqcup_{i=1}^{n}M_i \cong \mathcal{H}((\bigoplus_{i=1}^{n}\mathcal{G}_{e_i}(M_i))\oplus(\bigoplus_{i=1}^{n-1}R(T)_u))$. This consequence is direct.
\end{proof}

\begin{construction}\label{oo}
Consider $\operatorname{Hom}_{Ah}(M,N)$, where $M$, $N$ are abelian heaps. We can naturally endow it with a ternary operation that makes it an abelian heap\\
 $$[-,-,-]:\operatorname{Hom}_{Ah}(M,N)\times \operatorname{Hom}_{Ah}(M,N)\times \operatorname{Hom}_{Ah}(M,N)\rightarrow \operatorname{Hom}_{Ah}(M,N),$$
for all $m \in M,$ $[f_1,f_2,f_3](m)=[f_1(m),f_2(m),f_3(m)]$.

For an abelian heap $N$, let $\operatorname{Tr}(N)$ consist of all translation automorphisms ($\tau^b_a:N \rightarrow N$,   $x \mapsto [x,a,b]_N$ for $a,b\in N$). Since for all $\tau_{a_{1}}^{b_{1}},\tau_{a_{2}}^{b_{2}},\tau_{a_{3}}^{b_{3}} \in \operatorname{Tr}(N),[\tau_{a_{1}}^{b_{1}},\tau_{a_{2}}^{b_{2}},\tau_{a_{3}}^{b_{3}}] = \tau_{[a_{1},a_{2},a_{3}]}^{[b_{1},b_{2},b_{3}]} \in \operatorname{Tr}(N)$, $\operatorname{Tr}(N)$ is a sub-heap of abelian heap $\operatorname{End}_{Ah}(N)$.

Now for an abelian heap $M$, define a relation $\sim_{\operatorname{Tr}(N)}$ on $\operatorname{Hom}_{Ah}(M,N)$ by $f\sim_{\operatorname{Tr}(N)}f' \Leftrightarrow \exists \tau \in \operatorname{Tr}(N)$ such that $f'=\tau f$. Since for all $f,g,h\in \operatorname{Hom}_{Ah}(M,N), f=\tau^a_a f$, $f=\tau^a_b g\Leftrightarrow g=\tau^b_a f$, $h=\tau^d_c g,g=\tau^b_a f\Rightarrow h=\tau^{[b,c,d]}_a f$, $\sim_{\operatorname{Tr}(N)}$ is an equivalent relation. By \cite[Remark 2.1]{mb}, any congruence over an abelian heap is a sub-heap relation. Therefore, we can construct a sub-heap $\operatorname{Hom}^\bullet_{Ah}(M, N):= \operatorname{Hom}_{Ah}(M, N)/\operatorname{Tr}(N)$ of $\operatorname{Hom}_{Ah}(M, N)$.

From the construction, it's obvious that $\operatorname{Hom}^\bullet_{TH}(M, N)\cong \mathcal{H}(\operatorname{Hom}_{R(T)}(\mathcal{G}_{e_{M}}(M), \mathcal{G}_{e_N}(N))$, where $e_M\in M$, $e_N\in N$.
\end{construction}
\section{Gorenstein projective objects in the category $T\textbf{-HMod}$}

The definition of Gorenstein projective objects in the category $T\textbf{-HMod}$ is posed. It is shown that a heap of $T$-modules $M$ is BP Gorenstein projective if and only if $\mathcal{G}_{e_m}(M)$ is a Gorenstein projective $R(T)$-module for all $e_m\in M$. Moreover, we give a functorial description of the BP Gorenstein projective dimension.

\subsection{BasePoint-Compatible sequences}

\begin{definition}
A sequence of heaps of $T$-modules $ \mathcal{L}: \cdots \to M_{i-1} \xrightarrow{f_{i-1}} M_i \xrightarrow{f_i} M_{i+1} \xrightarrow{f_{i+1}} M_{i+2} \to \cdots $ is said to be BasePoint-Compatible if there exist $e_i \in M_i$ such that $f_i(e_i) = e_{i+1} , f_{i-1}(M_{i-1}) = f_i^{-1}(e_{i+1})$
for all $i \geq 2$. In particular, when this sequence of heaps of $T$-modules has only three terms$:M_1\hookrightarrow M_2 \twoheadrightarrow M_3$, the concept of BasePoint-Compatible sequences and the concept of exactness are equivalent.
\end{definition}

\begin{definition}
A sequence of heaps of $T$-modules $ \mathcal{L}: \cdots \to M_{i-1} \xrightarrow{f_{i-1}} M_i \xrightarrow{f_i} M_{i+1} \xrightarrow{f_{i+1}} M_{i+2} \to \cdots$ is said to be a TH-Complex if there exist $e_i \in M_i$ such that $f_i(e_i) = e_{i+1} , f_{i-1}(M_{i-1}) \subseteq f_i^{-1}(e_{i+1})$
for all $i \geq 2$.
\end{definition}

Consider a TH-Complex. If $f_{i-1}(M_{i-1}) = f_i^{-1}(e_{i+1})$, then $H^i(\mathcal{L}):= f_i^{-1}(e_{i+1})/f_{i-1}(M_{i-1}) = \ast \cong \bar{e_i}$. For convenience, we denote it by $e_i$.

\begin{proposition}\label{zs}
Let $ \mathcal{L}: \cdots \to M_{i-1} \xrightarrow{f_{i-1}} M_i \xrightarrow{f_i} M_{i+1} \xrightarrow{f_{i+1}} M_{i+2} \to \cdots$ be a sequence of heaps of $T$-modules.
\begin{enumerate}
    \item[\rm(i)] $\mathcal{G}_{e_\mathcal{L}}(\mathcal{L})$ and $\mathcal{G}_{e_\mathcal{L'}}(\mathcal{L})$ are isomorphic, where $e_\mathcal{L}$ and $e_\mathcal{L'}$ are different base point select of $\mathcal{L}$;
    \item[\rm(ii)] If $\mathcal{L}$ is BasePoint-Compatible, then it's corresponding $R(T)$-module sequence $ \mathcal{G}_{e_\mathcal{L}}(\mathcal{L}): \cdots \to \mathcal{G}_{e_{i-1}}(M_{i-1}) \xrightarrow{f_{i-1}} \mathcal{G}_{e_{i}}(M_{i}) \xrightarrow{f_i} \mathcal{G}_{e_{i+1}}(M_{i+1}) \xrightarrow{f_{i+1}} \mathcal{G}_{e_{i+2}}(M_{i+2}) \to \cdots $ is an exact sequence. Conversely, if $\mathcal{G}$ is an exact $R(T)$-module sequence, then $\mathcal{H(G)}$ is BasePoint-Compatible;
    \item[\rm(iii)] Let $\mathcal{L}$ be a BasePoint-Compatible sequence and $\mathcal{L'}: \cdots \to M'_{i-1} \xrightarrow{f'_{i-1}} M'_i \xrightarrow{f'_i} M'_{i+1} \xrightarrow{f'_{i+1}} M'_{i+2} \to \cdots$ be a sequence of heaps of $T$-modules. If $\mathcal{L}$ and $\mathcal{L'}$ are isomorphic, then $\mathcal{L'}$ is BasePoint-Compatible.
\end{enumerate}
\end{proposition}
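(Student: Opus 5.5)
The plan is to reduce everything to the translation automorphisms $\tau_a^b(x)=[x,a,b]$ and to the identity $\mathcal{G}(\varphi)=\tau_f^{\varphi(e)}\circ\varphi$ from \cite[Remark 2.16]{mb}. We verify the three parts in order, working elementwise.

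For (i), suppose the base points are chosen as $e_i$ and $e'_i$ in each $M_i$. The translation $\tau_{e_i}^{e'_i}\colon \mathcal{G}_{e_i}(M_i)\to\mathcal{G}_{e'_i}(M_i)$ is an isomorphism of pointed $T$-modules. This holds because it is a heap automorphism that sends $e_i$ to $e'_i$, and the base change property (HM2) gives $\tau_{e_i}^{e'_i}(t\rhd_{e_i}n)=t\rhd_{e'_i}\tau_{e_i}^{e'_i}(n)$. We then check that the squares commute. The maps of $\mathcal{G}_{e_\mathcal{L}}(\mathcal{L})$ are $\tau_{e_{i+1}}^{f_i(e_i)}\circ f_i$, or simply $f_i$ when $f_i(e_i)=e_{i+1}$. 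Since every heap morphism intertwines translations, $f_i\circ\tau_a^b=\tau_{f_i(a)}^{f_i(b)}\circ f_i$, and composites of translations are again translations. So both ways around each square are translations composed with $f_i$ that agree at the base point $e_i$, and hence they are equal. The translations therefore assemble into an isomorphism of sequences.

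For (ii), fix base points $e_i$ witnessing BasePoint-Compatibility. Then $f_i(e_i)=e_{i+1}$, so $f_i$ itself is $R(T)$-linear $\mathcal{G}_{e_i}(M_i)\to\mathcal{G}_{e_{i+1}}(M_{i+1})$ and no translation correction is needed. The zero of $\mathcal{G}_{e_{i+1}}(M_{i+1})$ is $e_{i+1}$, so $\ker f_i=f_i^{-1}(e_{i+1})$. The hypothesis $f_{i-1}(M_{i-1})=f_i^{-1}(e_{i+1})$ is then literally $\operatorname{im} f_{i-1}=\ker f_i$, which is exactness. For the converse, given an exact sequence of $R(T)$-modules, apply $\mathcal{H}$ and take $e_i=0_{G_i}$. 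Linearity gives $f_i(0)=0$, and exactness gives $\operatorname{im}=\ker=f_i^{-1}(0)$. By Theorem \ref{xx} these are morphisms of heaps of $T$-modules.

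For (iii), let $\alpha_i\colon M_i\to M'_i$ be isomorphisms with $f'_i\alpha_i=\alpha_{i+1}f_i$, and set $e'_i=\alpha_i(e_i)$. Then $f'_i(e'_i)=\alpha_{i+1}f_i(e_i)=e'_{i+1}$. Using bijectivity, $f'^{-1}_i(e'_{i+1})=\alpha_i\bigl(f_i^{-1}(e_{i+1})\bigr)=\alpha_i f_{i-1}(M_{i-1})=f'_{i-1}\alpha_{i-1}(M_{i-1})=f'_{i-1}(M'_{i-1})$.

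The main obstacle is the bookkeeping in (i): deciding what $\mathcal{G}_{e_\mathcal{L}}(\mathcal{L})$ means when the base points are not compatible, and tracking the translation corrections. I would handle this with the single observation that any two heap morphisms which differ by translations and agree at a point are equal.
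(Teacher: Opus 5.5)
Your proposal is correct and follows the paper's proof essentially step for step: in (i) the vertical maps are the translations $\tau_{e_i}^{e'_i}$ and one checks that the squares commute (you replace the paper's elementwise heap computation with the cleaner observation that two translates of $f_i$ agreeing at one point coincide), in (ii) one unwinds the definitions, and in (iii) the base points are transported as $e'_i=\alpha_i(e_i)$. One small notational slip: with your convention $\tau_a^b(x)=[x,a,b]$, the corrected connecting map is $\tau^{e_{i+1}}_{f_i(e_i)}\circ f_i$ rather than $\tau_{e_{i+1}}^{f_i(e_i)}\circ f_i$ (the same index inversion appears in the paper's quotation of Remark 2.16); your argument implicitly uses the correct direction, so nothing is affected.
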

\begin{proof}
\begin{enumerate}
    \item[(i)] $\mathcal{G}_{e_\mathcal{L}}(\mathcal{L})$ is in the form of $\mathcal{G}_{e_\mathcal{L}}(\mathcal{L}): \cdots \to \mathcal{G}_{e_{i-1}}(M_{i-1}) \xrightarrow{\tau^{e_{i}}_{f_{i-1}(e_{i-1})}f_{i-1}} \mathcal{G}_{e_{i}}(M_{i}) \xrightarrow{\tau^{e_{i+1}}_{f_{i}(e_{i})}f_i} \mathcal{G}_{e_{i+1}}(M_{i+1}) \xrightarrow{\tau^{e_{i+2}}_{f_{i+1}(e_{i+1})}f_{i+1}} \mathcal{G}_{e_{i+2}}(M_{i+2}) \to \cdots$ and $ \mathcal{G}_{e_\mathcal{L'}}(\mathcal{L}): \cdots \to \mathcal{G}_{e'_{i-1}}(M_{i-1}) \xrightarrow{\tau^{e'_{i}}_{f_{i-1}(e'_{i-1})}f_{i-1}} \mathcal{G}_{e'_{i}}(M_{i}) \xrightarrow{\tau^{e'_{i+1}}_{f_{i}(e'_{i})}f_i} \mathcal{G}_{e'_{i+1}}(M_{i+1}) \xrightarrow{\tau^{e'_{i+2}}_{f_{i+1}(e'_{i+1})}f_{i+1}} \mathcal{G}_{e'_{i+2}}(M_{i+2}) \to \cdots$. Now it suffices to prove the diagram below is commutative:
    $$ \xymatrix{\cdots \ar[r] & \mathcal{G}_{e_{i}}(M_{i}) \ar[d]_{\tau^{e'_i}_{e_i}} \ar[r]^{\tau^{e_{i+1}}_{f_{i}(e_{i})}f_i} & \mathcal{G}_{e_{i+1}}(M_{i+1}) \ar[d]_{\tau^{e'_{i+1}}_{e_{i+1}}} \ar[r] & \cdots\\
  \cdots \ar[r] & \mathcal{G}_{e'_{i}}(M_{i}) \ar[r]^{\tau^{e'_{i+1}}_{f_{i}(e'_{i})}f_i} & \mathcal{G}_{e'_{i+1}}(M_{i+1}) \ar[r] & \cdots.} $$
  Since for any $m\in \mathcal{G}_{e_{i}}(M_{i})$,

  \begin{align*}
\tau^{e'_{i+1}}_{f_{i}(e'_{i})}f_i\tau^{e'_i}_{e_i}(m)&=\tau^{e'_{i+1}}_{f_{i}(e'_{i})}f_i([m,e_i,e'_i])\\
&=[[f_i(m),f_i(e_i),f_i(e'_i)],f_{i}(e'_{i}),e'_{i+1}]\\
&=[f_i(m),f_i(e_i),e'_{i+1}]
\end{align*}
and
  \begin{align*}
\tau^{e'_{i+1}}_{e_{i+1}}\tau^{e_{i+1}}_{f_{i}(e_{i})}f_i(m)&=\tau^{e'_{i+1}}_{e_{i+1}}([f_i(m),e_{i+1},f_{i}(e_{i})])\\
&=[[f_i(m),f_{i}(e_{i}),e_{i+1}],e_{i+1},e'_{i+1}]\\
&=[f_i(m),f_i(e_i),e'_{i+1}],
\end{align*}
$\tau^{e'_{i+1}}_{f_{i}(e'_{i})}f_i\tau^{e'_i}_{e_i}=\tau^{e'_{i+1}}_{e_{i+1}}\tau^{e_{i+1}}_{f_{i}(e_{i})}f_i,$ which means the diagram is commutative.
    \item[(ii)] From the relevant definitions, this conclusion is obvious.
    \item[(iii)] Consider the following commutative diagram :\\
    $$ \xymatrix{\mathcal{L}:\cdots \ar[r] & M_{i-1} \ar[d]_{\phi_{i-1}} \ar[r]^{f_{i-1}} & M_{i} \ar[d]_{\phi_i} \ar[r]^{f_{i}} & M_{i+1} \ar[d]_{\phi_{i+1}} \ar[r] & \cdots\\
  \mathcal{L'}:\cdots \ar[r] & M'_{i-1} \ar[r]^{f'_{i-1}} & M'_{i} \ar[r]^{f'_{i}} & M'_{i+1} \ar[r] & \cdots.} $$
Since $\mathcal{L}$ is BasePoint-Compatible, there exists $e_i \in M_i$ such that $f_i(e_i) = e_{i+1}$, $f_{i-1}(M_{i-1})$ $= f_{i}^{-1}(e_{i+1})$ for all $i \geq 2$. It suffices to prove that $\phi_i(e_i) \in M'_i$ satisfy $f'_i(\phi_i(e_i)) = \phi_{i+1}(e_{i+1})$, $f'_{i-1}(M'_{i-1}) = f_{i}^{'-1}(\phi_{i+1}(e_{i+1}))$ for all $i \geq 2$. As $f'_i(\phi_i(e_i)) = \phi_{i+1}(f_i(e_i)) = \phi_{i+1}(e_{i+1})$, $f'_{i-1}(M'_{i-1}) = f'_{i-1}(\phi_{i-1}(M_{i-1})) = \phi_i \circ f_{i-1}(M_{i-1})$ for $ m_{i} \in M_i$ and $m'_{i} =\phi_i(m_{i}) \in M'_i$, $f'_{i}(m'_{i}) = \phi_{i+1}(e_{i+1}) \Leftrightarrow f'_{i}(\phi_i(m_{i})) = \phi_{i+1}(e_{i+1}) \Leftrightarrow \phi_{i+1}(f_i(m_{i})) = \phi_{i+1}(e_{i+1}) \Leftrightarrow f_i(m_{i}) = e_{i+1}$, which means $m'_{i} \in f_{i}^{'-1}(\phi_{i+1}(e_{i+1})) \Leftrightarrow m_{i} \in f_{i}^{-1}(e_{i+1}) = f_{i-1}(M_{i-1}) \Leftrightarrow m'_{i} \in \phi_i \circ f_{i-1}(M_{i-1} ) = f'_{i-1}(M'_{i-1})$. Thus, $f'_{i-1}(M'_{i-1}) = f_{i}^{'-1}(\phi_{i+1}(e_{i+1}))$.
\end{enumerate}
\end{proof}
It's not difficult to figure out that if $\mathcal{L}$ is a TH-Complex, the $\rm(ii)$ and $\rm(iii)$ hold in Proposition \ref{zs} (just replace BasePoint-Compatible with TH-Complex and exact with complex). Moreover, $\rm(i)$ implies that no matter which base point we choose, $\mathcal{G}_{e_\mathcal{L}}(\mathcal{L})$ and $\mathcal{G}_{e'_\mathcal{L}}(\mathcal{L})$ can be seen in the same class, which guarantees a one-to-one correspondence between $\mathcal{L}$ and $\mathcal{G}_{e_\mathcal{L}}(\mathcal{L})$.

\subsection{Homological dimensions of BP Gorenstein projective heaps of $T$-modules}

In this subsection, we establish some foundations of homological dimensions and employ them to characterize BP Gorenstein projective heaps of $T$-modules in the category $T\textbf{-HMod}$

\begin{definition}
Let $M$ be a heap of $T$-modules. A (BP) projective resolution of $M$ is (a BasePoint-Compatible) an exact sequence of heaps of $T$-modules
$\dots \to P_n \to P_{n-1} \to \dots \to P_0 \twoheadrightarrow M $, where each $P_i$ is a projective heap of $T$-modules.
\end{definition}

\begin{definition}\label{sqe}
Let $M$ be a heap of $T$-modules. The (BP) projective dimension $(BP)pd_{TH}(M)$ is defined as the smallest non-negative integer $n$ for which there exists (a BasePoint-Compatible) an exact sequence of heaps of $T$-modules $P_n \hookrightarrow P_{n-1} \to \dots \to P_0 \twoheadrightarrow M$, where each $P_i$ is a projective heap of $T$-modules. If no such finite sequence exists, we set $(BP)pd_{TH}(M) = \infty$.
\end{definition}

\begin{remark}
Let $T\textbf{-HMod}$ has enough projective objects. It follows easily that every object $M$ in $T\text{-HMod}$ admits a BP projective resolution. From Theorem \ref{th1} and any $R(T)$-module admits a projective resolution in $R(T)\textbf{-Mod}$, this consequence is direct.
\end{remark}

\begin{definition}
If $T\textbf{-HMod}$ has enough projective objects, then we define $\operatorname{Ext}_{TH}^n(M,N)$ as the set of equivalence classes of $n$-step extension sequences which is BasePoint-Compatible
$$ N \hookrightarrow X_1 \rightarrow \cdots \rightarrow X_n \twoheadrightarrow M.$$
For a BP projective resolution $P^\bullet \to M$, we have $\operatorname{Ext}_{TH}^n(M, N) = H^n(\operatorname{Hom}^\bullet_{TH}(P^\bullet, N))$.
\end{definition}
For all $M,N \in T\textbf{-HMod}, \operatorname{Ext}_{TH}^n(M,N) \cong \varepsilon^n(M,N)$ as an abelian heap, where $\varepsilon^n(M,N)$ is an abelian heap of equivalence classes of $n$-extensions (Yoneda extensions) of $M$ and the ternary operation of $\varepsilon^n(M,N)$ is induced by Baer Sum(let $[E], [E'] \in \varepsilon^n(M,N)$, define the Baer sum of $[E]$ and $[E']$ as $[E]+[E']:= [({\rm Id}_N,{\rm Id}_N)(E \oplus E')\binom{{\rm Id}_M}{{\rm Id}_M}]$).

\begin{definition}
For any class $\mathcal{X}$ of heaps of $T$-modules, we define the left orthogonal class by
$$^{\bot}\mathcal{X} = \{M \in T\textbf{-HMod} | \operatorname{Ext}_{TH}^i(M,X) = e_i,\text{ for all }X\in \mathcal{X}\text{ and all
}i>0\},$$
where $e_i \in H^i(\operatorname{Hom}^\bullet_{TH}(P^\bullet, X))$, $P^\bullet$ is a BP projective resolution of $M$.\\

Subsequently, whenever the expression $\operatorname{Ext}_{TH}^i(M,X) = e_i$ appears, we will no longer specify the abelian heap to which \(e_i\) belongs.
\end{definition}

\begin{definition}
For any heap of $T$-modules $M$, we define two types of BasePoint-Compatible resolutions.
\begin{enumerate}
    \item[(i)] A left $\mathcal{Q}$-resolution of $M$ is a BasePoint-Compatible sequence $\mathbf{Q}=\cdots \rightarrow Q_1 \rightarrow Q_0 \twoheadrightarrow M$ with $Q_n\in \mathcal{Q}$ for all $n\geq 0$;
    \item[(ii)] A right $\mathcal{Q}$-resolution of $M$ is a BasePoint-Compatible sequence $\mathbf{Q}=M \hookrightarrow Q^0 \rightarrow Q^1 \rightarrow \cdots$ with $Q^n\in \mathcal{Q}$ for all $n\geq 0$,
\end{enumerate}
 where $\mathcal{Q}$ is a class in $T\textbf{-HMod}$. Now let $\mathbf{Q}$ be any (left or right) BasePoint-Compatible $\mathcal{Q}$-resolution of $M$, we say that $\mathbf{Q}$ is proper (co-proper) if the sequence $\operatorname{Hom}^\bullet_{TH}(X, \mathbf{Q})$ ($\operatorname{Hom}^\bullet_{TH}(\mathbf{Q}, X)$) is BasePoint-Compatible for all $X\in \mathcal{Q}$.
\end{definition}

\begin{definition}
A complete projective resolution of heaps of $T$-modules is an exact sequence of projective heaps of $T$-modules $\mathcal{P}: \cdots \to P_1 \xrightarrow{f_1} P_0 \xrightarrow{f} P^0 \xrightarrow{f^1} P^1 \to \cdots$ such that $\operatorname{Hom}^\bullet_{TH}(\mathcal{P}, Q)$ is exact for every projective heap of $T$-modules $Q$. A heap of $T$-modules $M $ is called Gorenstein projective if there exists a complete projective resolution of heaps of $T$-modules with $M \cong \operatorname{Im}(P_0 \to P^0)$. The class of Gorenstein projective heaps of $T$-modules is denoted by $\mathcal{GP}(TH)$. In particular, a complete projective resolution of heaps of $T$-modules is said to be BasePoint-Compatible if $\mathcal{P}$ is BasePoint-Compatible, and $\operatorname{Hom}^\bullet_{TH}(\mathcal{P}, Q)$ is BasePoint-Compatible for every projective heap of $T$-modules $Q$. A heap of $T$-modules $M$ is called BP Gorenstein projective if there exists a BasePoint-Compatible complete projective resolution of heaps of $T$-modules with $M \cong \operatorname{Im}(P_0 \to P^0)$. The class of BasePoint-Compatible Gorenstein projective heaps of modules is denoted by $\mathcal{BPGP}(TH)$.
\end{definition}

\begin{proposition}
Let $\mathcal{L}$ be any sequence of heaps of $T$-modules. Then for every heap of $T$-modules $Q$, $\operatorname{Hom}_{TH}(\mathcal{L}, Q) \cong \operatorname{Hom}^\bullet_{TH}(\mathcal{L}, Q)\times Q$.
\end{proposition}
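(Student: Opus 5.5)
The statement reduces to a single object: it suffices to prove that for heaps of $T$-modules $M$ and $Q$ with $Q\neq\varnothing$ there is a natural bijection of abelian heaps
\[
\operatorname{Hom}_{TH}(M,Q)\cong \operatorname{Hom}^\bullet_{TH}(M,Q)\times Q ,
\]
natural in $M$. Applying this termwise to $\mathcal{L}$, with the induced maps, then gives the claimed isomorphism of sequences. The case $Q=\varnothing$ is degenerate: both sides are empty unless $M$ is empty, and then the convention on the empty heap is used. I would dispose of it first.

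The key observation is a pointed splitting of morphisms. Fix base points $e_M\in M$ and $e_Q\in Q$. For any morphism $f:M\to Q$ of heaps of $T$-modules, write
\[
f=\tau^{f(e_M)}_{e_Q}\circ\bigl(\tau^{e_Q}_{f(e_M)}\circ f\bigr).
\]
The inner map $f_0=\tau^{e_Q}_{f(e_M)}\circ f=\mathcal{G}(f)$ sends $e_M$ to $e_Q$, so it is an $R(T)$-linear map $\mathcal{G}_{e_M}(M)\to\mathcal{G}_{e_Q}(Q)$; this follows from the argument in the proof of Theorem \ref{th1}. I would define
\[
\Theta(f)=\bigl(\overline{f_0},\,f(e_M)\bigr).
\]
By Construction \ref{oo}, $\operatorname{Hom}^\bullet_{TH}(M,Q)\cong\mathcal{H}(\operatorname{Hom}_{R(T)}(\mathcal{G}_{e_M}(M),\mathcal{G}_{e_Q}(Q)))$. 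Moreover, each translation class of morphisms contains exactly one map sending $e_M\mapsto e_Q$, namely the pointed one. Hence $\Theta$ is well defined.

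The inverse is $(g,q)\mapsto \tau^{q}_{e_Q}\circ g$, where $g$ is the pointed representative of its class. This map is a morphism of heaps of $T$-modules, because translations are automorphisms in $T\textbf{-HMod}$ by the base change property (HM2). The two composites are the identity by the Mal'cev identities: $\tau^{f(e_M)}_{e_Q}\tau^{e_Q}_{f(e_M)}=\mathrm{Id}$, and $(\tau^q_{e_Q}g)(e_M)=q$. Next I would check that $\Theta$ respects the pointwise heap operation. Using Lemma \ref{le.se}(4), one has $[f_1,f_2,f_3](e_M)=[f_1(e_M),f_2(e_M),f_3(e_M)]$. Likewise, the pointed part of $[f_1,f_2,f_3]$ is $[f_{1,0},f_{2,0},f_{3,0}]$, which is computed in the group $\mathcal{G}_{e_Q}(Q)$ as $x-y+z$.

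The step I expect to be the main obstacle is naturality, that is, compatibility with the maps $f_i$ of $\mathcal{L}$. Precomposition with $f_i:M_i\to M_{i+1}$ does not preserve the chosen base points, so on the right-hand side the induced map must be twisted. I would choose base points along $\mathcal{L}$ and prescribe the induced map as
\[
(g,q)\mapsto\bigl(\overline{g\circ\tau^{e_{i+1}}_{f_i(e_i)}f_i},\;(\tau^q_{e_Q}g)(f_i(e_i))\bigr).
\]
I would then verify commutativity by the same translation computation as in Proposition \ref{zs}(i). That proposition also shows that the result is independent of the choice of base points, up to isomorphism.
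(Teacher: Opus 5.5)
Your proposal is correct and is essentially the paper's proof: the termwise bijection $\phi\mapsto(\tau^{e_Q}_{\phi(e_i)}\phi,\phi(e_i))$ with inverse $(\alpha,q)\mapsto\tau^{q}_{e_Q}\alpha$, the same twisted transition maps (your second component $(\tau^q_{e_Q}g)(f_i(e_i))$ is exactly the paper's $g_{i-1}$), and the same translation computation for commutativity. You go slightly further than the paper by checking that the bijection respects the pointwise heap operations. Like the paper, you tacitly need every $M_i$ to be nonempty: for $M_i=\varnothing$ the left side is a singleton while the right side is in bijection with $Q$. So the empty case should be excluded by hypothesis rather than handled by a convention.
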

\begin{proof}
Let $\mathcal{L}$ be any sequence of heaps of $T$-modules, consider the diagram below:
$$ \xymatrix{\cdots \ar[r] & \operatorname{Hom}_{TH}(M_i, Q) \ar[d]_{\tau_{i}} \ar[r]^{f'_{i-1}} & \operatorname{Hom}_{TH}(M_{i-1}, Q) \ar[d]_{\tau_{i-1}} \ar[r] & \cdots \\
  \cdots \ar[r] & \mathcal{H}(\operatorname{Hom}_{R(T)}(\mathcal{G}_{e_{i}}(M_{i}), \mathcal{G}_{e_Q}(Q)))\times Q \ar[r]^{((\tau^{e_i}_{f_{i-1}(e_{i-1})}f_{i-1})', g_{i-1})} & \mathcal{H}(\operatorname{Hom}_{R(T)}(\mathcal{G}_{e_{i-1}}(M_{i-1}), \mathcal{G}_{e_Q}(Q))\times Q \ar[r] & \cdots ,}$$
where $f'_{i-1} = \operatorname{Hom}_{TH}(f_{i-1},Q)$, $(\tau^{e_i}_{f_{i-1}(e_{i-1})}f_{i-1})'=\operatorname{Hom}_{TH}(\tau^{e_i}_{f_{i-1}(e_{i-1})}f_{i-1},Q)$, $\tau_{i}(\phi_i) = (\alpha_i,q_i)=(\tau_{\phi_i(e_i)}^{e_Q}\phi_i,\phi_i(e_i))$, $g_{i-1}(q_i)=(f'_{i-1}\tau^{q_i}_{e_Q}\alpha)(e_{i-1})$ for $\phi_i \in \operatorname{Hom}_{TH}(M_i, Q)$. Now it is to verify $\tau_i$ is isomorphic. Construct $\tau'_i: \tau'_i(\alpha_i ,q_i)=\tau^{q_i}_{e_Q}\alpha_i$, since $\tau_i\tau'_i(\alpha_i ,q_i)=\tau_i(\tau^{q_i}_{e_Q}\alpha_i)=(\tau^{e_Q}_{\tau^{q_i}_{e_Q}\alpha_i(e_i)}\tau^{q_i}_{e_Q}\alpha_i,\tau^{q_i}_{e_Q}\alpha_i(e_i))=(\alpha_i ,q_i)$ and $\tau'_i\tau_i(\phi_i)=\tau'_i(\tau_{\phi_i(e_i)}^{e_Q}\phi_i,\phi_i(e_i))=\tau^{\phi_i(e_i)}_{e_Q}\tau_{\phi_i(e_i)}^{e_Q}\phi_i=\phi_i$, $\tau_i$ is isomorphic. As for all $\phi_i \in \operatorname{Hom}_{TH}(M_i, Q)$,
\begin{align*}
((\tau^{e_i}_{f_{i-1}(e_{i-1})}f_{i-1})', g_{i-1}) \tau_i(\phi_i)&=((\tau^{e_i}_{f_{i-1}(e_{i-1})}f_{i-1})', g_{i-1})(\tau_{\phi_i(e_i)}^{e_Q}\phi_i,\phi_i(e_i))\\
&=(\tau_{\phi_i(e_i)}^{e_Q}\phi_i\tau^{e_i}_{f_{i-1}(e_{i-1})}f_{i-1},(f'_{i-1}\tau^{\phi_i(e_i)}_{e_Q}\tau_{\phi_i(e_i)}^{e_Q}\phi_i)(e_{i-1}))\\
&=(\tau_{\phi_i(e_i)}^{e_Q}\tau^{\phi_i(e_i)}_{\phi_i f_{i-1}(e_{i-1})}\phi_i f_{i-1},\phi_i f_{i-1}(e_{i-1}))\\
&=(\tau^{e_Q}_{\phi_i f_{i-1}(e_{i-1})}\phi_i f_{i-1},\phi_i f_{i-1}(e_{i-1})).
\end{align*}
and $\tau_{i-1}f'_{i-1}(\phi_i)=\tau_{i-1}(\phi_i f_{i-1})=(\tau^{e_Q}_{\phi_i f_{i-1}(e_{i-1})}\phi_i f_{i-1},\phi_i f_{i-1}(e_{i-1}))$, the diagram above is commutative, which means that $\operatorname{Hom}_{TH}(\mathcal{L}, Q) \cong \operatorname{Hom}^\bullet_{TH}(\mathcal{L}, Q)\times Q$.
\end{proof}

\begin{theorem}\label{nm}
A heap of $T$-modules $M$ is BP Gorenstein projective if and only if $\mathcal{G}_{e_m}(M)$ is a Gorenstein projective $R(T)$-module for all $e_m\in M$.
\end{theorem}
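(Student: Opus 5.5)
The plan is to move the statement back and forth along the functors $\mathcal{G}$ and $\mathcal{H}$, using three earlier results: Theorem \ref{th1} (projectivity is detected by $\mathcal{G}_{e_p}$), Proposition \ref{zs} (BasePoint-Compatible sequences correspond to exact sequences of $R(T)$-modules, independently of base points), and Construction \ref{oo} (the identification $\operatorname{Hom}^\bullet_{TH}(M,N)\cong \mathcal{H}(\operatorname{Hom}_{R(T)}(\mathcal{G}_{e_M}(M),\mathcal{G}_{e_N}(N)))$). Since $\mathcal{G}_{e}(M)\cong\mathcal{G}_{e'}(M)$ via a translation for any $e,e'\in M$, the conditions ``for all $e_m$'' and ``for some $e_m$'' are equivalent. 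So it suffices to work with one base point.

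\emph{Necessity.} Let $\mathcal{P}:\cdots\to P_1\to P_0\to P^0\to P^1\to\cdots$ be a BasePoint-Compatible complete projective resolution with $M\cong\operatorname{Im}(P_0\to P^0)$, and fix compatible base points $e_i$. By Proposition \ref{zs}(ii), $\mathcal{G}_{e_\mathcal{P}}(\mathcal{P})$ is an exact complex of $R(T)$-modules, and by Theorem \ref{th1} each term is projective. The image of $P_0$ in $P^0$ is a sub-heap of $T$-modules containing the base point of $P^0$. Under $\mathcal{G}$ at that point it becomes the image submodule, which is $\mathcal{G}_{e_m}(M)$ for the corresponding $e_m$.

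It remains to check that $\operatorname{Hom}_{R(T)}(\mathcal{G}(\mathcal{P}),Q')$ is exact for every projective $R(T)$-module $Q'$. Put $Q=\mathcal{H}(Q')$; it is a projective heap by Theorem \ref{th1}, and $\mathcal{G}_{0}(Q)\cong Q'$. By Construction \ref{oo}, applied degreewise together with Proposition \ref{zs}(i) to make the induced maps match, $\operatorname{Hom}^\bullet_{TH}(\mathcal{P},Q)\cong\mathcal{H}(\operatorname{Hom}_{R(T)}(\mathcal{G}(\mathcal{P}),Q'))$. This sequence is BasePoint-Compatible by hypothesis, so applying Proposition \ref{zs}(ii) once more gives exactness of the Hom complex. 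Hence $\mathcal{G}_{e_m}(M)$ is Gorenstein projective.

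\emph{Sufficiency.} Take a complete projective resolution $\mathbf{P}$ of $R(T)$-modules with $\mathcal{G}_{e_m}(M)\cong \operatorname{Im}(P_0\to P^0)$, and set $\mathcal{P}=\mathcal{H}(\mathbf{P})$. Each term is projective by Theorem \ref{th1}, and $\mathcal{P}$ is BasePoint-Compatible by Proposition \ref{zs}(ii), with the zeros as base points. Its image heap is $\mathcal{H}(\mathcal{G}_{e_m}(M))=M$. For a projective heap $Q$, choose $e_Q\in Q$; then $\mathcal{G}_{e_Q}(Q)$ is projective, so $\operatorname{Hom}_{R(T)}(\mathbf{P},\mathcal{G}_{e_Q}(Q))$ is exact. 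Applying $\mathcal{H}$ and Construction \ref{oo} shows that $\operatorname{Hom}^\bullet_{TH}(\mathcal{P},Q)$ is BasePoint-Compatible. The case $Q=\varnothing$ does not arise, since projective heaps that occur as terms are nonempty, and we can restrict to nonempty $Q$.

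The main obstacle is the naturality step: making sure the isomorphism of Construction \ref{oo} commutes with the maps $\operatorname{Hom}^\bullet(f_i,Q)$. These are defined on translation classes, whereas $\mathcal{G}(f_i)=\tau\circ f_i$. I would handle this by sending a class $[\phi]$ to its unique representative $\tau^{e_Q}_{\phi(e_i)}\phi$ fixing base points, exactly as in the proof of the preceding proposition. That proposition's splitting $\operatorname{Hom}_{TH}\cong\operatorname{Hom}^\bullet_{TH}\times Q$ shows the resulting square commutes, because the compatible base points satisfy $f_i(e_i)=e_{i+1}$.
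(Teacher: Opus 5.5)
Your proposal is correct in substance and follows the same route as the paper: you transport everything along $\mathcal{G}$ and $\mathcal{H}$ using Theorem~\ref{th1}, Proposition~\ref{zs} and Construction~\ref{oo}, and you also verify the naturality of the identification $\operatorname{Hom}^\bullet_{TH}(\mathcal{P},Q)\cong\mathcal{H}(\operatorname{Hom}_{R(T)}(\mathcal{G}(\mathcal{P}),\mathcal{G}(Q)))$, which the paper leaves implicit. The one flaw is your reason for discarding $Q=\varnothing$: $Q$ is not a term of $\mathcal{P}$, and the empty heap is initial and hence projective, while $\operatorname{Hom}^\bullet_{TH}(\mathcal{P},\varnothing)$ consists of empty sets and can never be BasePoint-Compatible; you should therefore state outright that the definition is read with $Q$ nonempty, and likewise $M\neq\varnothing$ (your ``for all $\Leftrightarrow$ for some'' reduction also needs this), which the paper assumes tacitly.
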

\begin{proof}
Necessity: Assume that $M$ is a BP Gorenstein projective heap of $T$-modules. Then there exists a BasePoint-Compatible complete projective resolution of heaps of $T$-module $$\mathcal{P}: \cdots \to P_1 \xrightarrow{f_1} P_0 \xrightarrow{f_0} P^0 \xrightarrow{f^1} P^1 \to \cdots$$ such that $M \cong \operatorname{Im}(f_0)$. From Theorem \ref{th1} and Proposition \ref{zs}, we have a corresponding $R(T)$-module exact sequence
$$\mathcal{G}_{e_{\mathcal{P}}}(\mathcal{P}):\cdots \to \mathcal{G}_{e_1}(P_1) \xrightarrow{f_1} \mathcal{G}_{e_0}(P_0) \xrightarrow{f_0} \mathcal{G}_{e^0}(P^0) \xrightarrow{f^1} \mathcal{G}_{e^1}(P^1) \to \cdots $$
with each term being a projective $R(T)$-module. By Theorem \ref{th1}, Construction \ref{oo} and Proposition \ref{zs}, $\operatorname{Hom}^\bullet_{R(T)}(\mathcal{G}_{e_{\mathcal{P}}}(\mathcal{P}), \mathcal{G}_{e_Q}(Q))$ is exact for every projective $R(T)$-module $\mathcal{G}_{e_Q}(Q)$. From \cite[Remark 2.13]{mb}, $\mathcal{G}_{e_m}(M) \cong \mathcal{G}_{e_m}(Imf_0) \cong Im(\mathcal{G}(f_0))$, where $\mathcal{G}(f_0)$ is the corresponding $R(T)$-module homomorphism of $f_0$, thus $\mathcal{G}(f_0)$ is a homomorphism in $\mathcal{G}_{e_{\mathcal{P}}}(\mathcal{P})$ and $\mathcal{G}_{e_m}(M)$ is a Gorenstein projective $R(T)$-module.

Sufficiency: It's direct by applying $\mathcal{H}$.
\end{proof}

\begin{example}
\begin{enumerate}
    \item[(i)] Let $k$ be a field, $T= T(k[x]/(x^2))$ and $R(T) \cong G(T,0) \times \mathbb{Z}$, where $0$ is the zero element of the ring $k[x]/(x^2)$ (in fact, if we choose $e\neq 0$, then we have $G(T,e) \times \mathbb{Z} \cong G(T,0) \times \mathbb{Z}$), then $(x) \times \mathbb{Z}$ is a Gorenstein projective $R(T)$-module, and so a BP Gorenstein projective heap of $T$-modules by Theorem \ref{nm};
    \item[(ii)] Let $k$ be a field, Morita ring $
\Lambda = \begin{pmatrix}
k[x] & k \\
0 & k
\end{pmatrix}$ and $R(T) \cong G(T,0) \times \mathbb{Z}$, where $0$ is the zero element of the ring $\Lambda$ (in fact, if we choose $e\neq 0$, then we have $G(T,e) \times \mathbb{Z} \cong G(T,0) \times \mathbb{Z}$), then $
\begin{pmatrix}
k \\
0
\end{pmatrix}
 \times \mathbb{Z}$ is a Gorenstein projective $R(T)$-module, and so a BP Gorenstein projective heap of $T$-modules by Theorem \ref{nm}.
\end{enumerate}
\end{example}

\begin{definition}
Let $M$ be a heap of $T$-modules. A (BP) Gorenstein projective resolution of $M$ is (a BasePoint-Compatible) an exact sequence of heaps of $T$-modules
$$ \cdots \to (BP)G_n \to (BP)G_{n-1} \to \cdots \to (BP)G_0 \to M,$$
where each $(BP)G_i$ is a (BP) Gorenstein projective heap of modules.
\end{definition}

\begin{definition}
Let $M$ be a heap of $T$-modules. The (BP) Gorenstein projective dimension $(BP)Gpd_{TH}(M)$ is defined as the smallest nonnegative integer $n$ such that there exists an exact (BasePoint-Compatible) sequence of heaps of $T$-modules
$$(BP)G_n \hookrightarrow (BP)G_{n-1} \to \cdots \to (BP)G_0 \twoheadrightarrow M,$$
where each $(BP)G_i$ is a (BP) Gorenstein projective heap of modules. If no such finite exact (BasePoint-Compatible) sequence exists, then we write $(BP)~Gpd_{TH}(M) = \infty$.
\end{definition}

\begin{theorem}\label{vv}
Let $M$ be a heap of $T$-modules with finite BP projective dimension. Then we have:
$$ pd_{TH}(M) \leq BPpd_{TH}(M) = pd_{R(T)}(\mathcal{G}_{e_m}(M)).$$
Similarly, let $M$ be a heap of $T$-modules with finite BP Gorenstein projective dimension. Then we have:
$$ Gpd_{TH}(M) \leq BPGpd_{TH}(M) = Gpd_{R(T)}(\mathcal{G}_{e_m}(M)).$$
\end{theorem}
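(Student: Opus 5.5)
The plan is to treat the projective and Gorenstein projective cases in parallel. In each case I prove two things: the inequality $pd_{TH}(M)\leq BPpd_{TH}(M)$, and then the equality $BPpd_{TH}(M)=pd_{R(T)}(\mathcal{G}_{e_m}(M))$ by establishing both inequalities. The transfer tools are Theorem \ref{th1} (for projectives) and Theorem \ref{nm} (for BP Gorenstein projectives), combined with Proposition \ref{zs}.

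The inequality $pd_{TH}(M)\leq BPpd_{TH}(M)$ is formal. A BasePoint-Compatible sequence $P_n\hookrightarrow P_{n-1}\to\cdots\to P_0\twoheadrightarrow M$ is in particular exact at every inner term, because the base points $e_i$ witness the exactness condition $f_{i-1}(M_{i-1})=f_i^{-1}(e_{i+1})$. So the minimum defining $pd_{TH}$ is taken over a larger class. In the Gorenstein case I additionally note that a BP Gorenstein projective heap is Gorenstein projective, since a BasePoint-Compatible complete resolution is a complete resolution. Hence $Gpd_{TH}(M)\leq BPGpd_{TH}(M)$ follows the same way.

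For $BPpd_{TH}(M)\geq pd_{R(T)}(\mathcal{G}_{e_m}(M))$, I take a BasePoint-Compatible sequence of length $n=BPpd_{TH}(M)$ with projective terms. Applying $\mathcal{G}$ at the compatible base points gives an exact sequence of $R(T)$-modules, by Proposition \ref{zs}(ii). Its terms are projective $R(T)$-modules by Theorem \ref{th1}, so $pd_{R(T)}(\mathcal{G}_{e_0}(M))\leq n$. By Proposition \ref{zs}(i) and the translation isomorphisms $\mathcal{G}_{e}(M)\cong\mathcal{G}_{e'}(M)$, the value does not depend on the base point, so it equals $pd_{R(T)}(\mathcal{G}_{e_m}(M))$ for any $e_m$.

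For the reverse inequality, I take a projective resolution $0\to Q_n\to\cdots\to Q_0\to\mathcal{G}_{e_m}(M)\to 0$ of length $n=pd_{R(T)}(\mathcal{G}_{e_m}(M))$, which is finite because $BPpd_{TH}(M)<\infty$ and the previous step applies. Applying $\mathcal{H}$ yields a BasePoint-Compatible sequence by Proposition \ref{zs}(ii), with base points the zeros. Its terms $\mathcal{H}(Q_i)$ are projective heaps by Theorem \ref{th1}, since $\mathcal{G}_0\mathcal{H}(Q_i)\cong Q_i$. Finally, $\mathcal{H}\mathcal{G}_{e_m}(M)=M$ identifies the last term with $M$. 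Together these give the equality.

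The Gorenstein case is identical, replacing Theorem \ref{th1} by Theorem \ref{nm} and projective resolutions by Gorenstein projective resolutions in $R(T)$\textbf{-Mod}.

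The main obstacle is the endpoint of the sequence. I must check that the injection $Q_n\hookrightarrow Q_{n-1}$ and the surjection onto $\mathcal{G}_{e_m}(M)$ correspond, under $\mathcal{G}$ and $\mathcal{H}$, to a monomorphism and an epimorphism of heaps, with the base points chosen so that $e_0\mapsto e_m$. The definition of BasePoint-Compatible only constrains indices $i\geq 2$, so I would add these end conditions explicitly, using that $\mathcal{G}$ and $\mathcal{H}$ do not change underlying maps up to translation. Translations are bijections, so injectivity and surjectivity are preserved.
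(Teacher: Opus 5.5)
Your proposal is correct and follows the same route as the paper. Both arguments transfer resolutions back and forth with $\mathcal{G}$ and $\mathcal{H}$, using Theorem \ref{th1} (respectively Theorem \ref{nm}) together with Proposition \ref{zs}, and both get the inequalities from the fact that BasePoint-Compatible sequences are exact. Your explicit treatment of the end terms, and of the implication that BP Gorenstein projective implies Gorenstein projective (which $Gpd_{TH}(M)\leq BPGpd_{TH}(M)$ needs), fills in details the paper leaves implicit.
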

\begin{proof}
Let $M$ be a heap of $T$-modules with finite BP projective dimension. By Theorem \ref{th1} and Proposition \ref{zs}, $P^\bullet$ is a BP projective resolution of $M$ if and only if $\mathcal{G}_{e_{P^\bullet}}(P^\bullet)$ is a projective resolution of $G_{e_m}(M)$, thus $BPpd_{TH}(M) = pd_{R(T)}(\mathcal{G}_{e_m}(M))$. Since a BasePoint-Compatible sequence of heaps of $T$-modules is an exact sequence of heaps of $T$-modules, $pd_{TH}(M) \leq BPpd_{TH}(M)$. Similarly, by Proposition \ref{zs} and Theorem \ref{nm}, $BPGpd_{TH}(M) = Gpd_{R(T)}(\mathcal{G}_{e_m}(M))$ and $Gpd_{TH}(M) \leq BPGpd_{TH}(M)$.
\end{proof}

\begin{theorem}\rm\label{pp}
Let $T\textbf{-HMod}$ have enough projective objects and $M$, $N$ heaps of $T$-modules. Fix any base points $e_M \in M, e_N \in N$, denote by $\mathcal{G}_{e_M}(M)$ and $\mathcal{G}_{e_N}(N)$ the associated pointed $T$-modules (hence $R(T)$-modules). Then for every $i\geq 0$, there exists an isomorphism of abelian heaps
$$\operatorname{Ext}_{TH}^i(M,N)\cong \operatorname{Ext}_{R(T)}^i(\mathcal{G}_{e_M}(M),\mathcal{G}_{e_N}(N)),$$
where the isomorphism is independent of the choice of base points, and the ternary operation of $\operatorname{Ext}_{R(T)}^i(\mathcal{G}_{e_M}(M),\mathcal{G}_{e_N}(N))$ is induced by its Baer Sum.
\end{theorem}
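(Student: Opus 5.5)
We will compute $\operatorname{Ext}_{TH}^i(M,N)$ through the cohomological description $\operatorname{Ext}_{TH}^i(M,N)=H^i(\operatorname{Hom}^\bullet_{TH}(P^\bullet,N))$ for a BP projective resolution $P^\bullet\to M$, and then transport everything to $R(T)\textbf{-Mod}$ via $\mathcal{G}$. The plan is to show that the functor $\mathcal{G}$ carries the relevant complex of heaps to $\mathcal{H}$ of the classical $\operatorname{Hom}$-complex. Throughout we assume $M,N\neq\varnothing$, so that base points exist.

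\textbf{Step 1 (resolutions).} Choose a projective resolution $Q^\bullet\to \mathcal{G}_{e_M}(M)$ in $R(T)\textbf{-Mod}$. Apply $\mathcal{H}$ and use $\mathcal{H}\mathcal{G}_{e_M}(M)=M$. By Proposition \ref{zs}(ii) and Theorem \ref{th1}, $P^\bullet:=\mathcal{H}(Q^\bullet)\to M$ is a BP projective resolution. Conversely, any BP projective resolution $P^\bullet\to M$ gives a projective resolution $\mathcal{G}_{e_{P^\bullet}}(P^\bullet)\to\mathcal{G}_{e_M}(M)$. By Proposition \ref{zs}(i), the choice of base points changes this only up to isomorphism of complexes.

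\textbf{Step 2 (Hom-complexes).} By Construction \ref{oo}, we have termwise isomorphisms
\[
\operatorname{Hom}^\bullet_{TH}(P_j,N)\cong\mathcal{H}\bigl(\operatorname{Hom}_{R(T)}(\mathcal{G}_{e_j}(P_j),\mathcal{G}_{e_N}(N))\bigr).
\]
Each is given by sending the class of $\phi$ modulo $\operatorname{Tr}(N)$ to the based representative $\tau^{e_N}_{\phi(e_j)}\phi$. We check that these isomorphisms commute with the differentials. Since $f_{j}(e_{j})=e_{j-1}$ in a BasePoint-Compatible sequence, precomposition with $f_j$ sends based maps to based maps. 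This is the same translation computation as in Proposition \ref{zs}(i) and in the preceding Proposition on $\operatorname{Hom}_{TH}(\mathcal{L},Q)$. Hence $\operatorname{Hom}^\bullet_{TH}(P^\bullet,N)\cong\mathcal{H}(\operatorname{Hom}_{R(T)}(\mathcal{G}(P^\bullet),\mathcal{G}_{e_N}(N)))$ as TH-complexes.

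\textbf{Step 3 (cohomology).} The cohomology of a TH-complex is $f_i^{-1}(e_{i+1})/f_{i-1}(M_{i-1})$. We show this commutes with $\mathcal{H}$: for a complex of $R(T)$-modules $C^\bullet$ with base points $0$, the fibre over $0$ is $\ker$, and the quotient heap by the sub-heap $\operatorname{Im}$ is $\mathcal{H}(\ker/\operatorname{Im})$. This gives
\[
H^i(\operatorname{Hom}^\bullet_{TH}(P^\bullet,N))\cong \mathcal{H}(\operatorname{Ext}^i_{R(T)}(\mathcal{G}_{e_M}(M),\mathcal{G}_{e_N}(N))).
\]
The heap structure on the right is $a-b+c$, which is exactly the one induced by the Baer sum. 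Classically, Baer sum agrees with addition in derived-functor Ext, so we also recover the stated compatibility with $\varepsilon^n$.

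\textbf{Step 4 (independence).} Independence of base points comes from two facts. First, changing $e_M$ changes $\mathcal{G}_{e_M}(M)$ by the translation isomorphism $\tau^{e_M'}_{e_M}$, and changing $e_N$ does the same for $\mathcal{G}_{e_N}(N)$. Second, Ext is functorial in isomorphisms, and the comparison isomorphisms of Proposition \ref{zs}(i) commute with these translations. Independence of the resolution is the classical comparison theorem, transported via Step 1.

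The main obstacle is Step 2/3: making precise that the differentials of $\operatorname{Hom}^\bullet_{TH}$, which are defined on classes modulo translations, match the $R(T)$-linear differentials. We also need that the heap-theoretic cohomology at degree $0$ and at the ends is well defined, where the base point of $\operatorname{Hom}^\bullet$ must be the class of the constant maps. I would handle this by fixing based representatives throughout, and by verifying the commutation square explicitly once, as in Proposition \ref{zs}(i).
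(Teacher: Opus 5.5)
Your proposal is correct, but it takes a genuinely different route from the paper. The paper works only with the Yoneda description of $\operatorname{Ext}^i_{TH}(M,N)$, as equivalence classes of BasePoint-Compatible extensions $N\hookrightarrow X_1\to\cdots\to X_i\twoheadrightarrow M$. Its steps are:
\begin{itemize}
\item it defines $\Phi([\Upsilon])=[\mathcal{G}(\Upsilon)]$ and $\Psi([\Gamma])=[\mathcal{H}(\Gamma)]$;
\item it uses Proposition~\ref{zs}(ii) to pass between BP-compatibility and exactness;
\item it notes that $\mathcal{G}$ sends a morphism of extensions to a morphism of extensions, so $\Phi$ is well defined;
\item it gets base-point independence from Proposition~\ref{zs}(i);
\item it concludes from $\mathcal{H}\mathcal{G}\cong\mathrm{id}$ and $\mathcal{G}\mathcal{H}\cong\mathrm{id}$ that $\Phi$ and $\Psi$ are inverse bijections.
\end{itemize}
No resolutions or Hom-complexes appear. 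You instead use the cohomological description $H^i(\operatorname{Hom}^\bullet_{TH}(P^\bullet,N))$. You identify $\operatorname{Hom}^\bullet_{TH}(P_j,N)=\operatorname{Hom}_{TH}(P_j,N)/\operatorname{Tr}(N)$ with based $R(T)$-linear maps. This is well defined because all translates of $\phi$ have the same based representative, and it is compatible with the differentials because $f_j(e_j)=e_{j-1}$. You then use that taking the fibre over the base point and forming quotient heaps both commute with $\mathcal{H}$.

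Your route makes the heap part of the statement transparent. The isomorphism is induced by an isomorphism of TH-complexes, so it visibly respects the ternary operations. The $R(T)$-side operation is $a-b+c$ for derived-functor addition, hence for Baer sum. Independence of the resolution is just the classical comparison theorem. It also gives a sensible answer at $i=0$, namely $\operatorname{Hom}^\bullet_{TH}(M,N)$, where the paper's extension-based argument degenerates. The paper's proof, by contrast, only produces a bijection of classes and never checks that $\Phi$ carries Baer sums to Baer sums.

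On the other hand, your argument depends on the identification $\operatorname{Ext}^n_{TH}(M,N)=H^n(\operatorname{Hom}^\bullet_{TH}(P^\bullet,N))$. The paper asserts this inside its definition but never proves it. If one takes the Yoneda classes as the primary definition, the paper's route reaches them directly, while yours needs that identification as an extra input.
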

\begin{proof}
Let $\Upsilon: N \hookrightarrow X_1 \rightarrow \cdots \rightarrow X_i \twoheadrightarrow M$ be a BasePoint-Compatible sequence and $[\Upsilon]$ the equivalence class of $\Upsilon$ (equivalence of extensions). Applying $\mathcal{G}$ to each term and each morphism, we obtain a sequence of $R(T)$-modules $\mathcal{G}(\Upsilon): \mathcal{G}_{e_N}(N) \hookrightarrow \mathcal{G}_{e_1}(X_1) \rightarrow \cdots \rightarrow \mathcal{G}_{e_i}(X_i) \twoheadrightarrow \mathcal{G}_{e_M}(M)$. By Proposition \ref{zs}(ii), this sequence is exact in $R(T)$-Mod. Now define $\Phi :\operatorname{Ext}_{TH}^i(M,N) \rightarrow \operatorname{Ext}_{R(T)}^i(\mathcal{G}_{e_M}(M),\mathcal{G}_{e_N}(N))$ with $[\Upsilon] \mapsto \mathcal{G}([\Upsilon])$. Next we claim that $\Phi$ is well-defined and independent of base points. In fact, if two extensions $\Upsilon$ and $\Upsilon'$ are equivalent in $\operatorname{Ext}_{TH}^i(M,N)$($[\Upsilon]=[\Upsilon']$), then there exist a chain map between $\Upsilon$ and $\Upsilon'$. Applying $\mathcal{G}$ gives a chain map between the corresponding $R(T)$-module extensions, hence $\Phi([\Upsilon])=\Phi([\Upsilon'])$. If we choose different base points, then the translation automorphisms induce a chain isomorphism between $\mathcal{G}_{e_N}(N) \hookrightarrow \mathcal{G}_{e_1}(X_1) \rightarrow \cdots \rightarrow \mathcal{G}_{e_i}(X_i) \twoheadrightarrow \mathcal{G}_{e_M}(M)$ and $\mathcal{G}_{e_N}(N) \hookrightarrow \mathcal{G}_{e'_1}(X_1) \rightarrow \cdots \rightarrow \mathcal{G}_{e'_i}(X_i) \twoheadrightarrow \mathcal{G}_{e_M}(M)$. By Proposition 4.3(i), the chain map between the corresponding $R(T)$-module extensions is isomorphic. Therefore $\Phi$ is well-defined and independent of base points.

Conversely, let $\Gamma : \mathcal{G}_{e_N}(N) \hookrightarrow Y_1 \rightarrow \cdots \rightarrow Y_i \twoheadrightarrow \mathcal{G}_{e_M}(M)$ be an exact sequence of $R(T)$-modules. Apply the functor $\mathcal{H}$, we obtain a sequence of heaps of $T$-modules $\mathcal{H}(\Gamma) : N \hookrightarrow \mathcal{H}(Y_1) \rightarrow \cdots \rightarrow \mathcal{H}(Y_i) \twoheadrightarrow M$. By Proposition \ref{zs}(ii), this sequence is BasePoint-Compatible in $T\textbf{-HMod}$. Now define $\Psi : \operatorname{Ext}_{R(T)}^i(\mathcal{G}_{e_M}(M),\mathcal{G}_{e_N}(N)) \rightarrow \operatorname{Ext}_{TH}^i(M,N)$ with $ [\Gamma] \mapsto \mathcal{H}([\Gamma])$, $\Psi$ is clearly well-defined.

As for any equivalence class $[\Upsilon]$ of a BasePoint-Compatible sequence of heaps of $T$-modules, $\Psi\Phi([\Upsilon])=\mathcal{H}\mathcal{G}([\Upsilon])\cong[\Upsilon]$ and for any equivalence class $[\Gamma]$ of an exact sequence of $R(T)$-modules, $\Phi\Psi([\Gamma])=\mathcal{G}\mathcal{H}([\Gamma])\cong [\Gamma]$, $\operatorname{Ext}_{TH}^i(M,N)\cong \operatorname{Ext}_{R(T)}^i(\mathcal{G}_{e_M}(M),\mathcal{G}_{e_N}(N))$.
\end{proof}
Based on Theorem \ref{nm}, Theorem \ref{pp} and \cite[Lemma 2.17]{rb}, we can directly obtain the following lemma.

\begin{lemma}\label{yy}
Consider a BasePoint-Compatible sequence of heaps of $T$-modules $K_n\hookrightarrow G_{n-1}\rightarrow \cdots \rightarrow G_0 \twoheadrightarrow M$, where $G_0,\ldots,G_{n-1}$ are BP Gorenstein projective heaps of $T$-modules. Then
$$\operatorname{Ext}_{TH}^i(K_n,L)\cong \operatorname{Ext}_{TH}^{i+n}(M,L)$$
for all heaps of $T$-modules $L$ with finite BP projective dimension and all integers $i>0$.
\end{lemma}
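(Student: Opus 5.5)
The plan is to transport the whole situation to $R(T)\textbf{-Mod}$ through the functor $\mathcal{G}$, invoke the classical dimension-shifting lemma \cite[Lemma 2.17]{rb} there, and transport the answer back with Theorem \ref{pp}.

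First, I fix base points coming from the BasePoint-Compatible structure. By definition there are $e_{K}\in K_n$, $e_j\in G_j$ and $e_M\in M$ compatible with all the maps. By Proposition \ref{zs}(ii), applying $\mathcal{G}$ at these base points gives an exact sequence of $R(T)$-modules
$$0\to\mathcal{G}_{e_K}(K_n)\to\mathcal{G}_{e_{n-1}}(G_{n-1})\to\cdots\to\mathcal{G}_{e_0}(G_0)\to\mathcal{G}_{e_M}(M)\to 0.$$
By Theorem \ref{nm}, each $\mathcal{G}_{e_j}(G_j)$ is a Gorenstein projective $R(T)$-module. Proposition \ref{zs}(i) makes the choice of base points irrelevant up to isomorphism.

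Second, let $L$ be a heap of $T$-modules with $BPpd_{TH}(L)<\infty$, and fix $e_L\in L$. By Theorem \ref{vv}, $pd_{R(T)}(\mathcal{G}_{e_L}(L))=BPpd_{TH}(L)<\infty$. The case $L=\varnothing$ does not arise, since BP projective dimension presupposes a base point. Now \cite[Lemma 2.17]{rb} applies to the exact sequence above and gives
$$\operatorname{Ext}^i_{R(T)}(\mathcal{G}_{e_K}(K_n),\mathcal{G}_{e_L}(L))\cong\operatorname{Ext}^{i+n}_{R(T)}(\mathcal{G}_{e_M}(M),\mathcal{G}_{e_L}(L))$$
for all $i>0$. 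Concretely, this comes from iterated dimension shifting along the short exact pieces, using $\operatorname{Ext}^{\geq 1}(G,L')=0$ for Gorenstein projective $G$ and $L'$ of finite projective dimension.

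Third, Theorem \ref{pp} identifies both sides with $\operatorname{Ext}_{TH}^i(K_n,L)$ and $\operatorname{Ext}_{TH}^{i+n}(M,L)$ respectively, as abelian heaps, and independently of base points. Composing the three isomorphisms proves the lemma.

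The main obstacle is the hypothesis of Theorem \ref{pp} that $T\textbf{-HMod}$ has enough projectives. I would handle it by noting that only the extension description of $\operatorname{Ext}_{TH}$ is needed, or by producing projectives as $\mathcal{H}$ of free $R(T)$-modules using Theorem \ref{th1}. A second point to check is that the module isomorphism is compatible with the heap structures on both sides. It is, because Theorem \ref{pp} gives heap isomorphisms induced by the Baer sum, and the connecting isomorphisms of the long exact sequences in $R(T)\textbf{-Mod}$ are group isomorphisms, hence heap isomorphisms.
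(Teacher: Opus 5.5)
Your proposal is correct and is essentially the paper's argument. The paper obtains Lemma~\ref{yy} directly from Theorem~\ref{nm}, Theorem~\ref{pp} and \cite[Lemma 2.17]{rb}, exactly as you do: pass via $\mathcal{G}$ to an exact sequence of $R(T)$-modules with Gorenstein projective middle terms, apply Holm's dimension-shifting lemma there, and transport the Ext heaps back. Your additional checks (Proposition~\ref{zs}(ii) for exactness, Theorem~\ref{vv} for $pd_{R(T)}(\mathcal{G}_{e_L}(L))<\infty$, and enough projectives via $\mathcal{H}$ of free modules and Theorem~\ref{th1}) only make explicit hypotheses that the paper leaves implicit.
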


\begin{lemma}\label{kk}
A heap of $T$-modules $M$ is a BP Gorenstein projective heap of $T$-modules if and only if $M$ belongs to the left orthogonal class $^{\bot}\mathcal{P}(TH)$ and admits a co-proper right $\mathcal{P}(TH)$-resolution. Furthermore, if $\mathcal{P}$ is a BasePoint-Compatible complete projective resolution, then $\operatorname{Hom}_{TH}(\mathcal{P}, L)$ is BasePoint-Compatible for all heaps of $T$-modules $L$ with finite BP projective dimension. Consequently, if $M$ is BP Gorenstein projective, then $\operatorname{Ext}_{TH}^{i}(M, L)=e_i$ for all $i>0$ and all heaps of $T$-modules $L$ with finite BP projective dimension.
\end{lemma}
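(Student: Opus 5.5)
The plan is to transfer everything to $R(T)$-modules through the functors $\mathcal{G}$ and $\mathcal{H}$, invoke the classical characterization of Gorenstein projective modules there, and transport the conclusions back. The classical statement is that a module is Gorenstein projective if and only if it lies in ${}^{\bot}\mathcal{P}$ and admits a co-proper right projective resolution; the companion facts are that a totally acyclic complex stays $\operatorname{Hom}$-exact against modules of finite projective dimension (\cite{rb}), and that $\operatorname{Ext}^i(M,L)=0$ for $i>0$ when $M$ is Gorenstein projective and $\operatorname{pd}L<\infty$.

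\emph{First equivalence.} Suppose $M$ is BP Gorenstein projective, with BasePoint-Compatible complete projective resolution $\mathcal{P}$ and $M\cong\operatorname{Im}(P_0\to P^0)$. I would truncate $\mathcal{P}$ to get $M\hookrightarrow P^0\to P^1\to\cdots$. This truncation is still BasePoint-Compatible, using the base points inherited from $\mathcal{P}$; choosing them consistently is the only point needing care. Each $P^i$ is projective, so this is a right $\mathcal{P}(TH)$-resolution. It is co-proper because $\operatorname{Hom}^\bullet_{TH}(-,Q)$ applied to it is a truncation of the BasePoint-Compatible complex $\operatorname{Hom}^\bullet_{TH}(\mathcal{P},Q)$. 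For the orthogonality, Theorem \ref{pp} gives $\operatorname{Ext}^i_{TH}(M,Q)\cong\operatorname{Ext}^i_{R(T)}(\mathcal{G}_{e_m}(M),\mathcal{G}_{e_Q}(Q))$. By Theorem \ref{nm} and Theorem \ref{th1}, the right-hand side is the Ext between a Gorenstein projective and a projective $R(T)$-module, so it vanishes. The zero class corresponds under $\mathcal{H}$ to the distinguished element $e_i$, hence $M\in{}^{\bot}\mathcal{P}(TH)$.

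Conversely, suppose $M\in{}^{\bot}\mathcal{P}(TH)$ and $M$ has a co-proper right resolution $\mathbf{Q}$. Applying $\mathcal{G}$ together with Proposition \ref{zs}(ii), Construction \ref{oo} and Theorem \ref{th1}, I would get a right projective resolution $\mathcal{G}(\mathbf{Q})$ of $\mathcal{G}_{e_m}(M)$ that is $\operatorname{Hom}_{R(T)}(-,P)$-exact for every projective $P$. Here I use that every projective $R(T)$-module is $\mathcal{G}_{e_Q}(Q)$ for the projective heap $Q=\mathcal{H}(P)$. Theorem \ref{pp} also gives $\mathcal{G}_{e_m}(M)\in{}^{\bot}\mathcal{P}(R(T))$. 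The classical characterization then makes $\mathcal{G}_{e_m}(M)$ Gorenstein projective, and Theorem \ref{nm} makes $M$ BP Gorenstein projective. By Proposition \ref{zs}(i), the choice of base point is irrelevant.

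\emph{Second and third assertions.} Let $L$ have finite BP projective dimension. By Theorem \ref{vv}, $\operatorname{pd}_{R(T)}\mathcal{G}_{e_L}(L)<\infty$. The complex $\mathcal{G}(\mathcal{P})$ is totally acyclic, so $\operatorname{Hom}_{R(T)}(\mathcal{G}(\mathcal{P}),\mathcal{G}_{e_L}(L))$ is exact. I would prove this by induction on the projective dimension, using the short exact sequences of the complexes $\operatorname{Hom}(\mathcal{G}(\mathcal{P}),-)$ induced by a finite projective resolution of $\mathcal{G}_{e_L}(L)$; this works because $\mathcal{G}(\mathcal{P})$ consists of projectives. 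Construction \ref{oo} identifies $\operatorname{Hom}^\bullet_{TH}(\mathcal{P},L)$ with $\mathcal{H}$ of this complex, so Proposition \ref{zs}(ii) makes it BasePoint-Compatible. The preceding Proposition gives $\operatorname{Hom}_{TH}(\mathcal{P},L)\cong\operatorname{Hom}^\bullet_{TH}(\mathcal{P},L)\times L$ termwise, and I would check that this product sequence with the constant factor $L$ remains BasePoint-Compatible. The final Ext vanishing then follows, either from Theorem \ref{pp} combined with the classical vanishing, or directly by computing Ext from the truncated complex.

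The main obstacle is the bookkeeping of base points. One must make sure that truncations and the $\times L$ factor preserve BasePoint-Compatibility, and that ``$=e_i$'' really corresponds to the zero class under the isomorphism of Theorem \ref{pp}. I would handle both by fixing base points once, along $\mathcal{P}$, and citing Proposition \ref{zs}(i) and (iii).
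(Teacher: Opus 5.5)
Your route is the paper's: its proof just cites Proposition \ref{zs}, Theorems \ref{nm} and \ref{vv}, Lemma \ref{yy} and Holm's Proposition 2.3, i.e.\ it transfers along $\mathcal{G}$ and $\mathcal{H}$ and quotes the module result. Your treatment of the first equivalence and of the final Ext-vanishing is a more detailed version of that and is fine.

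The step that fails is the last one in your treatment of the ``Furthermore'' clause. There you plan to check that $\operatorname{Hom}_{TH}(\mathcal{P},L)\cong\operatorname{Hom}^\bullet_{TH}(\mathcal{P},L)\times L$ stays BasePoint-Compatible. With compatible base points, the map on the $L$-factor in the preceding Proposition is the identity: $g_{i-1}(q)=\tau^{q}_{e_Q}\alpha_i(e_i)=q$. A sequence $\mathcal{A}\times L$ with maps $d_i\times\mathrm{id}_L$ is never BasePoint-Compatible once $L$ has two elements. The image of $d_{i-1}\times\mathrm{id}_L$ is $d_{i-1}(A_{i-1})\times L$, while the fibre of $d_i\times\mathrm{id}_L$ over a base point $(a_{i+1},l)$ is $d_i^{-1}(a_{i+1})\times\{l\}$.

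The same failure can be seen without the product decomposition. By (HM2), $t\rhd_l l=[t\rhd_e l,t\rhd_e l,l]=l$, so every constant map $c_l$ is a morphism of heaps of $T$-modules. Every $c_l$ lies in the image of $\operatorname{Hom}_{TH}(P^1,L)\to\operatorname{Hom}_{TH}(P^0,L)$. But a fibre of $\operatorname{Hom}_{TH}(P^0,L)\to\operatorname{Hom}_{TH}(P_0,L)$ contains at most one constant map, since $P_0\neq\varnothing$. So for $|L|\geq 2$ the full Hom sequence is not BasePoint-Compatible for any choice of base points, and no bookkeeping will rescue this step.

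The ``Furthermore'' clause is therefore only true when read with $\operatorname{Hom}^\bullet_{TH}(\mathcal{P},L)$. That is also how BasePoint-Compatible complete projective resolutions are defined, so it is evidently the intended meaning. Under that reading your argument is already complete once Construction \ref{oo} and Proposition \ref{zs}(ii) make $\operatorname{Hom}^\bullet_{TH}(\mathcal{P},L)$ BasePoint-Compatible, so drop the product step. Nothing else in your proof depends on it: the Ext-vanishing is computed from $\operatorname{Hom}^\bullet_{TH}$, or obtained from Theorem \ref{pp}, exactly as you say.
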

\begin{proof}
Following by Proposition \ref{zs}, Theorem \ref{nm}, Theorem \ref{vv}, Lemma \ref{yy} and \cite[Proposition 2.3]{rb}.
\end{proof}

\begin{proposition}
$\mathcal{BPGP}(TH)$ possesses the following three properties.
\begin{enumerate}
    \item[\rm(i)] $\mathcal{P}(TH) \subseteq \mathcal{BPGP}(TH)$;
    \item[\rm(ii)] For any BasePoint-Compatible sequence of heaps of $T$-modules $G'\hookrightarrow G \twoheadrightarrow G''$, if $G''\in \mathcal{BPGP}(TH)$, then $G'\in \mathcal{BPGP}(TH)$ if and only if $G\in \mathcal{BPGP}(TH)$;
    \item[\rm(iii)] $\mathcal{BPGP}(TH)$ is closed under arbitrary direct sums and direct summands.
\end{enumerate}
\end{proposition}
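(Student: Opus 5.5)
The plan is to move each of the three properties across the base-point correspondence: Theorem \ref{nm} (a heap $M$ is BP Gorenstein projective iff $\mathcal{G}_{e_m}(M)$ is a Gorenstein projective $R(T)$-module) together with Proposition \ref{zs} (BasePoint-Compatible sequences correspond to exact sequences of $R(T)$-modules under $\mathcal{G}$ and $\mathcal{H}$). Once on the module side, we invoke the classical facts about $\mathcal{GP}(R(T))$ from \cite{rb}: it contains the projectives, it is projectively resolving, and it is closed under direct sums and summands.

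For (i), take $P\in\mathcal{P}(TH)$. By Theorem \ref{th1}, $\mathcal{G}_{e_p}(P)$ is a projective $R(T)$-module, hence Gorenstein projective. Theorem \ref{nm} then gives $P\in\mathcal{BPGP}(TH)$.

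For (ii), let $G'\hookrightarrow G\twoheadrightarrow G''$ be BasePoint-Compatible with base points $e',e,e''$ satisfying $f(e')=e$ and $g(e)=e''$. By Proposition \ref{zs}(ii), the sequence $0\to\mathcal{G}_{e'}(G')\to\mathcal{G}_{e}(G)\to\mathcal{G}_{e''}(G'')\to0$ is exact in $R(T)\textbf{-Mod}$; injectivity holds because $f$ is injective. Theorem \ref{nm} holds for every base point, so membership in $\mathcal{BPGP}(TH)$ is equivalent to Gorenstein projectivity of $\mathcal{G}$ at any chosen point. The statement therefore reduces to \cite[Theorem 2.5]{rb}: if $C$ is Gorenstein projective in a short exact sequence $0\to A\to B\to C\to 0$, then $A$ is Gorenstein projective iff $B$ is.

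For (iii), a direct sum in $T\textbf{-HMod}$ means the coproduct $\bigsqcup_i G_i$ of Construction \ref{xu}. Its retraction is
$$\mathcal{G}\Big(\bigsqcup_{i\in I}G_i\Big)\cong\Big(\bigoplus_{i\in I}\mathcal{G}_{e_i}(G_i)\Big)\oplus F,$$
where $F$ is free on $I\setminus\{i_0\}$ over $R(T)_u$, hence free over $R(T)$ by the preceding lemma. This module is a direct sum of Gorenstein projectives and a projective, so it is Gorenstein projective. For summands, we treat $G$ as a coproduct summand of $G\sqcup G'$ exactly as in the proof of Theorem \ref{th2}. Passing through $\Phi'$, $\mathcal{G}_{e}(G)$ becomes a direct summand of $\mathcal{G}(G\sqcup G')$ modulo free $R(T)_u$ pieces. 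Since $\mathcal{GP}(R(T))$ is closed under direct summands \cite[Theorem 2.5]{rb}, $\mathcal{G}_e(G)$ is Gorenstein projective.

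The main obstacle is (iii), because coproducts of heaps are not the naive direct sums: they carry the extra free summand $F$, and the summand relation has to be transported through the equivalence of Corollary \ref{zzz}. I would handle this by reusing the bookkeeping from Theorem \ref{th2}. The extra summands are all free, and adding or stripping free summands does not change Gorenstein projectivity, since the class is projectively resolving and closed under sums.
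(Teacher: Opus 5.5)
Your proposal is correct and follows essentially the same route as the paper: (i) and (ii) are transported to $R(T)$-modules via Proposition \ref{zs} and Theorem \ref{nm} and then settled by \cite[Theorem 2.5]{rb}, and (iii) comes from the explicit coproduct of Construction \ref{xu}, whose retraction differs from $\bigoplus_i\mathcal{G}_{e_i}(G_i)$ only by a free summand. Your treatment of summands is more detailed than the paper's, which simply says it follows from the construction of coproducts, but the detour through $\Phi'$ is unnecessary, since Construction \ref{xu} already exhibits $\mathcal{G}_e(G)$ as an honest direct summand of $\mathcal{G}(G\sqcup G')\cong\mathcal{G}_e(G)\oplus\mathcal{G}_{e'}(G')\oplus R(T)_u$.
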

\begin{proof}
From Proposition \ref{zs}, Theorem \ref{nm} and \cite[Theorem 2.5]{rb}, $\rm(i)$, $\rm(ii)$ follow directly. Since $R(T)_u$ is a free $R(T)$-module which is a Gorenstein projective module, by the construction of coproduct in $T\textbf{-HMod}$, $\rm(iii)$ follows.
\end{proof}

\begin{proposition}\label{ddf}
The following two conclusions hold.
\begin{enumerate}
    \item[\rm(i)] Let $M$ be any heap of $T$-modules and consider two BasePoint-Compatible sequences of heaps of $T$-modules
    $$K_n\hookrightarrow G_{n-1}\rightarrow \cdots \rightarrow G_0 \twoheadrightarrow M,$$
    $$\tilde{K_n}\hookrightarrow \tilde{G_{n-1}}\rightarrow \cdots \rightarrow \tilde{G_0} \twoheadrightarrow M,$$
    where $G_0,\ldots,G_{n-1}$ and $\tilde{G_0},\ldots,\tilde{G_{n-1}}$ are BP Gorenstein projective heaps of $T$-modules. Then $K_n$ is BP Gorenstein projective if and only if $\tilde{K_n}$ is BP Gorenstein projective;
    \item[\rm(ii)] Let $G' \hookrightarrow G \twoheadrightarrow M$ be a BasePoint-Compatible sequence of heaps of $T$-modules, where $G$ and $G'$ are BP Gorenstein projective heaps of $T$-modules, and $\operatorname{Ext}_{TH}^1(M,Q)=e_1$ for all projective heaps of $T$-modules $Q$. Then $M$ is a BP Gorenstein projective heap of $T$-modules.
\end{enumerate}
\end{proposition}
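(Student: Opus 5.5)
The plan is to transport both statements to $R(T)\textbf{-Mod}$ via the functors $\mathcal{G}$ and $\mathcal{H}$. There we invoke the classical results for Gorenstein projective modules (\cite[Proposition 2.18]{rb} for (i) and \cite[Corollary 2.11]{rb} for (ii), or their standard proofs), and then transport back using Theorem \ref{nm}. The dictionary is already in place:
\begin{itemize}
\item[(a)] Proposition \ref{zs}(ii) turns a BasePoint-Compatible sequence with base points $e_i$ into an exact sequence of $R(T)$-modules, and $\mathcal{H}$ reverses this.
\item[(b)] Theorem \ref{nm} identifies BP Gorenstein projectivity of a heap $X$ with Gorenstein projectivity of $\mathcal{G}_{e_x}(X)$ for one (equivalently every) base point.
\item[(c)] Theorem \ref{pp} identifies $\operatorname{Ext}_{TH}^i(M,N)$ with $\operatorname{Ext}^i_{R(T)}(\mathcal{G}_{e_M}(M),\mathcal{G}_{e_N}(N))$, independently of base points. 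The neutral class $e_i$ corresponds to the zero class.
\end{itemize}

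For (i), choose base points making both sequences BasePoint-Compatible, say with $e_M$ and $\tilde e_M$ on $M$. Apply $\mathcal{G}$ to obtain exact sequences
$$0\to \mathcal{G}(K_n)\to \mathcal{G}(G_{n-1})\to\cdots\to\mathcal{G}(G_0)\to \mathcal{G}_{e_M}(M)\to 0$$
and the analogous one ending in $\mathcal{G}_{\tilde e_M}(M)$. The translation $\tau^{\tilde e_M}_{e_M}$ is an isomorphism $\mathcal{G}_{e_M}(M)\cong\mathcal{G}_{\tilde e_M}(M)$ of $R(T)$-modules. Composing the last map of the second sequence with it, both become Gorenstein projective resolutions of length $n$ of the same module. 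By (b), all the $\mathcal{G}(G_j)$ and $\mathcal{G}(\tilde G_j)$ are Gorenstein projective. The classical result (via Schanuel-type comparison using closure of $\mathcal{GP}(R(T))$ under extensions and kernels of epimorphisms, \cite[Theorem 2.5]{rb}) says $\mathcal{G}(K_n)$ is Gorenstein projective if and only if $\mathcal{G}(\tilde K_n)$ is. Applying (b) again gives the claim.

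For (ii), apply $\mathcal{G}$ to get a short exact sequence $0\to\mathcal{G}(G')\to\mathcal{G}(G)\to\mathcal{G}_{e_M}(M)\to 0$ with Gorenstein projective first two terms. By (c), every projective $R(T)$-module $Q_0$ equals $\mathcal{G}(\mathcal{H}(Q_0))$ up to isomorphism, and $\mathcal{H}(Q_0)$ is projective by Theorem \ref{th1}. Hence the hypothesis gives $\operatorname{Ext}^1_{R(T)}(\mathcal{G}_{e_M}(M),Q_0)=0$. The module statement then follows by the standard argument. Take a projective $P\twoheadrightarrow\mathcal{G}(G)$ with Gorenstein projective kernel, and a totally acyclic coresolution of $\mathcal{G}(G')$. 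Splice these, using the $\operatorname{Ext}^1$ vanishing to lift maps into projectives so that $\operatorname{Hom}(-,Q_0)$-exactness is preserved. This gives Gorenstein projectivity of $\mathcal{G}_{e_M}(M)$, equivalently via \cite[Corollary 2.11]{rb}. Then (b) concludes.

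The main obstacle is bookkeeping rather than mathematics. We must check that the base points forced by BasePoint-Compatibility are compatible with the translation isomorphisms, so that $\mathcal{G}$ genuinely yields exact sequences with the same end term. We must also make sure that "$\operatorname{Ext}^1_{TH}=e_1$ for all projective heaps" faithfully captures vanishing against all projective modules, including the nonempty-heap requirement, so that $\mathcal{H}(Q_0)$ is an admissible test object. Proposition \ref{zs}(i) and the base-point independence in Theorem \ref{pp} should handle both.
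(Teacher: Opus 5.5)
Your proposal follows the same route as the paper, whose proof transports both statements to $R(T)$-modules via Proposition~\ref{zs} and Theorem~\ref{nm} and then invokes Holm's module-theoretic results; the translation of the $\operatorname{Ext}$ hypothesis in (ii) through Theorem~\ref{pp}, which you make explicit, is left implicit in the paper. The only discrepancy is bibliographic: the paper cites \cite[Proposition 2.7]{rb} (not Proposition 2.18) for (i), together with \cite[Corollary 2.11]{rb} for (ii).
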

\begin{proof}
Following by Proposition \ref{zs}, Theorem \ref{nm} and \cite[Proposition 2.7, Corollary 2.11]{rb}.
\end{proof}

\begin{proposition}\label{ii}
Let $ K \hookrightarrow G \twoheadrightarrow M $ be a BasePoint-Compatible sequence of heaps of $T$-modules, where $G$ is BP Gorenstein projective. If $M$ is BP Gorenstein projective, then so is $K$. Otherwise we get $BPGpd_{TH} K = BPGpd_{TH} M - 1 \geq 0$.
\end{proposition}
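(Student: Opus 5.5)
Since $K\hookrightarrow G\twoheadrightarrow M$ is BasePoint-Compatible, we may fix base points $e_K\in K$, $e_G\in G$, $e_M\in M$ with the sequence maps sending $e_K\mapsto e_G\mapsto e_M$. We then apply $\mathcal{G}$ to get, by Proposition \ref{zs}(ii), a short exact sequence of $R(T)$-modules
$$0\to \mathcal{G}_{e_K}(K)\to \mathcal{G}_{e_G}(G)\to \mathcal{G}_{e_M}(M)\to 0 .$$
By Theorem \ref{nm}, $\mathcal{G}_{e_G}(G)$ is a Gorenstein projective $R(T)$-module. Theorem \ref{nm} also says that BP Gorenstein projectivity of a heap is equivalent to Gorenstein projectivity of $\mathcal{G}$ at any (equivalently every) base point, since different base points give isomorphic modules via translations. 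The whole statement is therefore transported to the classical module-theoretic statement over $R(T)$, and we prove it there.

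For the first assertion, suppose $M$ is BP Gorenstein projective. Then both $\mathcal{G}_{e_G}(G)$ and $\mathcal{G}_{e_M}(M)$ are Gorenstein projective. Since the class of Gorenstein projective modules is projectively resolving and in particular closed under kernels of epimorphisms (\cite[Theorem 2.5]{rb}), $\mathcal{G}_{e_K}(K)$ is Gorenstein projective. Theorem \ref{nm} then gives that $K$ is BP Gorenstein projective. Alternatively, this follows directly from part (ii) of the preceding proposition on $\mathcal{BPGP}(TH)$, applied to the BasePoint-Compatible sequence $K\hookrightarrow G\twoheadrightarrow M$.

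For the second assertion, suppose $M$ is not BP Gorenstein projective, so $\mathcal{G}_{e_M}(M)$ is not Gorenstein projective and $n:=Gpd_{R(T)}\mathcal{G}_{e_M}(M)\geq 1$, possibly infinite. We use the classical fact \cite[Proposition 2.18]{rb}: in a short exact sequence $0\to K'\to G'\to M'\to 0$ with $G'$ Gorenstein projective and $M'$ not Gorenstein projective, one has $Gpd\,K' = Gpd\,M'-1$. If one prefers to derive it, for $n$ finite we splice a Gorenstein projective resolution of $K'$ of length $m$ with $G'$ to obtain $Gpd\,M'\le m+1$. Conversely, Lemma \ref{yy} and Lemma \ref{kk}, in their module versions via Theorem \ref{pp}, give $\operatorname{Ext}^{i}(K',Q)\cong\operatorname{Ext}^{i+1}(M',Q)$ for projective $Q$ and $i>0$. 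Combining this with the Ext-characterization of Gorenstein projective dimension from \cite[Theorem 2.20]{rb}, which the paper's Theorem \ref{th4} mirrors, gives $Gpd\,K'\le n-1$. The case $n=\infty$ follows from the same inequality $Gpd\,M'\le Gpd\,K'+1$. Finally, Theorem \ref{vv} identifies $BPGpd_{TH}K=Gpd_{R(T)}\mathcal{G}_{e_K}(K)$ and $BPGpd_{TH}M=Gpd_{R(T)}\mathcal{G}_{e_M}(M)$. The identification for $\infty$ needs a comment: since BP sequences correspond bijectively to exact sequences under $\mathcal{G}$ and $\mathcal{H}$, the equality in Theorem \ref{vv} holds without the finiteness hypothesis. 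This yields $BPGpd_{TH}K=BPGpd_{TH}M-1\geq 0$.

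The main obstacle is this last bookkeeping. Theorem \ref{vv} is stated only under finite BP Gorenstein projective dimension, so we must check that the correspondence of Proposition \ref{zs}(ii) also shows both sides are infinite simultaneously. We must also check that the base points chosen on $K,G,M$ are compatible with those used in Theorem \ref{vv}. Both follow from Proposition \ref{zs}(i),(iii): changing base points yields isomorphic module sequences, and isomorphic sequences preserve the BasePoint-Compatible property.
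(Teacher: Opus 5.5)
Your proposal is correct and follows the paper's argument: transport the sequence to $R(T)$-modules via Proposition \ref{zs}, Theorem \ref{nm} and Theorem \ref{vv}, then invoke the module-theoretic result \cite[Corollary 2.18]{rb}, which you cite as Proposition 2.18. Your remark that the equality in Theorem \ref{vv} also holds when both dimensions are infinite is a worthwhile clarification the paper leaves implicit; the optional self-derivation of the classical fact, however, is unnecessary and as written applies \cite[Theorem 2.20]{rb} to $K'$ before establishing $\mathrm{Gpd}\,K'<\infty$ (a finiteness the standard development obtains from 2.18 itself), so it is best dropped.
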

\begin{proof}
Following by Proposition \ref{zs}, Theorem \ref{nm}, Theorem \ref{vv} and \cite[Corollary 2.18]{rb}.
\end{proof}
\subsection {A functorial description of the BP Gorenstein projective dimension}
\begin{theorem}\label{th4}
Let $M$ be a heap of $T$-modules with finite BP Gorenstein projective dimension and $n$ an integer. Then the following conditions are equivalent.
\begin{enumerate}
    \item[\rm(i)] $BPGpd_{TH}(M)\leq n $;
    \item[\rm(ii)] ${\rm Ext}_{TH}^{i}(M, L) = e_i$ for all $i > n$, and for all heaps of $T$-modules $L$ with finite $BPpd_{TH}(L)$;
    \item[\rm(iii)] ${\rm Ext}_{TH}^{i}(M, Q) = e_i$ for all $i > n$, and for all projective heaps of $T$-modules $Q$;
    \item[\rm(iv)] For every BasePoint-Compatible sequence of heaps of $T$-modules $K_{n} \hookrightarrow G_{n-1} \to \cdots \to G_{0} \twoheadrightarrow M$, where $G_{0}, \dots, G_{n-1}$ are BP Gorenstein projective, $K_{n}$ is BP Gorenstein projective.
\end{enumerate}
\end{theorem}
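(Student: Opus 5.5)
The plan is to transport the whole statement to $R(T)\textbf{-Mod}$ and invoke the classical functorial description of Gorenstein projective dimension, \cite[Theorem 2.20]{rb}. Fix $e_m\in M$ and set $N:=\mathcal{G}_{e_m}(M)$. By Theorem \ref{vv}, $BPGpd_{TH}(M)=Gpd_{R(T)}(N)$, so $N$ has finite Gorenstein projective dimension and (i) is equivalent to $Gpd_{R(T)}(N)\le n$. For each of (ii)--(iv) I will show it is equivalent to the matching condition for $N$:
\begin{enumerate}
\item[(ii$'$)] $\operatorname{Ext}^i_{R(T)}(N,L')=0$ for $i>n$ and all $L'$ with $pd_{R(T)}(L')<\infty$;
\item[(iii$'$)] $\operatorname{Ext}^i_{R(T)}(N,Q')=0$ for $i>n$ and all projective $Q'$;
\item[(iv$'$)] in every exact sequence $0\to K'\to G'_{n-1}\to\cdots\to G'_0\to N\to 0$ with the $G'_j$ Gorenstein projective, $K'$ is Gorenstein projective.
\end{enumerate}
Then \cite[Theorem 2.20]{rb} finishes the proof.

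For (ii)$\Leftrightarrow$(ii$'$) and (iii)$\Leftrightarrow$(iii$'$) I use Theorem \ref{pp}. It gives an isomorphism of abelian heaps $\operatorname{Ext}^i_{TH}(M,L)\cong\operatorname{Ext}^i_{R(T)}(N,\mathcal{G}_{e_L}(L))$, and this isomorphism sends the distinguished class $e_i$ (the split/trivial extension, i.e. the zero of the Baer sum) to $0$. So $\operatorname{Ext}^i_{TH}(M,L)=e_i$ exactly when the module Ext vanishes. The classes of test objects also match:
\begin{itemize}
\item By Theorem \ref{vv}, $L$ has finite $BPpd_{TH}$ iff $\mathcal{G}_{e_L}(L)$ has finite projective dimension.
\item By Theorem \ref{th1}, $Q$ is a projective heap iff $\mathcal{G}_{e_Q}(Q)$ is a projective module.
\item Every $R(T)$-module $L'$ arises as $\mathcal{G}_{e}(\mathcal{H}(L'))$ up to isomorphism, since $\mathcal{G}\mathcal{H}\cong \mathrm{Id}$.
\end{itemize}
Hence quantifying over heaps is the same as quantifying over modules.

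For (iv)$\Leftrightarrow$(iv$'$) I use Proposition \ref{zs}(ii). A BasePoint-Compatible sequence $K_n\hookrightarrow G_{n-1}\to\cdots\to G_0\twoheadrightarrow M$ becomes, under $\mathcal{G}$ at compatible base points, an exact sequence of modules ending in $\mathcal{G}(M)\cong N$. Conversely, applying $\mathcal{H}$ to an exact module sequence gives a BasePoint-Compatible sequence, and its last term can be identified with $M$ via $\mathcal{H}\mathcal{G}_{e_m}(M)=M$. By Theorem \ref{nm}, BP Gorenstein projectivity of heaps corresponds to Gorenstein projectivity of the $\mathcal{G}$'s, so (iv) and (iv$'$) translate into each other. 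Proposition \ref{zs}(i) ensures the choice of base points along the sequence does not matter. Alternatively, (i)$\Rightarrow$(iv) can be read off from Proposition \ref{ddf}(i) together with Proposition \ref{ii}.

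I expect the main obstacle to be bookkeeping rather than mathematics. First, one must check that the base point $e_M$ used in Theorem \ref{pp} and the base point of the sequence can be taken to be the one appearing at $M$ in (iv). Second, one must check that the neutral class $e_i$ really corresponds to $0$ under the isomorphism of Theorem \ref{pp}, because the heap $\operatorname{Ext}^i_{TH}$ has no intrinsic zero. I would settle the second point by noting that the split sequence $\mathcal{H}$ of the trivial extension is exactly the one defining $e_i$ via $H^i(\operatorname{Hom}^\bullet_{TH}(P^\bullet,X))$.
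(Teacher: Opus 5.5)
Your proposal is correct, but it is organized differently from the paper's proof. The paper does not transport the theorem as a whole. It re-runs Holm's argument inside $T\textbf{-HMod}$:
\begin{itemize}
\item (i)$\Rightarrow$(ii) is dimension shifting with Lemma~\ref{yy}, plus the vanishing for BP Gorenstein projectives from Lemma~\ref{kk}.
\item (iii)$\Rightarrow$(iv) first gets $K_n\in{}^{\bot}\mathcal{P}(TH)$ from Lemma~\ref{yy} and $BPGpd_{TH}(K_n)<\infty$ from Proposition~\ref{ii}. It then splits a finite BP Gorenstein projective resolution of $K_n$ into short BasePoint-Compatible sequences and climbs back up with Proposition~\ref{ddf}(ii).
\item (ii)$\Rightarrow$(iii) and (iv)$\Rightarrow$(i) are immediate.
\end{itemize}

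You instead translate each of the four conditions into its $R(T)$-module counterpart and cite \cite[Theorem 2.20]{rb} once. The heap-level lemmas the paper uses were themselves obtained by transport along $\mathcal{G}$ and $\mathcal{H}$. So both proofs ultimately rest on the same dictionary: Theorems~\ref{th1}, \ref{nm}, \ref{vv}, \ref{pp} and Proposition~\ref{zs}. Your route is shorter and makes that dependence explicit. The paper's route shows the classical machinery (dimension shifting, splicing short sequences) working directly with BasePoint-Compatible sequences, but it is no more intrinsic in substance.

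Two small points on your bookkeeping.

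First, your second ``obstacle'' disappears. In the paper, ${\rm Ext}^i_{TH}(M,L)=e_i$ just means that this cohomology heap collapses to a single class. Any bijection with ${\rm Ext}^i_{R(T)}(\mathcal{G}_{e_M}(M),\mathcal{G}_{e_L}(L))$ therefore already gives the equivalence with vanishing, and you need not track where the split class goes.

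Second, Theorem~\ref{vv} is stated only for heaps already known to have finite BP projective dimension. For the direction of (ii)$\Leftrightarrow$(ii$'$) where you start from a module $L'$ with $pd_{R(T)}(L')<\infty$, you need $BPpd_{TH}(\mathcal{H}(L'))<\infty$. Get this by applying $\mathcal{H}$ to a finite projective resolution of $L'$ and using Proposition~\ref{zs}(ii) together with Theorem~\ref{th1}, rather than citing Theorem~\ref{vv} directly.
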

\begin{proof}
$\rm(i) \Rightarrow (ii)$: Since $BPGpd_{TH}(M)\leq n $, there exists a BasePoint-Compatible sequence
$$G_n\hookrightarrow G_{n-1} \rightarrow \cdots \rightarrow G_0\twoheadrightarrow M,$$
where $G_0,\ldots,G_{n}$ are BP Gorenstein projective heaps of $T$-modules. By Lemma \ref{yy} and Lemma \ref{kk}, $\operatorname{Ext}_{TH}^i(G_n,L)\cong \operatorname{Ext}_{TH}^{i+n}(M,L)$ for any integers $i>0$ and $\operatorname{Ext}_{TH}^{i}(G_n, L)=e_i$ for all $i>0$. Therefore, $Ext_{TH}^{i}(M, L) = e_i$ for all $i > n$, and for all heaps of $T$-modules $L$ with finite $BPpd_{TH}(L)$.\\
$\rm(ii) \Rightarrow (iii)$: It's obvious.\\
$\rm(iii) \Rightarrow (iv)$: For any BasePoint-Compatible sequence of heaps of $T$-modules
$$K_{n} \hookrightarrow G_{n-1} \to \cdots \to G_{0} \twoheadrightarrow M,$$
where $G_{0}, \dots, G_{n-1}$ are BP Gorenstein projective. From Lemma \ref{yy}, $\operatorname{Ext}_{TH}^i(K_n,Q)\cong \operatorname{Ext}_{TH}^{i+n}(M,Q)$ for any integers $i>0$ and any projective heap of $T$-modules. By (iii), we obtain $\operatorname{Ext}_{TH}^i(K_n,Q)=e_i$ for any integers $i>0$ and for any projective heap of $T$-modules, which means $K_n\in ^\bot\mathcal{P}(TH)$. Since $K_{n} \hookrightarrow G_{n-1} \to \cdots \to G_{0} \twoheadrightarrow M$ is BasePoint-Compatible, there exists $e_i\in G_i$, $e_n\in K_n$, $e_{-1}\in M$ such that $e_{i-1}=f_{i}(e_i)$, $f_i(G_i)= f^{-1}_{i-1}(e_{i-2})$, $f_n(K_n)= f^{-1}_{n-1}(e_{n-2})$, where $f_{i}:G_i\rightarrow G_{i-1}$, $f_{n}:K_n\rightarrow G_{n-1}$, $f_{0}:G_0\rightarrow M$. Now define $K_i=f^{-1}_i(e_{i-1})$, then we have n short BasePoint-Compatible sequences: $K_i\hookrightarrow G_{i-1} \twoheadrightarrow K_{i-1}$, where $K_0=M$. Since $BPGpd(M)<\infty$, we can conclude that $BPGpd(K_n)<\infty$ by Proposition \ref{ii}. Now let $BPGpd(K_n)= m <\infty$, then there exists a (BP) Gorenstein projective resolution at a length of m.
$$ C_m \hookrightarrow C_{m-1} \rightarrow \cdots \rightarrow C_0 \twoheadrightarrow K_n,$$
where $C_{0}, \dots, C_{m}$ are BP Gorenstein projective. So we have m short BasePoint-Compatible sequences $A_i\hookrightarrow C_{i-1} \twoheadrightarrow A_{i-1}$ for $i=1,\ldots,m$, where $A_m=C_m$ and $A_0=K_n$. By Lemma \ref{yy},
$$\operatorname{Ext}_{TH}^1(A_{n-i},Q)\cong \operatorname{Ext}_{TH}^{i}(K_n,Q)=e_i,$$
for all $i=1,\ldots,m$ and all projective heaps of $T$-modules $Q$. Successively apply Proposition \ref{ddf}, we can conclude that $A_{0}, \dots, A_{m}$ are BP Gorenstein projective. In particular, $A_0=K_n$ is BP Gorenstein projective.\\
$\rm(iv) \Rightarrow (i)$: It's obvious.
\end{proof}

\section{Gorenstein Truss}

The definition of (PA) Gorenstein trusses is introduced. It is shown that a unital truss $T$ is a PA Gorenstein truss if and only if $R(T)$ is an Iwanaga-Gorenstein ring.

\begin{definition}{\rm \cite[Definition 3.25]{rtb}}
An ideal of a truss $(T,[- - -],\cdot)$ is a sub-heap $S$ of $(T,[- - -])$ such that $xs\in S$ and $sx\in S$ for all $x\in T$ and $s\in S$.
\end{definition}
In fact, the ideal defined above is a two-sided ideal. In the above definition, if the condition that $xs\in S$ is removed, it becomes the definition of a right ideal; if the condition that $sx\in S$ is removed, it becomes the definition of a left ideal.

\begin{remark}{\rm \cite[Remark 3.16]{rtb}}
Conditions for a sub-heap $P$ of $(T,[- - -],\cdot)$ to be a paragon are: for all $x\in T$ and all $p,p'\in P$,
$$[xp,xp',p']\in P \, ~and~ \, [px,p'x,p']\in P.$$
The first of equations defines a left paragon, while the second one defines a right paragon.
$$[xp',xp,p']\in P \, ~and~ \, [p'x,px,p']\in P.$$
Indeed, since $P$ is a sub-heap, if $[xp,xp',p']\in P$, then
$$ P\ni [p',[xp,xp',p'],p'] = [xp',xp,p'].$$
The converse follows from the equality $[xp,xp',p'] = [p',[xp',xp,p'],p']$. The equivalence of the right paragon identities is proven in a similar way.
\end{remark}

\begin{definition}
Let $(T, [---], \cdot)$ be a unital truss and $I$ a left or right ideal (paragon) of $T$.
If there exists a finite subset $X = \{x_1, x_2, \dots, x_n\} \subseteq T$ (for some $n \in \mathbb{N}^*$) such that
$$I = \bigcap \left\{ J \subseteq T \mid J \text{ is a left or right ideal (paragon) of } T \text{ and } X \subseteq J \right\},$$
then $I$ is called a finitely generated left or right ideal (paragon) of $T$.
The set $X$ is called a finite generating set of $I$, and we denote $I = TX$ ($XT$). If $I$ is both a finitely generated left ideal (paragon) and a finitely generated right ideal (paragon), then $I$ is called a finitely generated ideal (paragon).
\end{definition}

\begin{definition}
A unital truss $T$ is called a left (PA) Noetherian truss if every left ideal (paragon) of $T$ is finitely generated. Dually, a unital truss $T$ is called a right (PA) Noetherian truss if every right ideal (paragon) of $T$ is finitely generated. A unital truss that is both left and right (PA) Noetherian is called a (PA) Noetherian truss.
\end{definition}
Since every ideal of truss $T$ is paragon, it's clear that every PA Noetherian truss is Noetherian truss.

\begin{lemma}\label{fg}
A unital truss $T$ is a left {\rm PA} Noetherian truss if and only if $R(T)$ is a left Noetherian ring.
\end{lemma}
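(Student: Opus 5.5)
The plan is to set up an inclusion-preserving bijection between left paragons of $T$ and left ideals of $R(T)$, in a form that matches finite generation on the two sides. Once we have it, ascending chains or finitely generated objects transfer directly, and the equivalence follows.

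\textbf{Step 1 (the correspondence).} We use the fact, due to Brzeziński and Rybołowicz, that a non-empty sub-heap $P\subseteq T$ is a left paragon exactly when, for some (equivalently any) $p\in P$, the set
$$P-p=\{[q,p,e]\mid q\in P\}$$
is a left ideal of the ring $G(T;e)$. Here $e$ is a chosen base point and $G(T;e)$ has the retracted multiplication. This is the content of the paragon conditions $[xp,xp',p']\in P$ quoted in the remark above.

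Next, $R(T)$ is built from $T$ so that every left $R(T)$-submodule of $R(T)$ (that is, every left ideal) is generated as an abelian group by elements that are differences of elements of $T$, together with their products. Concretely, we will use the description of $R(T)$ from the construction of the universal ring extension in \cite{tb1}: $T$ sits in $R(T)$ as a coset of an ideal, and the heap differences of $T$ form a ring $\cong G(T;e)$.

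To a left paragon $P$ we assign $\Phi(P)$, the left ideal of $R(T)$ generated by the differences $\{p-p'\mid p,p'\in P\}$. Conversely, to a left ideal $J$ of $R(T)$ we assign $\Psi(J)=(t+J)\cap T$ for a suitable $t$, adjusting by the congruence class. We then check three things: both maps preserve inclusions, $\Psi\Phi$ is the identity up to the choice of coset representative, and $\Phi$ turns a finite generating set $X$ of $P$ into the finite set of differences $x_i-x_1$ generating $\Phi(P)$.

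\textbf{Step 2 (the two directions).} If $R(T)$ is left Noetherian, take a left paragon $P$. Then $\Phi(P)$ is finitely generated, and its generators can be chosen of the form $p-p'$. Their finitely many endpoints form a finite set $X\subseteq P$. The smallest paragon containing $X$ has the same $\Phi$-image, so by injectivity of $\Phi$ on paragons it equals $P$.

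Conversely, if every left paragon is finitely generated, take a left ideal $J$ of $R(T)$. We decompose $J$ using $R(T)\cong G(T;e)\oplus\mathbb{Z}$-type decompositions, as in the examples above. The part of $J$ lying in the difference ideal corresponds to a paragon through $\Psi$, and the remaining part is a subgroup of $\mathbb{Z}$, hence cyclic. Finite generation then follows.

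\textbf{Main obstacle.} The hard step is making the correspondence precise for left ideals of $R(T)$ that are not contained in the difference ideal $G(T;e)$. Such ideals have a nontrivial $\mathbb{Z}$-component, and there is no paragon whose $\Phi$-image is the whole of $J$. The first approach is the short exact sequence $0\to J\cap G\to J\to \mathbb{Z}$, where $G$ denotes the image of $G(T;e)$ in $R(T)$. By the paragon argument $J\cap G$ is finitely generated, and the image in $\mathbb{Z}$ is cyclic, so $J$ is finitely generated. Note that $J\cap G$ is a left $R(T)$-ideal, and since $T$ acts through $G$, a left $R(T)$-ideal inside $G$ is the same as a left $G(T;e)$-ideal stable under the $\mathbb{Z}$-part, so it is a paragon difference set. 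Verifying this identification carefully, and that the unitality of $T$ makes $R(T)$ unital so that left ideals are well behaved, is where the real work lies.
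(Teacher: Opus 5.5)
Your proposal is correct and follows essentially the paper's route. You realise $R(T)$ as the Dorroh-type extension $G(T;e)\oplus\mathbb{Z}$ with $T$ as the augmentation-one coset, match left paragons with left ideals of the retracted ring via $P\mapsto P-p$ (the paper does the same after translating the paragon to contain $1_T$), and treat a left ideal $J$ of $R(T)$ as $J\cap G$ plus one extra generator for its cyclic $\mathbb{Z}$-component; your exact sequence is a cleaner version of the paper's list of ideal shapes.

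One small repair is needed. $\Phi$ is not injective on all paragons, since parallel cosets $p+N\neq p'+N$ have the same image $N$. So in Step 2, argue instead that the paragon generated by $X$ is contained in $P$ and has the same difference ideal, hence equals $P$.
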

\begin{proof}
By \cite[Remark 2.10]{mb}, If $T$ is unital, then the ring $R(T)$ can be realised as the abelian group $\mathcal{G}_{1_T}(T)\oplus \mathbb{Z}$ with multiplication $(t,m)(s,n)=(ts+(n-1)t+(m-1)s,mn)$, where $(t,m),(s,n)\in \mathcal{G}_{1_T}(T)\oplus \mathbb{Z}$. Moreover, the multiplication of $\mathcal{G}_{1_T}(T)$ is $t\cdot s=ts-t-s$ for all $t,s \in \mathcal{G}_{1_T}(T)$. Therefore, $R(T)$ is the Dorroh extension of $\mathcal{G}_{1_T}(T)$. Consider a sub-ring $G'$ of $\mathcal{G}_{1_T}(T)$, $G'=\mathcal{G}_{1_T}(\mathcal{T}(G'))$, thus $G'$ can be denoted by $\mathcal{G}_{1_T}(G)$ where $G=\mathcal{T}(G')$. Since $G'$ is a sub-ring of $\mathcal{G}_{1_T}(T)$, $G=\mathcal{T}(G')$ is a sub-truss of $T$.

From the construction of the ideal of Dorroh extension of a ring. If $\mathcal{G}_{1_T}(T)$ is non-unital, then the left ideal of $R(T)$ is in the form of $\mathcal{G}_{1_T}(G)\oplus \{ 0 \}$ or $\mathcal{G}_{1_T}(G)\oplus \mathbb{Z}(a,d)$, the elements in $\mathcal{G}_{1_T}(G)\oplus \mathbb{Z}(a,d)$ is in the form of $(g+ka,kd)$ for all $g\in \mathcal{G}_{1_T}(G), k\in \mathbb{Z}$, where $\mathcal{G}_{1_T}(G)$ is the left ideal of $\mathcal{G}_{1_T}(T)$, $a\in \mathcal{G}_{1_T}(G), d\in \mathbb{Z}$. Otherwise, the left ideal of $R(T)$ is in the form of $\mathcal{G}_{1_T}(G)\oplus d\mathbb{Z}$, where $\mathcal{G}_{1_T}(G)$ is the left ideal of $\mathcal{G}_{1_T}(T)$, $d\in \mathbb{Z}$.

Now we claim that for a sub-heap $G$ of $T$, if $1_T \in G$, then $G$ is a left paragon of $T$ $\Longleftrightarrow$ $\mathcal{G}_{1_T}(G)$ is a left ideal of $\mathcal{G}_{1_T}(T)$. If $1_T \notin G$, then $G$ is a left paragon of $T$ $\Longleftrightarrow$ $\mathcal{G}_{1_T}(\tau^{1_T}_g(G))$ is a left ideal of $\mathcal{G}_{1_T}(T)$ for all $g\in G$.

Sufficiency: For a left paragon $G$ of $T$, $[ts,ts',s']\in G$ for any $t\in T,s,s'\in G$, let $s'=1_T$, then $[ts,ts',s']= [ts,t,1_T] = ts-t$ which means $t \cdot s= ts-t-s \in \mathcal{G}_{1_T}(G)$ for all $t \in \mathcal{G}_{1_T}(T),s\in \mathcal{G}_{1_T}(G)$. Therefore, $\mathcal{G}_{1_T}(G)$ is a left ideal of $\mathcal{G}_{1_T}(T)$.

Necessity: For a left ideal $\mathcal{G}_{1_T}(G)$ of $\mathcal{G}_{1_T}(T)$, $t\cdot s \in \mathcal{G}_{1_T}(G)$ for any $t\in T,s\in G$. Since $t\cdot s=ts-t-s$, $ts=t\cdot s+t+s$ and $[ts,ts',s']=t\cdot s+t+s-(t\cdot s'+t+s')+s'=t\cdot s-t\cdot s'+s$ where $t\in T,s,s'\in G$. Since $\mathcal{G}_{1_T}(G)$ is a left ideal of $\mathcal{G}_{1_T}(T)$, $t\cdot s \in \mathcal{G}_{1_T}(G),t\cdot s' \in \mathcal{G}_{1_T}(G)$ and $[ts,ts',s']\in G$. Therefore, $G$ is a left paragon of $T$.

Since $G\cong \tau^{1_T}_g(G)$ for all $g\in G$, the second claim follows directly from the first. Let $G$ be a left paragon of $T$, then it corresponds to a left ideals of $R(T)$, $\mathcal{G}_{1_T}(G)\oplus \{ 0 \}$, $\mathcal{G}_{1_T}(G)\oplus \mathbb{Z}(a,d)$ or $\mathcal{G}_{1_T}(G)\oplus d\mathbb{Z}$, where $a\in G, d\in \mathbb{Z}$. Since $G$ is finitely generated $\Longleftrightarrow$ $\mathcal{G}_{1_T}(G)$ is finitely generated $\Longleftrightarrow$ $\mathcal{G}_{1_T}(G)\oplus \{ 0 \}$, $\mathcal{G}_{1_T}(G)\oplus \mathbb{Z}(a,d)$ and $\mathcal{G}_{1_T}(G)\oplus d\mathbb{Z}$ is finitely generated, $T$ is a left PA Noetherian truss $\Longleftrightarrow$ $R(T)$ is a left Noetherian ring.
\end{proof}

\begin{corollary}\label{ff}
A unital ring $A$ is left Noetherian if and only if the truss $T(A)$ is a left {\rm PA} Noetherian truss.
\end{corollary}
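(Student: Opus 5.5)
The plan is to deduce Corollary \ref{ff} from Lemma \ref{fg}, applied to the unital truss $T=T(A)$. I will then identify $R(T(A))$ with a ring whose left Noetherianity is equivalent to that of $A$. By Lemma \ref{fg}, $T(A)$ is left PA Noetherian if and only if $R(T(A))$ is a left Noetherian ring, so it suffices to prove
$$R(T(A)) \text{ is left Noetherian} \iff A \text{ is left Noetherian}.$$

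The first step is to identify $R(T(A))$. Use the description recalled in the proof of Lemma \ref{fg} (from \cite[Remark 2.10]{mb}): $R(T)$ is the Dorroh extension of $\mathcal{G}_{1_T}(T)$, where the product is $t\cdot s=ts-t-s$. For $T=T(A)$, the retract $\mathcal{G}_{1}(T(A))$ is the group $A$ with zero element $1$. The translation $x\mapsto x-1$ turns this product into
$$(x-1)(y-1)=xy-x-y+1,$$
which agrees with $t\cdot s$ after shifting. Hence $\mathcal{G}_1(T(A))$ is isomorphic, as a non-unital ring, to $A$ itself, and so $R(T(A))\cong A\times\mathbb{Z}$. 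This matches the description $R(T)\cong G(T,0)\times\mathbb{Z}$ used in the paper's examples. I will verify that the isomorphism $A\oplus\mathbb{Z}\to A\times\mathbb{Z}$, $(a,n)\mapsto(a+n\cdot 1_A,n)$, is a ring isomorphism from the Dorroh extension onto the direct product. This is routine, because $A$ is already unital.

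The second step is the Noetherian comparison, which is standard. Left ideals of $A\times\mathbb{Z}$ are exactly the products $I\times J$ with $I$ a left ideal of $A$ and $J$ an ideal of $\mathbb{Z}$. Since $\mathbb{Z}$ is Noetherian, $A\times\mathbb{Z}$ is left Noetherian if and only if $A$ is. Combining this with Lemma \ref{fg} gives the corollary.

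The main obstacle is the identification $R(T(A))\cong A\times\mathbb{Z}$, and in particular keeping the unit conventions straight. The Dorroh extension of a ring that is already unital splits off the factor $\mathbb{Z}$ only after the shift described above. If that computation turns out messier than expected, the fallback is to argue directly. In that case I would note that $A$ is a quotient of $R(T(A))$ via the universal map $R(T(A))\to A$ induced by $T(A)\hookrightarrow A$, so Noetherianity of $R(T(A))$ passes to $A$. For the converse, I would use the fact that the kernel $\mathbb{Z}$ is a Noetherian module, so left Noetherianity is inherited by an extension of $A$ by $\mathbb{Z}$.
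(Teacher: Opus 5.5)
Your proposal is correct and follows the same route as the paper: identify $R(T(A))\cong A\times\mathbb{Z}$, use that $\mathbb{Z}$ is Noetherian so this product is left Noetherian exactly when $A$ is, and then invoke Lemma \ref{fg}. The paper simply asserts the identification, whereas your shift $x\mapsto x-1$ on $\mathcal{G}_1(T(A))$ and the explicit isomorphism $(a,n)\mapsto(a+n\cdot 1_A,n)$ from the Dorroh extension onto $A\times\mathbb{Z}$ actually verify it.
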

\begin{proof}
Since $\mathbb{Z}$ is left Noetherian, $A$ is a left Noetherian unital ring if and only if $R(T(A)) \cong G(T(A),e) \times \mathbb{Z} \cong G(T(A),0) \times \mathbb{Z} = A \times \mathbb{Z}$ is left Noetherian. By Lemma \ref{fg}, $A$ is a left Noetherian unital ring if and only if the truss $T(A)$ is a left PA Noetherian truss.
\end{proof}

From Definition \ref{proj}, Definition \ref{sqe} and Theorem \ref{vv}, dually we have the following.
\begin{definition}
A heap of $T$-modules $I$ is said to be injective if for any monomorphism $f:M\rightarrow N$ in $T\textbf{-HMod}$ and any morphism $g:M\rightarrow I$, there exists a morphism $h:N\rightarrow I$ such that $h\circ f= g$. The class of injective heaps of $T$-modules is denoted by $\mathcal{I}(TH)$.
\end{definition}

\begin{definition}
Let $M$ be a heap of $T$-modules. The (BP) injective dimension $(BP)id_{TH}(M)$ is defined as the smallest non-negative integer $n$ for which there exists an exact (BasePoint-Compatible) heaps of $T$-modules sequence $M \hookrightarrow E^{0} \to E^{1} \to \dots \twoheadrightarrow E^n $, where each $E^i$ is a injective heap of $T$-modules. If no such finite sequence exists, we set $(BP)id_{TH}(M) = \infty$.
\end{definition}

\begin{theorem}\label{dua}
Let $M$ be a heap of $T$-modules with finite BP injective dimension. Then we have:
$$ id_{TH}(M) \leq BPid_{TH}(M) = id_{R(T)}(G_{e_m}(M)).$$
\end{theorem}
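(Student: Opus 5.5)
The plan is to mirror the proof of Theorem \ref{vv}, dualizing projectives to injectives, in two steps. First I will establish an injective analogue of Theorem \ref{th1}: a heap of $T$-modules $E$ is injective if and only if $\mathcal{G}_{e}(E)$ is an injective $R(T)$-module, for some (equivalently every) $e\in E$. Then I will transport injective coresolutions back and forth along $\mathcal{G}$ and $\mathcal{H}$ using Proposition \ref{zs}.

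\emph{The injective analogue of Theorem \ref{th1}.}
\begin{itemize}
\item For necessity, take a monomorphism $\iota\colon A\to B$ of $R(T)$-modules and a map $\varphi\colon A\to \mathcal{G}_e(E)$. Apply $\mathcal{H}$, which preserves injectivity of the underlying maps and satisfies $\mathcal{H}\mathcal{G}_e(E)=E$. This gives a monomorphism $\mathcal{H}(\iota)$ and a morphism $\mathcal{H}(\varphi)\colon\mathcal{H}(A)\to E$ in $T\textbf{-HMod}$.
\item Injectivity of $E$ gives $\eta\colon\mathcal{H}(B)\to E$ with $\eta\circ\mathcal{H}(\iota)=\mathcal{H}(\varphi)$.
\item Set $\psi=\tau^{e}_{\eta(0_B)}\circ\eta$. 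Because $\eta(0_B)=\eta(\iota(0_A))=\varphi(0_A)=e$, the translation is the identity. Hence $\psi$ is $T$-linear for the pointed structures and satisfies $\psi\circ\iota=\varphi$.
\item For sufficiency, start from a monomorphism $f\colon M\to N$ of heaps and $g\colon M\to E$. If $M=\varnothing$, map $N$ constantly to any point of $E$; this is a morphism because constant maps preserve $\rhd$, since $t\rhd_x x = x$. Otherwise pick $m\in M$ and base $M,N,E$ at $m$, $f(m)$, $g(m)$. Then $f$ and $g$ become $R(T)$-linear, the module extension exists, and applying $\mathcal{H}$ gives the heap extension.
\end{itemize}

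\emph{Transport of coresolutions.} With this in hand, a BasePoint-Compatible injective coresolution $M\hookrightarrow E^0\to\cdots\to E^n$ is sent by $\mathcal{G}$ (Proposition \ref{zs}(ii)) to an exact sequence $0\to\mathcal{G}_{e_m}(M)\to\mathcal{G}_{e_0}(E^0)\to\cdots\to\mathcal{G}_{e_n}(E^n)\to 0$ whose terms are injective $R(T)$-modules, so $id_{R(T)}\le BPid_{TH}$. Conversely, $\mathcal{H}$ turns an injective coresolution of $\mathcal{G}_{e_m}(M)$ of length $id_{R(T)}(\mathcal{G}_{e_m}(M))$ into a BasePoint-Compatible one of $\mathcal{H}\mathcal{G}_{e_m}(M)=M$ by injective heaps. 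Proposition \ref{zs}(i) makes the value independent of $e_m$. This gives the equality.

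The inequality $id_{TH}\le BPid_{TH}$ holds because every BasePoint-Compatible sequence is in particular exact, so the infimum defining $id_{TH}$ runs over a larger set.

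The main obstacle I expect is the end of the coresolution. The definition uses a surjection onto $E^n$, while the module side has $E^{n-1}\to E^n\to 0$. One has to check that surjectivity of the last heap map matches surjectivity of its translate $\mathcal{G}(f)$, which holds since translations are bijections. The empty-heap edge cases in the sufficiency step also need a separate remark.
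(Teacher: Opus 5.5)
Your proposal is correct and is essentially the paper's route. The paper gives no separate proof of Theorem \ref{dua}; it obtains it by dualizing Theorem \ref{vv}, which means combining an injective analogue of Theorem \ref{th1} with transporting coresolutions along $\mathcal{G}$ and $\mathcal{H}$ via Proposition \ref{zs}, and you carry this out in more detail than the paper does. One step should be made explicit: your sufficiency argument needs $\mathcal{G}(f)$ to be a monomorphism of $R(T)$-modules, so you must show that a monomorphism in $T\textbf{-HMod}$ is an injective map. This follows by testing $f$ against morphisms out of the singleton heap, which exist at every point precisely because $t\rhd_x x=x$.
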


\begin{definition}
A unital truss $T$ is called a (PA) Gorenstein truss if it satisfies the following conditions.
\begin{enumerate}
    \item[(i)] $T$ is a (PA) Noetherian truss;
    \item[(ii)] The injective dimension of $T$ as a left heap of $T$-modules satisfies $BPid_{TH}(_TT) < \infty$;
    \item[(iii)] The injective dimension of $T$ as a right heap of $T$-modules satisfies $BPid_{TH}(T_T) < \infty$.
\end{enumerate}
\end{definition}

\begin{remark}
For a non-unital truss $T$, we define it as a PA Gorenstein truss if its unitalization $T_u$ is a PA Gorenstein truss, which is equivalent to that the ring $R(T)_u$ is an Iwanaga-Gorenstein ring.
\end{remark}

\begin{theorem}\label{rt}
Let $T$ be a unital truss. Then $T$ is a {\rm PA} Gorenstein truss if and only if $R(T)$ is an Iwanaga-Gorenstein ring.
\end{theorem}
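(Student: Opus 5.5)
The plan is to unwind the definition of a PA Gorenstein truss into its three conditions and match each one with the corresponding condition defining an Iwanaga-Gorenstein ring. Recall that $R(T)$ is Iwanaga-Gorenstein when it is left and right Noetherian and $\mathrm{id}({}_{R(T)}R(T))<\infty$, $\mathrm{id}(R(T)_{R(T)})<\infty$.

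\textbf{Noetherian condition.} Lemma \ref{fg} states that $T$ is left PA Noetherian if and only if $R(T)$ is left Noetherian. The right-handed version follows by the same argument with right paragons, using the right paragon identity $[px,p'x,p']\in P$ and right ideals of the Dorroh extension. So condition (i) holds if and only if $R(T)$ is two-sided Noetherian.

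\textbf{Injective dimension conditions.} We apply Theorem \ref{dua} to $M={}_TT$ with base point $e_m=1_T$. This gives $BPid_{TH}({}_TT)=id_{R(T)}(\mathcal{G}_{1_T}(T))$, and this value is independent of the base point by Proposition \ref{zs}(i). The key step is to compare $\mathcal{G}_{1_T}(T)$ with the regular module. By \cite[Remark 2.10]{mb}, $R(T)\cong \mathcal{G}_{1_T}(T)\oplus\mathbb{Z}$ as abelian groups, with the multiplication recalled in Lemma \ref{fg}. One checks directly that $\mathcal{G}_{1_T}(T)\oplus\{0\}$ is a left ideal and that the projection $R(T)\to\mathbb{Z}$, $(t,m)\mapsto m$, is a ring map. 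This yields a short exact sequence of left $R(T)$-modules
\[
0\to \mathcal{G}_{1_T}(T)\to R(T)\to \mathbb{Z}\to 0 .
\]

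\textbf{Closing the argument (expected obstacle).} The difficulty is that finite injective dimension of $\mathcal{G}_{1_T}(T)$ and of $R(T)$ should imply each other. From the sequence, this equivalence holds provided $id_{R(T)}\mathbb{Z}<\infty$, which need not be automatic. I would first try to show that, for unital $T$, the sequence splits. The candidate splitting is $m\mapsto m\cdot(1_T,1)$ viewed through the element corresponding to the unit of $T$; I would then verify that the element $(0,1)-(\text{class of }1_T)$ is a central idempotent, so that $R(T)\cong \mathcal{G}_{1_T}(T)\times\mathbb{Z}$ as rings. This matches the decomposition $R(T(A))\cong A\times\mathbb{Z}$ used in Corollary \ref{ff}.

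Given such a ring product decomposition, $R(T)$ is Iwanaga-Gorenstein if and only if both factors are, and $\mathbb{Z}$ is Gorenstein of dimension one. The injective dimension of $\mathcal{G}_{1_T}(T)$ over $R(T)$ then equals its injective dimension over the factor ring $\mathcal{G}_{1_T}(T)$. Combining this with Theorem \ref{dua} and the right-module analogue (obtained from right heaps of $T$-modules in the same way), conditions (ii) and (iii) become exactly the finiteness of the self-injective dimensions of $R(T)$ on each side. Together with the Noetherian step, this gives both implications.
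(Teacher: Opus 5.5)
\textbf{There is a genuine gap, and it cannot be closed.}

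Your Noetherian step is fine. The sequence $0\to\mathcal{G}_{1_T}(T)\to R(T)\xrightarrow{\deg}\mathbb{Z}\to 0$ is also correct. You have found exactly the point the paper's own proof passes over: it reads off ``$BPid_{TH}({}_TT)<\infty$ iff $id\,R(T)<\infty$'' from Theorem \ref{dua}, silently identifying $\mathcal{G}_{1_T}(T)$ with ${}_{R(T)}R(T)$.

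Your repair does not work, however. In the coordinates of \cite[Remark 2.10]{mb}, $1_T$ is the zero of $\mathcal{G}_{1_T}(T)$. So $(0,1)=(1_T,1)$ is at once the image of $1_T$ and the identity of $R(T)$.
\begin{itemize}
\item Your candidate idempotent is therefore $0$.
\item The map $m\mapsto m(1_T,1)$ is just the unit map. It is not $R(T)$-linear when $R(T)$ acts on $\mathbb{Z}$ through $\deg$.
\item A left-module section must send $1\mapsto(0,1)-(u,0)$, where $u$ is a right identity of the ring $(\mathcal{G}_{1_T}(T),\,t\cdot s=ts-t-s)$.
\item A ring decomposition $R(T)\cong\mathcal{G}_{1_T}(T)\times\mathbb{Z}$ needs a two-sided identity. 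That exists precisely when $T\cong\mathcal{T}(A)$ for a unital ring $A$.
\end{itemize}
That is the situation of Corollary \ref{sj}, and only there does your argument go through.

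For general unital trusses the statement itself is false, so no argument can fill this step. Take $T=2\mathbb{Z}+1$, the odd integers, a unital subtruss of $\mathcal{T}(\mathbb{Z})$.
\begin{itemize}
\item The map $t\mapsto(t,1)$ identifies $R(T)$ with $S=\{(x,m)\in\mathbb{Z}^2 : x\equiv m \pmod 2\}\cong\mathbb{Z}[y]/(y^2-2y)$, where $y=(2,0)$.
\item $S$ is a commutative Noetherian hypersurface of Krull dimension one, hence Iwanaga-Gorenstein.
\item Here $\mathcal{G}_1(T)\cong 2\mathbb{Z}$ is a non-unital ring, so no splitting of the kind you want exists.
\item As a module, $\mathcal{G}_1(T)\cong 2\mathbb{Z}$ with $(x,m)$ acting as multiplication by $x$. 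That is, $\mathcal{G}_1(T)\cong S/(y-2)$.
\item Its resolution $\cdots\xrightarrow{y}S\xrightarrow{y-2}S\to S/(y-2)\to 0$ is periodic and minimal at the maximal ideal $(2,y)$. So its projective dimension is infinite.
\item Over an Iwanaga-Gorenstein ring, projective and injective dimension are finite simultaneously. So the injective dimension of $\mathcal{G}_1(T)$ is also infinite.
\item By Theorem \ref{dua}, used contrapositively, $BPid_{TH}({}_TT)=\infty$.
\end{itemize}
Thus $R(T)$ is Iwanaga-Gorenstein while $T$ is not PA Gorenstein. The theorem can only be proved under an extra hypothesis such as $T\cong\mathcal{T}(A)$, where your splitting via the idempotent $(u,0)$ is valid.
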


\begin{proof}
By Lemma \ref{fg}, a unital truss $T$ is a PA Noetherian truss if and only if $R(T)$ is a Noetherian ring. By Theorem \ref{dua}, $BPid_{TH}(M)= id_{R(T)}(G_{e_m}(M))$, which means $BPid_{TH}(T) < \infty$ if and only if $id_{R(T)}(G_{e_m}(M)) < \infty$. Therefore, $T$ is a Gorenstein truss if and only if $R(T)$ is an Iwanaga-Gorenstein ring.
\end{proof}

\begin{corollary}\label{sj}
$R$ is an Iwanaga-Gorenstein unital ring if and only if $T(R)$ is a {\rm PA} Gorenstein truss.
\end{corollary}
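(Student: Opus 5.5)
The corollary will follow by combining Theorem \ref{rt} with an identification of $R(T(R))$ for a unital ring $R$. Write $T(R)$ for the truss whose heap operation is $[a,b,c]=a-b+c$ and whose multiplication is that of $R$. Then $T(R)$ is unital, with $1_{T(R)}=1_R$. Theorem \ref{rt} therefore gives that $T(R)$ is a PA Gorenstein truss if and only if $R(T(R))$ is an Iwanaga-Gorenstein ring. It remains to show that $R(T(R))$ is Iwanaga-Gorenstein if and only if $R$ is.

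As in the proof of Corollary \ref{ff}, we use the identification $R(T(R))\cong G(T(R),0)\times\mathbb{Z}=R\times\mathbb{Z}$ as a product of rings. Retracting at $0$ gives back the ring $R$, and the Dorroh-type description from \cite[Remark 2.10]{mb} splits as a direct product because $R$ is already unital. Concretely, I would verify that the idempotent $(1_R,0)$, which corresponds to the unit of $T$, is central and splits $R(T(R))$ as $R\times\mathbb{Z}$. This check is routine from the multiplication formula.

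Next I use standard facts about finite products of rings. A product $A\times B$ is left and right Noetherian if and only if both $A$ and $B$ are. Every module over $A\times B$ decomposes as $M_A\oplus M_B$, so the self-injective dimension satisfies $\mathrm{id}_{A\times B}(A\times B)=\max\{\mathrm{id}_A A,\ \mathrm{id}_B B\}$, and the same holds on the right. Since $\mathbb{Z}$ is Noetherian with $\mathrm{id}_{\mathbb{Z}}\mathbb{Z}=1$, it is Iwanaga-Gorenstein. Hence $R\times\mathbb{Z}$ is Iwanaga-Gorenstein exactly when $R$ is, which completes the chain of equivalences.

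The only nontrivial point is the ring isomorphism $R(T(R))\cong R\times\mathbb{Z}$. The paper uses it in Corollary \ref{ff} and in its examples, so I would cite that; the remaining steps are formal.
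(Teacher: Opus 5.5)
The gap is in your first step. You obtain ``$T(R)$ is PA Gorenstein $\iff$ $R(T(R))$ is Iwanaga--Gorenstein'' by quoting Theorem~\ref{rt}. That theorem is not valid for general unital trusses, so quoting it does not establish the step, even though its conclusion happens to be true for $T(R)$. By Theorem~\ref{dua}, condition (ii) in the definition of a PA Gorenstein truss says $\mathrm{id}_{R(T)}\mathcal{G}_{1_T}(T)<\infty$. But $\mathcal{G}_{1_T}(T)$ is not the regular module $R(T)$; it is the augmentation ideal $\ker(R(T)\to\mathbb{Z})$, and the proof of Theorem~\ref{rt} tacitly identifies the two. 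For a counterexample, let $T=\mathbb{Z}$ with $[a,b,c]=a-b+c$ and $a\cdot b=a+b$, a unital truss with unit $0$. The formula $(t,m)(s,n)=(ts+(n-1)t+(m-1)s,mn)$ becomes $(nt+ms,mn)$, so $R(T)\cong\mathbb{Z}[x]/(x^2)$, which is Iwanaga--Gorenstein. However, $t\triangleright_0 s=[t+s,t,0]=s$, so $\mathcal{G}_0(T)\cong\mathbb{Z}[x]/(x)$. The periodic free resolution given by multiplication by $x$ yields $\operatorname{Ext}^i_{R(T)}(\mathbb{Z}[x]/(x),\mathbb{Z}[x]/(x))\cong\mathbb{Z}$ for every $i$. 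Hence $BPid_{TH}({}_TT)=\infty$, and $T$ is not PA Gorenstein.

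What you need is the computation that makes the case $T=T(R)$ work, and this is what the paper's proof supplies without using Theorem~\ref{rt}. Write $\iota\colon T(R)\to R(T(R))\cong R\times\mathbb{Z}$, $t\mapsto(t,1)$, as in your identification. Since $t\triangleright_0 s=ts$, the element $(r,n)=\iota(r)+(n-1)\iota(0)$ acts on $\mathcal{G}_0(T(R))$ by $s\mapsto rs$. So $\mathcal{G}_0(T(R))\cong R\times 0$, a ring factor of $R\times\mathbb{Z}$. Theorem~\ref{dua} then gives $BPid_{TH}({}_TT)=\mathrm{id}_{R\times\mathbb{Z}}(R\times 0)=\mathrm{id}_R R$, and likewise on the right. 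Combined with Corollary~\ref{ff} for the Noetherian condition, this proves the corollary. The paper phrases this as $\mathrm{id}_R(M)=BPid_{T(R)H}(\mathcal{H}(M))$ applied to $M=R$, noting that $\mathcal{H}(R)$ is $T(R)$ as a heap of $T(R)$-modules. Your ring isomorphism $R(T(R))\cong R\times\mathbb{Z}$ and your facts about finite products of rings are correct, so adding this computation repairs your argument. The passage through the self-injective dimension of $R\times\mathbb{Z}$ then becomes unnecessary. One small slip: $1_T$ maps to $(1_R,1)$, the identity of $R\times\mathbb{Z}$, whereas the central idempotent $(1_R,0)$ is $\iota(1_R)-\iota(0_R)$.
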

\begin{proof}
By Corollary \ref{ff}, a unital ring $R$ is left Noetherian if and only if the truss $T(R)$ is a left PA Noetherian truss. Since the functor $\mathcal{H}$ preserves the underlying elements of objects and maps exact sequences of $R$-modules to basepoint-compatible sequences of heaps of $T(R)$-modules, $id_{R}(M) = BPid_{T(R)H}(\mathcal{H}(M))$, which means $id_{R}(M) < \infty$ if and only if $BPid_{T(R)H}(\mathcal{H}(M)) < \infty$. Therefore, $R$ is an Iwanaga-Gorenstein unital ring if and only if $T(R)$ is a PA Gorenstein truss.
\end{proof}

\paragraph{Competing Interests}
The authors declare that they have no conflicts of interest to this work.

\paragraph{Data Availability}
Data sharing not applicable to this article as no datasets were generated or analysed during the current study.

\textbf{Yongduo Wang}\\
Department of Applied Mathematics, Lanzhou University of Technology, 730050 Lanzhou, Gansu, P. R. China\\
E-mail: \textsf{ydwang@lut.edu.cn}\\[0.3cm]
\textbf{Chenyu Wang}\\
Department of Applied Mathematics, Lanzhou University of Technology, 730050 Lanzhou, Gansu, P. R. China\\
E-mail: \textsf{3217502669@QQ.com}\\[0.3cm]
\textbf{Jian He}\\
Department of Applied Mathematics, Lanzhou University of Technology, 730050 Lanzhou, Gansu, P. R. China\\
E-mail: \textsf{jianhe30@163.com}\\[0.3cm]
\textbf{Dejun Wu}\\
Department of Applied Mathematics, Lanzhou University of Technology, 730050 Lanzhou, Gansu, P. R. China\\
E-mail: \textsf{wudj@lut.edu.cn}\\[0.3cm]
\end{document}